\title[Automorphisms of $\CC^3$ Commuting with a $\CC^+$-Action]
{Automorphisms of $\CC^3$ Commuting with a 
$\CC^+$-Action}
\author{Immanuel Stampfli}
\address{Jacobs University Bremen gGmbH, School of Engineering and Science, Department of Mathematics, Campus Ring 1, 28759 Bremen, Germany}
\email{immanuel.e.stampfli@gmail.com}
\thanks{The author was supported by the
Swiss National Science Foundation (Schweizerischer Nationalfonds),
grants ``Automorphisms of Affine $n$-Space" 137679
and ``Automorphisms of affine algebraic varieties" 148627}
\theoremstyle{plain}
\newtheorem{thm}{Theorem}[subsection]
\newtheorem{prop}[thm]{Proposition}
\newtheorem{lem}[thm]{Lemma}
\newtheorem{cor}[thm]{Corollary}
\theoremstyle{definition}
\newtheorem{defn}{Definition}[subsection]
\theoremstyle{remark}
\newtheorem{rem}[defn]{Remark}
\newcommand{\name}[1]{#1}
\renewcommand{\AA}{{\mathbb C}}
\newcommand{\PP}{{\mathbb P}}
\newcommand{\AAone}{{\mathbb C}}
\DeclareMathOperator{\GGa}{\CC^+}
\DeclareMathOperator{\Cent}{Cent}
\DeclareMathOperator{\Exp}{Exp}
\DeclareMathOperator{\im}{im}
\DeclareMathOperator{\rank}{rank}
\DeclareMathOperator{\ZZ}{\mathbb Z}
\DeclareMathOperator{\NN}{\mathbb N}
\DeclareMathOperator{\CC}{{\mathbb C}}
\newcommand{\g}{\mathbf g}
\newcommand{\h}{\mathbf h}
\newcommand{\bu}{\mathbf u}
\newcommand{\f}{\mathbf f}
\newcommand{\e}{\mathbf e}
\newcommand{\br}{\mathbf r}
\newcommand{\bt}{\mathbf t}
\newcommand{\w}{\mathbf w}
\newcommand{\bv}{\mathbf v}
\newcommand{\bid}{\mathbf{id}}
\renewcommand{\div}{\mathrm{div}}
\DeclareMathOperator{\pr}{pr}
\newcommand{\ud}{\,\mathrm{d}}
\DeclareMathOperator{\red}{\textrm{red}}
\newcommand{\OO}{\mathcal O}
\DeclareMathOperator{\Spec}{Spec}
\DeclareMathOperator{\Aut}{Aut}
\DeclareMathOperator{\End}{End}
\DeclareMathOperator{\Iner}{Iner}
\DeclareMathOperator{\id}{id}
\DeclareMathOperator{\Span}{span}
\DeclareMathOperator{\ad}{ad}
\newcommand{\aquot}{/ \! \! /}
\begin{document}

\begin{abstract}
	Let $\rho$ be an algebraic action of the additive group $\CC^+$
	on the three-dimensional affine space $\CC^3$.
	We describe the group $\Cent(\rho)$ of polynomial automorphisms of $\CC^3$
	that commute with $\rho$. A particular emphasis lies in the
	description of the automorphisms in $\Cent(\rho)$ coming from
	algebraic $\CC^+$-actions. As an application we prove
	that the automorphisms in $\Cent(\rho)$ that are
	the identity on the algebraic quotient of $\rho$ form a characteristic
	subgroup of $\Cent(\rho)$.
\end{abstract}

\maketitle


\section{\texorpdfstring{Introduction}{Introduction}}
Let $X$ be an affine algebraic variety. 
A classification of the algebraic $\CC^+$-actions on 
$X$ up to conjugacy in the automorphism group $\Aut(X)$ is only known for a few varieties $X$. For example, when $X = \CC^2$ we have a classification: 
every $\CC^+$-action is a modified translation up to conjugacy, 
i.e. an action of the form $t \cdot (x, y) = (x + td(y), y)$ for a suitable polynomial $d$
(see \cite{Re1968Operations-du-grou}).
In contrast to the two-dimensional case, there is no classification known for the 
$\CC^+$-actions on $\CC^3$. As a first step towards a classification, we
study the centralizer of a $\CC^+$-action in $\Aut(\CC^3)$, i.e. the 
group of automorphisms that commute with the $\CC^+$-action.


An automorphism $\bu \in \Aut(\CC^n)$ is called 
\emph{unipotent} if there exists on $\CC^n$ an algebraic $\CC^+$-action
$\rho \colon \CC^+ \times \CC^n \to \CC^n$ such that 
\[
	\bu(x_1, \ldots, x_n) = \rho(1, x_1, \ldots, x_n) \quad \textrm{for all 
	$(x_1, \ldots, x_n) \in \CC^n$} \, .
\]
In fact, the $\CC^+$-action $\rho$ is uniquely determined 
by the unipotent automorphism 
$\rho_1 = ( (x_1, \ldots, x_n) \mapsto \rho(1, x_1, \ldots, x_n) )$.
Thus $\rho \mapsto \rho_1$ is a bijection between algebraic $\CC^+$-actions
on $\CC^n$ and unipotent automorphisms of $\CC^n$. Moreover,
the centralizer of $\rho$ is the same as the centralizer of $\rho_1$ and thus 
we are reduced to the study of the centralizer of a unipotent automorphism.

For us it will be crucial
that there is a different characterization of unipotent automorphisms.
Namely, there is a bijective correspondence of unipotent automorphisms
of $\CC^n$ and locally nilpotent derivations of the 
polynomial ring $\OO(\CC^n) = \CC[x_1, \ldots, x_n]$, 
given by the exponential $\Exp$
\begin{eqnarray*}
	\{ \, \textrm{locally nilpotent derivations of $\OO(\CC^n)$} \, \} 
	& \hspace{-5pt} \stackrel{1:1}{\longleftrightarrow} \hspace{-5pt} & 
	\{ \, \textrm{unipotent automorphisms of $\CC^n$} \, \} \\
	D & \hspace{-5pt}  \longmapsto \hspace{-5pt}  & \Exp(D)
\end{eqnarray*}
compare \cite[sec. 1.5]{Fr2006Algebraic-theory-o}.

In dimension $n=2$, \name{Shmuel Friedland} and \name{John Milnor} proved
that every automorphism of $\AA^2$ is conjugate 
to a composition of so-called generalized \name{H\'enon} maps 
or to a triangular automorphism, i.e. an automorphism
of the form $(x, y) \mapsto (ax + b(y), cy + e)$ where 
$a, c \in \CC^\ast$, $b \in \CC[y]$ and $e \in \CC$ 
(cf. \cite[Theorem~2.6]{FrMi1989Dynamical-properti}). In the first case,
\name{St\'ephane Lamy} showed that the centralizer of such an automorphism
is isomorphic to a semi-direct product of $\ZZ$ with a finite cyclic group $\ZZ_q$
(cf. \cite[Proposition~4.8]{La2001Lalternative-de-Ti}). In the second case,
assuming in addition that the automorphism is unipotent, it has
the form $\bu(x, y) = (x + d(y), y)$ for suitable coordinates $(x, y)$ of 
$\CC^2$. One can see 
that the centralizer $\Cent(\bu)$ consists of triangular automorphisms and that it
fits into the following split short exact sequence 
\begin{equation}
	\label{2dim.eq}
	1 \to \{ \, (x, y) \mapsto (x + b(y), y) \ | \ b \in \CC[y] \, \}
	\hookrightarrow \Cent(\bu) 
	\stackrel{p}{\twoheadrightarrow} \Aut(\AAone, \div(d)) \to 1
\end{equation}
where $\Aut(\AAone, \div(d))$ denotes the automorphisms of $\AAone$
that preserve the principal divisor $\div(d) \subseteq \CC$
and $p$ sends $((x, y) \mapsto (ax +b(y), cy + e))$ to
$(y \mapsto cy + e)$.

In dimension $n=3$, \name{Cinzia Bisi} proved the following for a regular
automorphism $\f$ (i.e. an automorphism such that the indeterminacy sets of
the map and its inverse seen as birational maps of $\PP^3$ do not intersect):
If $\g$ is any automorphism that commutes with $\f$, then
$\g^m = \f^k$ for certain integers $k, m$
(cf. \cite[Main~Theorem~1.1]{Bi2008On-commuting-polyn}). 
As a counterpart to the regular automorphisms, one can regard
the unipotent automorphisms (a regular automorphism satisfies 
$\deg(\f^k) = \deg(\f)^k$ for all integers $k \geq 0$ and thus can not be unipotent). 
The work of \name{David Finston} and \name{Sebastian Walcher} 
\cite{FiWa1997Centralizers-of-lo} can be seen 
as a first step in the study of the centralizer of a unipotent automorphism.
They explore the centralizer of a triangulable (locally nilpotent) derivation inside 
the algebra of all derivations of $\OO(\CC^3)$.

In this article we study the structure of the centralizer of a unipotent automorphism 
$\bu \in \Aut(\CC^3)$ with the aid of an exact sequence similar to~\eqref{2dim.eq}
and give some applications of this study.

\section{\texorpdfstring{Statement of the main results}
	    {Statement of the main results}}

Let $\bu \in \Aut(\CC^3)$ be a unipotent automorphism $\neq \bid$
and let $D$ be the locally nilpotent derivation of $\OO(\CC^3)$
such that $\bu = \Exp(D)$.
Denote by $\OO(\CC^3)^\bu$ the functions that are invariant under the 
$\CC^+$-action induced by $\bu$. They satisfy 
$\OO(\CC^3)^\bu = \ker D \subseteq \OO(\CC^3)$.
Denote by $\AA^3 \aquot \bu$ the algebraic quotient of $\CC^3$ by the 
$\CC^+$-action induced by $\bu$, i.e. 
$\CC^3 \aquot \bu = \Spec(\OO(\CC^3)^\bu)$.
By \name{Miyanishi}'s Theorem (cf. \cite[Theorem~5.1]{Fr2006Algebraic-theory-o}),
this algebraic quotient is isomorphic to $\AA^2$. 

We call the ideal $\im D \cap \ker D$ of $\ker D$ 
the \emph{plinth ideal} of $D$.
By \cite[Theorem~1]{DaKa2009A-note-on-locally-} this ideal
is principal. We fix some generator $a$ and call the principal divisor
\[
	\Gamma = \div(a) \subseteq \AA^3 \aquot \bu = \CC^2
\]
the \emph{plinth divisor} of $\bu$ (respectively of $D$). 
We get a homomorphism of the centralizer
$\Cent(\bu)$ into the group $\Aut(\CC^3 \aquot \bu, \Gamma)$ 
of automorphisms of the algebraic quotient 
$\CC^3 \aquot \bu$ that preserve the plinth divisor $\Gamma$
\[
	p \colon \Cent(\bu) \to \Aut(\CC^3 \aquot \bu, \Gamma)
\]
(see sec.~\ref{RelativeAuto.sec} 
for an exact definition of $\Aut(\CC^3 \aquot \bu, \Gamma)$).
In contrast to the two-dimen\-sional case (see \eqref{2dim.eq}), the homomorphism
$p$ in the three-dimensional case is in general not surjective 
(see \cite[Proposition~1]{St2012A-note-on-Automorp}). 

If $f \in \ker D = \OO(\CC^3)^\bu$, then $f D$ is a locally 
nilpotent derivation of $\OO(\CC^3)$. 
We call $\Exp(fD)$ a \emph{modification} of $\bu = \Exp(D)$ and write
\[
	f \cdot \bu = \Exp(f D) \, .
\]
Note that the composition of two modifications is given by 
$(f \cdot \bu) \circ (f' \cdot \bu) = (f+f') \cdot \bu$.
Thus the modifications of $\bu$ form a group. We denote this group by
$\OO(\CC^3)^\bu \cdot \bu$.

For the sake of simplicity, we 
formulate in this section several results only for irreducible $\bu$
(see subsec.~\ref{BasicPropertiesOfAUnipotAuto.subsec} for the definition 
of ``irreducible"). 
However, in the course of this article we state and prove 
the results for general unipotent $\bu$.

\begin{thm}[cf. Theorem~\ref{firstfam.prop}]
	If $\bu$ is irreducible, then the kernel of $p$ is the subgroup
	of modifications of $\bu$:
	\[
		\ker p = \OO(\CC^3)^\bu \cdot \bu \, .
	\]
\end{thm}

The description of $\Cent(\bu)$ is special in the case when $\bu$ is a 
\emph{translation}, i.e. $\bu(x, y, z) = (x+1, y, z)$ for suitable coordinates $(x, y, z)$.
Note that $\bu$ is a translation if and only if its plinth divisor $\Gamma$ 
is empty.

\begin{prop}[cf. Proposition~\ref{modtranslation.prop}]
	The unipotent automorphism $\bu$ is a translation, if and only if 
	there is a split short exact sequence
	\[
		1 \to \OO(\AA^3)^{\bu} \cdot \bu 
		\hookrightarrow \Cent(\bu) \stackrel{p}{\to}
		\Aut(\AA^3 \aquot \bu) \to 1 \, .
	\]
\end{prop}

In order to formulate the next results, we recall briefly some notion and facts 
of the theory of ind-groups 
(see \cite[ch. IV]{Ku2002Kac-Moody-groups-t} for an introduction). 
A group $G$ is called an \emph{ind-group} if it is endowed with a filtration by
affine varieties $G_1 \subseteq G_2 \subseteq \ldots \,$, each one closed in the
next, such that $G = \bigcup_{i=1}^{\infty} G_i$ and such that the map
$G \times G \to G$, $(x, y) \mapsto x \cdot y^{-1}$ 
is a morphism of ind-varieties. 
We endow $G$ with the following topology: 
$X \subseteq G$ is closed if and only if $X \cap G_i$ is closed in 
$G_i$ for each $i$.
We call a subgroup $H$ of $G$ \emph{algebraic}, if it is a closed subset of
some $G_i$. An element $g \in G$ is called \emph{algebraic}, if it is contained in
some algebraic subgroup of $G$.

For example, $\Aut(\AA^n)$
is an ind-group with the filtration 
$\Aut(\AA^n)_1 \subseteq \Aut(\AA^n)_2 \subseteq \ldots \,$ 
where $\Aut(\AA^n)_i$ is the set of all automorphisms of degree $\leq i$ 
(see \cite{BaCoWr1982The-Jacobian-conje}). Then, $\Aut(\CC^2, \Gamma)$
is a closed subgroup of $\Aut(\CC^2)$ (cf. Proposition~\ref{curves.prop}). 
Examples of closed subgroups of $\Aut(\CC^3)$ are $\Cent(\bu)$ and
$\ker p$ (cf. Remark~\ref{indhomo.rem}). 
As a last example, the group of modifications
$\OO(\CC^3)^\bu \cdot \bu$ is closed in $\Aut(\CC^3)$. Moreover, 
the homomorphism
$(\OO(\CC^3)^\bu, +) \to \OO(\CC^3)^\bu \cdot \bu$, $f \mapsto f \cdot \bu$
is an isomorphism of ind-groups. 
In particular, the closure of $\langle \bu \rangle$ in 
$\Aut(\CC^3)$ is the group $\CC \cdot \bu$ and thus $\bu$ is an algebraic element 
of $\Aut(\CC^3)$.

\begin{thm}[cf. Theorem~\ref{main.thm1},
\ref{main.thm2} and Corollary~\ref{mainthm2.cor}] 
	\label{unipotgeneral.mtm}
	Assume that $\bu$ is not a translation. Then
	\begin{enumerate}[i)]
		\item All elements in $\Cent(\bu)$ are algebraic.
		\item The set of unipotent elements $\Cent(\bu)_u \subseteq \Cent(\bu)$
		is a closed normal subgroup. 
		\item There exists an algebraic
		subgroup $R \subseteq \Cent(\bu)$ consisting only 
		of semi-simple elements such that 
		$\Cent(\bu) \simeq \Cent(\bu)_u \rtimes R$ 
		(as ind-groups). In particular, the connected component of the
		identity in $R$ is a torus.
	\end{enumerate}
\end{thm}

The next result describes the group of unipotent elements $\Cent(\bu)_u$.
For this, we need the following terminology. 
We call the divisor $\Gamma = \sum_i n_i \Gamma_i$ in $\CC^2$ 
a \emph{fence}, if
$\Gamma_i \simeq \AAone$ for all $i$ and the $\Gamma_i$ are pairwise disjoint.

\begin{thm}[cf. Proposition~\ref{properties.prop} and 
Theorem~\ref{main.thm2}] Assume $\bu$ is irreducible and not a translation.
If the plinth divisor $\Gamma$ of $\bu$
\begin{enumerate}[i)]
	\item is not a fence, then $\Cent(\bu)_u = \ker p$.
	\item is a fence, then 
	$\Cent(\bu)_u \simeq \ker p \rtimes \Iner(\CC^3 \aquot \bu, \Gamma)_u$ 
	(as ind-groups) where
	$\Iner(\CC^3 \aquot \bu, \Gamma)_u$
	is the subgroup of unipotent automorphisms in 
	$\Aut(\CC^3 \aquot \bu, \Gamma)$ that are the identity on 
	$\Gamma$ (cf. sec.~\ref{RelativeAuto.sec} for an exact definition).
	Moreover, there exists an irreducible $\e \in \Cent(\bu)_u$ such
	that $p$ induces an isomorphism of ind-groups
	 $\OO(\AA^3)^{\langle \bu, \e \rangle} \cdot \e \simeq 
	 \Iner(\CC^3 \aquot \bu, \Gamma)_u$
	where $\OO(\CC^3)^{\langle \bu, \e \rangle}$ 
	is the subring of $\e$-invariant polynomials inside $\OO(\CC^3)^\bu$.
\end{enumerate}
\end{thm}

Let us give some explanation of the last result.
If $\Gamma$ is not a fence, then the underlying variety 
cannot be a union of orbits of a non-trivial $\GGa$-action on $\AA^2$. 
Hence, all unipotent automorphisms
of $\Cent(\bu)$ induce the identity on the algebraic quotient
and thus the first part of the result follows.
So let us assume that $\Gamma$ is a non-empty fence.
There exists a proper non-empty open subset $U \subseteq \AAone$ such that
the algebraic quotient $\pi \colon \CC^3 \to \CC^3 \aquot \bu$ fits into 
the following commutative diagram
\[
	\xymatrix@=1.5pc{
		\AA^3 \ar[d]_-{\pi} \ar@{}[r]|-{\supseteq} & 
		\AA^3 \setminus \pi^{-1}(\Gamma) \ar@{->>}[d] \ar@{}[r]|-{\simeq} &
		(U \times \AAone) \times \AAone \ar@{->>}[d]^-{\pr}  \\
		\AA^2 \ar@{}[r]|-{\supseteq} & 
		\AA^2 \setminus \Gamma \ar@{}[r]|-{\simeq} &
		U \times \AAone
	}
\]
where $\pr$ denotes the projection onto the first two factors 
(see Proposition~\ref{genAMS.prop} and 
\cite[Principle~11]{Fr2006Algebraic-theory-o}). 
There exist coordinates $(u, v, w)$ of $(U \times \AAone) \times \AAone$ such that
$\bu$ restricted to $(U \times \AAone) \times \AAone$ is given by 
$(u, v, w) \mapsto (u, v, w+1)$. The automorphism 
$(u, v, w) \mapsto (u, v+1, w)$ of $(U \times \AAone) \times \AAone$ extends to an
irreducible unipotent automorphism $\e$ on $\AA^3$ 
that commutes with $\bu$
(see subsec.~\ref{The_second_subgroup.subsec}). Thus, $\Cent(\bu)_u$ contains
$\OO(\AA^3)^{\langle \bu, \e \rangle} \cdot \e$ beside 
$\OO(\AA^3)^\bu \cdot \bu$. It is not hard to prove that $p$ induces 
an isomorphism 
$\OO(\AA^3)^{\langle \bu, \e \rangle} \cdot \e \simeq 
\Iner(\CC^3 \aquot \bu, \Gamma)_u$.
The difficulty lies in proving that 
$\Cent(\bu)_u$ is a group which is generated by 
$\OO(\AA^3)^{\langle \bu, \e \rangle} \cdot \e$ 
and $\OO(\AA^3)^\bu \cdot \bu$.

\subsection{Applications}

We present two applications of the structure theorems above.
The first one concerns abstract group automorphisms 
$\Cent(\bu) \to \Cent(\bu)$.

\begin{prop}[cf. Proposition~\ref{char.prop}]
		If $\bu$ is not a translation, then
		the subgroup
		$\ker p \subseteq \Cent(\bu)$ is
		characteristic, i.e. it is invariant  under all abstract
		group automorphisms of $\Cent(\bu)$.
\end{prop}

The second application concerns the plinth divisor $\Gamma$.
The underlying variety $\Gamma_{\red}$ 
has the following geometric description: The complement of $\Gamma_{\red}$
is the maximal open subset of $\AA^3 \aquot \bu$ 
such that the algebraic
quotient $\pi \colon \AA^3 \to \AA^3 \aquot \bu$ is a locally trivial principal
$\GGa$-bundle over it (see \cite[Proposition~5.4]{DuKr2014Invariants-and-Sep}).
So far - to the author's knowledge - 
there is no geometric description of the \emph{scheme} $\Gamma$.
But in the case when $\Gamma$ is a non-empty fence
and $\bu$ is irreducible we can give one.

\begin{prop}[cf. Proposition~\ref{applicationfence.prop}]
	If $\bu$ is irreducible and $\Gamma$ is a non-empty fence, 
	then $\Gamma$ 
	is the largest closed subscheme of $\AA^3 \aquot \bu$ 
	fixed by $\Cent(\bu)_u$.
\end{prop}

\section{\texorpdfstring{Automorphisms of $\AA^2$ that preserve a divisor}
	    {Automorphisms of A2 that preserve a divisor}}

\label{RelativeAuto.sec}

Let $\Gamma \subseteq \CC^2$ be an effective divisor.
Then there exists $a \in \OO(\CC^2)$ 
such that $\Gamma = \div(a)$ ($a$ is uniquely determined up to 
elements of $\CC^\ast$).
We denote by $\Aut(\AA^2, \Gamma)$ the subgroup of all 
$\g \in \Aut(\AA^2)$ such that the comorphism 
$\g^\ast \colon \OO(\CC^2) \to \OO(\CC^2)$
sends the principal ideal $(a)$ onto itself.
Moreover, 
we denote by $\Iner(\AA^2, \Gamma)$ the subgroup of all 
$\g \in \Aut(\AA^2, \Gamma)$ such that $\g$ is the identity on the divisor 
$\Gamma$, i.e. we have a commutative diagram
\[
	\xymatrix{
		\OO(\CC^2) \ar@{->>}[d] \ar[r]^{\g^\ast} & \OO(\CC^2) 
		\ar@{->>}[d] \\
		\OO(\CC^2) / (a)  \ar[r]^{\id} & \OO(\CC^2) / (a) \, .
	}
\]
Clearly, we get an exact sequence
\[
	1 \to \Iner(\AA^2, \Gamma) \hookrightarrow \Aut(\AA^2, \Gamma) 
	\rightarrow \Aut(\Gamma)
\]
The next result uses heavily the main 
result in \cite{BlSt2013Automorphisms-of-t}.

\begin{prop} $\textrm{ }$
	\label{curves.prop}
	Let $\Gamma$ be a non-trivial effective divisor of $\AA^2$. Then
	\begin{enumerate}[i)]

		\item The subgroup $\Aut(\AA^2, \Gamma) \subseteq \Aut(\AA^2)$ 
		is closed and all elements of $\Aut(\AA^2, \Gamma)$ are algebraic.
		
		\item The following statements are equivalent
		\begin{enumerate}[a)]
			\item $\Gamma$ is a fence
			\item $\Aut(\AA^2, \Gamma)$ contains unipotent automorphisms
				$\neq \bid$.
			\item $\Iner(\AA^2, \Gamma) \neq \{ \bid \}$
			\item $\Aut(\AA^2, \Gamma)$ is not an algebraic group
		\end{enumerate}
	\end{enumerate}
\end{prop}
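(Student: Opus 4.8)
The plan is to reduce everything to the structure theory of plane curves developed in \cite{BlSt2013Automorphisms-of-t}, which describes the automorphisms of $\AA^2$ preserving a given curve. I would first dispose of part i). The condition that $\g \in \Aut(\AA^2)$ satisfies $\g(\Gamma) = \Gamma$ (scheme-theoretically) can be phrased as: the pullback $\g^\ast f$ of a defining polynomial $f$ of $\Gamma$ is, up to a nonzero scalar, equal to $f$ again. Equivalently, $\g$ preserves the principal ideal $(f)$. This is a Zariski-closed condition on the coefficients of $\g$ inside each $\Aut(\AA^2)_i$, because requiring $\g^\ast f$ and $f$ to be proportional amounts to the vanishing of all $2\times 2$ minors of the coefficient matrix; hence $\Aut(\AA^2,\Gamma)$ is closed. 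For algebraicity of each element, I would invoke \cite{BlSt2013Automorphisms-of-t}: the reduced curve $\Gamma_{\red}$ is a plane curve, and the main theorem there shows that $\Aut(\AA^2,\Gamma_{\red})$ either stabilizes the curve in a way that forces every element into an algebraic group (e.g.\ when the curve is not a disjoint union of lines), or — in the remaining cases — the automorphisms preserving the curve still have the property that the cyclic group they generate has algebraic closure, because the induced action on a suitable completion is by automorphisms of a projective surface fixing a divisor. Either way one concludes each $\g$ is algebraic; this is the content one extracts from the cited paper.

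For part ii) the strategy is to prove the cycle of implications $a)\Rightarrow b)\Rightarrow c)\Rightarrow d)\Rightarrow a)$ (or a convenient subset of these together with the obvious reverse arrows). The implication $a)\Rightarrow b)$ is constructive: if $\Gamma=\sum_i n_i\Gamma_i$ with the $\Gamma_i\simeq\AAone$ pairwise disjoint, then by the Abhyankar--Moh--Suzuki theorem one can choose coordinates $(x,y)$ in which one component is $\{x=0\}$; the $\CC^+$-action $t\cdot(x,y)=(x,y+tx^N)$ for large $N$ fixes that line to high order, and by iterating over the finitely many components (or, better, by choosing coordinates simultaneously adapted to the disjoint union of lines, which is possible precisely because they are disjoint coordinate-type curves) one produces a nontrivial unipotent automorphism preserving $\Gamma$ scheme-theoretically. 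The implication $b)\Rightarrow c)$ follows because a nontrivial unipotent $\bu=\Exp(D)$ preserving $\Gamma$ acts on the finite-dimensional space $\OO(\Gamma)$ unipotently, hence a suitable power $\bu^{k}$ acts trivially on $\Gamma$ while still being nontrivial on $\AA^2$ (a nontrivial $\CC^+$-action has no nontrivial finite subgroups, so $\bu^k\neq\bid$), giving a nontrivial element of $\Iner(\AA^2,\Gamma)$. The implication $c)\Rightarrow d)$: if $\Iner(\AA^2,\Gamma)$ is nontrivial, pick $\bid\neq\h$ in it; then $\h^{\mathbb Z}$ cannot be bounded in degree, since a nontrivial automorphism of $\AA^2$ fixing a curve pointwise and of finite order would have to be linearizable of finite order fixing a curve, which is impossible, so $\Aut(\AA^2,\Gamma)$ contains an element of infinite order with unbounded degree sequence and therefore is not algebraic.

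The real obstacle is the implication $d)\Rightarrow a)$, equivalently $\lnot a)\Rightarrow \lnot d)$: if $\Gamma_{\red}$ is \emph{not} a disjoint union of lines, one must show $\Aut(\AA^2,\Gamma)$ is an algebraic group. This is exactly where the main result of \cite{BlSt2013Automorphisms-of-t} does the heavy lifting. That paper classifies, for a plane curve $C\subseteq\AA^2$, the group $\Aut(\AA^2,C)$, and shows that it fails to be algebraic only in the ``degenerate'' case where $C$ (after an automorphism) is a union of parallel lines or, more generally, admits a nontrivial $\CC^+$-action in its complement — which for us is precisely the fence case. So the argument is: apply their classification to $C=\Gamma_{\red}$; in all non-fence cases it hands us an algebraic group, hence (combined with part i), which gives closedness) $\Aut(\AA^2,\Gamma)\subseteq\Aut(\AA^2,\Gamma_{\red})$ is a closed subgroup of an algebraic group, hence algebraic. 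I expect the write-up to spend most of its length carefully matching our scheme-theoretic divisor conventions and our notion of ``fence'' with the curve-theoretic hypotheses in \cite{BlSt2013Automorphisms-of-t}, and checking the passage between $\Gamma$ and $\Gamma_{\red}$ at each step; the logical skeleton above is short, but that bookkeeping, plus the degenerate-case analysis for $b)\Rightarrow c)$ when some $n_i>1$, is where the care is needed.
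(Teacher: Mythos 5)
Two of your four implications in part ii) contain genuine errors. For $b)\Rightarrow c)$ you argue that a nontrivial unipotent $\bu=\Exp(D)$ preserving $\Gamma$ acts unipotently on "the finite-dimensional space $\OO(\Gamma)$", so some power acts trivially on $\Gamma$. But $\Gamma$ is a curve, so $\OO(\Gamma)$ is infinite-dimensional, and in characteristic $0$ no nontrivial unipotent automorphism has a power equal to the identity: take $\Gamma=\{x=0\}$ and $\bu(x,y)=(x,y+1)$, whose restriction to $\Gamma\simeq\AAone$ is a translation with no trivial power. The implication is still true, but by a different mechanism: since $\bu$ preserves $\div(a)$, the function $a$ is a semi-invariant of the induced $\CC^+$-action, and as $\CC^+$ has no nontrivial characters, $a\in\ker D$; then the modification $a\cdot\bu=\Exp(aD)$ is $\neq\bid$ and is congruent to the identity modulo $(a)$, hence lies in $\Iner(\AA^2,\Gamma)$. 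This is the argument the paper uses, and your proposal has no substitute for it.

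For $c)\Rightarrow d)$ you claim every $\bid\neq\h\in\Iner(\AA^2,\Gamma)$ has powers of unbounded degree. That is false: with $\Gamma=\div(x)$ the map $\h(x,y)=(x,y+x)$ lies in $\Iner(\AA^2,\Gamma)$ and all its powers $(x,y+nx)$ have degree $1$; your supporting reasoning also conflates "bounded degree sequence" with "finite order" (and a finite-order automorphism can fix a curve pointwise, e.g. $(x,y)\mapsto(x,-y)$). Non-algebraicity of $\Aut(\AA^2,\Gamma)$ cannot be read off from a single element this way. The paper instead shows that a nontrivial inertia element forces $\Gamma$ to be a fence: $\h$ is conjugate to a triangular automorphism (the consequence of \cite{BlSt2013Automorphisms-of-t} stated as Theorem~\ref{algcurve.thm}), hence preserves a trivial $\AAone$-fibration, and a fixed-point analysis on the fibers shows either every component of $\Gamma_{\red}$ is a fiber or $\Gamma_{\red}$ is a single section-like line; in both cases $\Gamma$ is a fence, and then non-algebraicity follows because $\Aut(\AA^2,\Gamma)$ contains the infinite-dimensional family of modifications $(x,y)\mapsto(x,y+h(x)a(x))$. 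Your part i), your $a)\Rightarrow b)$ (via simultaneous straightening of the disjoint lines, which is exactly Proposition~\ref{genAMS.prop} and does require the short AMS argument you only gesture at), and your $d)\Rightarrow a)$ via $\Aut(\AA^2,\Gamma)\subseteq\Aut(\AA^2,\Gamma_{\red})$ closed match the paper; but as written the chain of equivalences is broken at the two steps above.
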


For the proof of this proposition we recall some facts about $\Aut(\AA^2)$.
The next result is a direct consequence of 
\cite[Theorem~1]{BlSt2013Automorphisms-of-t}.

\begin{thm}
	\label{algcurve.thm}
	An automorphism of $\AA^2$ that preserves an algebraic curve is
	conjugate to a triangular automorphism. 
\end{thm}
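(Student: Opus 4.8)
The plan is to exhibit an automorphism preserving an algebraic curve as an \emph{algebraic} element of the ind-group $\Aut(\AA^2)$, and then use the Jung--van der Kulk description of $\Aut(\AA^2)$ as the amalgamated free product of its affine and triangular (de Jonqui\`eres) subgroups. The three facts driving the argument are: (a) if $\f\in\Aut(\AA^2)$ lies in an algebraic subgroup, then the closure of $\langle\f\rangle$ is algebraic, so $\f$ is an algebraic element; (b) an algebraic element of $\Aut(\AA^2)$ is conjugate into the affine or the triangular subgroup, because a generalized H\'enon map $\f$ has $\deg(\f^n)=(\deg\f)^n\to\infty$ and hence generates a non-algebraic (in fact unbounded-degree) subgroup, i.e.\ is not an algebraic element; and (c) over the algebraically closed field $\CC$ every affine automorphism $v\mapsto Av+b$ is conjugate, by a linear change of coordinates putting $A$ into upper triangular form, to a triangular automorphism. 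Together (a)--(c) reduce the theorem to showing that the stabilizer of the curve is an algebraic subgroup, which is where \cite[Theorem~1]{BlSt2013Automorphisms-of-t} enters.

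First I would reduce to an irreducible invariant curve. Given $\f\in\Aut(\AA^2)$ preserving a curve $C$, the map $\f$ permutes the finitely many irreducible components of $C$, so some power $\f^k$ ($k\ge 1$) fixes an irreducible component $C_0$. It is enough to prove that $\f^k$ is conjugate to a triangular automorphism: an automorphism one of whose powers is conjugate into the affine or triangular subgroup is itself conjugate into one of them (a power of a hyperbolic element of an amalgam is again hyperbolic), and then fact (c) concludes for $\f$. Next I would dispose of two exceptional curves. If $C_0\simeq\AAone$, then by the Abhyankar--Moh--Suzuki theorem $C_0$ is a coordinate line, and after a change of coordinates $C_0=V(y)$; since an automorphism preserves multiplicities, $\f^k$ has the form $(x,y)\mapsto(g,\lambda y)$ with $\lambda\in\CC^\ast$, and computing the Jacobian determinant of $(x,y)\mapsto(g,y)$ forces $g=a(y)x+b(y)$ with $a\in\CC^\ast$, so $\f^k$ is already triangular. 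If $C_0\simeq\AAone\setminus\{0\}$, I would invoke the classification of closed embeddings of $\AAone\setminus\{0\}$ into $\AA^2$ (each is equivalent to a curve $V(x^ny-1)$) and check, in the same way, that an automorphism preserving such a curve is triangular in the corresponding coordinates.

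In the remaining case $C_0$ is irreducible and isomorphic neither to $\AAone$ nor to $\AAone\setminus\{0\}$, and \cite[Theorem~1]{BlSt2013Automorphisms-of-t} applies to give that $\Aut(\AA^2,C_0)$ is an algebraic subgroup of $\Aut(\AA^2)$. By fact (a) the element $\f^k\in\Aut(\AA^2,C_0)$ is then an algebraic element, by fact (b) it is conjugate into the affine or triangular subgroup, and by fact (c) it is conjugate to a triangular automorphism; hence so is $\f$, as claimed.

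The main obstacle, if one wanted a self-contained argument, is precisely the content packaged into \cite[Theorem~1]{BlSt2013Automorphisms-of-t}: that preserving such a curve already forces the stabilizer to be algebraic — equivalently, that a generalized H\'enon map admits no invariant algebraic curve. Everything else is the bookkeeping described above, the only genuinely delicate points being the explicit treatment of the curves $\AAone$ and $\AAone\setminus\{0\}$ and the descent from $\f^k$ back to $\f$ via the amalgam structure.
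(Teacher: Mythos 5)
The paper offers no argument for this statement at all: it is stated as a \emph{direct consequence} of \cite[Theorem~1]{BlSt2013Automorphisms-of-t}, so any reconstruction necessarily leans on that citation. Your overall strategy --- invariant curve forces the automorphism to be an algebraic (elliptic) element, hence by the Jung--van der Kulk amalgam and Friedland--Milnor it is conjugate into the affine or triangular factor, and over $\CC$ affine automorphisms are triangularizable by a linear change of coordinates --- is the standard route, and most of your bookkeeping is fine: passing to a power $\f^k$ fixing an irreducible component and descending ellipticity from $\f^k$ to $\f$ via the action on the Bass--Serre tree is correct, as is the Abhyankar--Moh--Suzuki treatment of a component isomorphic to $\AAone$.

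The genuine gap is the case $C_0 \simeq \AAone \setminus \{0\}$. Your claim that every closed embedding of $\AAone\setminus\{0\}$ in $\AA^2$ is equivalent to $V(x^n y - 1)$ is false: already the curves $V(x^n y^m - 1)$ with $\gcd(n,m)=1$ and $n,m\geq 2$ are pairwise inequivalent and not of your form, and beyond this toric family there exist exotic closed embeddings of $\AAone\setminus\{0\}$; the actual classification (Cassou-Nogu\`es--Koras--Russell) is a deep and intricate result, nothing like the normal form you invoke. So this case is simply not handled by your argument. Moreover, the need for it comes from your guessed, weakened reading of \cite[Theorem~1]{BlSt2013Automorphisms-of-t} (algebraicity of the stabilizer only for curves not isomorphic to $\AAone$ or $\AAone\setminus\{0\}$). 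The way the present paper uses that theorem --- deriving the statement at hand from it directly, and deducing that $\Aut(\AA^2,\Gamma_{\red})$ is algebraic for every non-fence, possibly reducible, curve --- indicates that the citation already covers curves isomorphic to $\AAone\setminus\{0\}$, so the detour is unnecessary; but as written you have both weakened the input and filled the resulting hole with an incorrect classification, and that step would have to be removed or replaced (e.g.\ by quoting the full strength of the cited theorem, or by an argument that a generalized H\'enon map admits no invariant algebraic curve, which you correctly identify as the essential content) for the proof to stand.
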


In the next result we prove a slightly more general version of the
\name{Abhyankar-Moh-Suzuki}-Theorem which says that all closed
embeddings $\AAone \hookrightarrow \AA^2$ are equivalent up
to automorphisms of $\AA^2$
(see \cite{AbMo1975Embeddings-of-the-}).

\begin{prop}
	\label{genAMS.prop}
	Let $\Gamma$ be a fence in $\AA^2$ and let 
	$F \subseteq \CC$ be a closed 0-dimensional subscheme 
	such that $\Gamma \simeq F \times \CC$. Then there exists an automorphism
	of $\CC^2$ that maps $\Gamma$ onto $F \times \CC$ (scheme-theoretically).
\end{prop}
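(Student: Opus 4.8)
The plan is to reduce Proposition~\ref{genAMS.prop} to the classical Abhyankar--Moh--Suzuki theorem by treating the components of the fence one at a time and keeping track of the scheme structure via a ``defining polynomial'' for each multiple line.

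First I would write $\Gamma = \sum_i n_i \Gamma_i$ with the $\Gamma_i \simeq \AAone$ pairwise disjoint, and correspondingly $F = \sum_i n_i [c_i]$ with distinct points $c_i \in \CC$; the isomorphism $\Gamma \simeq F \times \CC$ forces a bijection $\Gamma_i \leftrightarrow \{c_i\} \times \CC$ compatible with the multiplicities. Choose a reduced defining polynomial $f_i \in \OO(\AA^2)$ for $\Gamma_i$, so that $\Gamma_i = V(f_i)$ with $f_i$ irreducible and the scheme $\Gamma$ is cut out by $\prod_i f_i^{n_i}$. By the ordinary Abhyankar--Moh--Suzuki theorem there is an automorphism $\alpha_1$ of $\AA^2$ with $\alpha_1(\Gamma_1) = \{c_1\} \times \CC = V(x - c_1)$ (after fixing coordinates $(x,y)$); moreover one can arrange $\alpha_1^\ast(x - c_1) = f_1$ up to a nonzero scalar, since a reduced defining polynomial of a coordinate line pulls back to a reduced defining polynomial of its preimage, and these agree up to $\CC^\ast$. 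The key point is that this already handles the scheme structure along $\Gamma_1$: the ideal $(f_1^{n_1})$ is carried to $((x-c_1)^{n_1})$.

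Next I would induct. Having normalized $\Gamma_1, \dots, \Gamma_{k}$ to the vertical lines $V(x - c_1), \dots, V(x - c_k)$, I need an automorphism fixing these lines (scheme-theoretically, with their multiplicities) and sending $\Gamma_{k+1}$ to $V(x - c_{k+1})$. The curve $\Gamma_{k+1}$ is disjoint from the vertical lines $V(x - c_j)$, $j \le k$; hence its defining polynomial $f_{k+1}$ is, modulo the product $\prod_{j\le k}(x-c_j)$, a unit in the appropriate localization, which by a standard argument (using that $\Gamma_{k+1} \cong \AAone$ is a coordinate line in the complement of finitely many vertical lines, or applying Theorem~\ref{algcurve.thm} to make everything triangular) lets me write down a triangular automorphism $(x,y) \mapsto (x, y + p(x))$ or $(x, y) \mapsto (q(y)x + \cdots, \cdots)$ moving $\Gamma_{k+1}$ onto a vertical line while fixing $V(x - c_j)$ for $j \le k$ together with their multiplicities. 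Composing all these automorphisms gives the desired $\alpha$ with $\alpha(\Gamma) = F \times \CC$ scheme-theoretically.

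The hard part will be the inductive step: extending the ordinary AMS statement (which is about a single reduced line) to an automorphism that simultaneously \emph{preserves} the already-normalized multiple vertical lines and respects the multiplicity $n_{k+1}$ on the new component. The delicate bookkeeping is that ``scheme-theoretic image equals $\Gamma$'' is an equality of ideals, not just of reduced subsets, so at each stage one must verify that the chosen automorphism pulls $(x - c_j)$ back to a scalar multiple of $f_j$ rather than merely to some polynomial with the same zero set; this is where I expect to invoke Theorem~\ref{algcurve.thm} to put the configuration in triangular form, after which the vertical lines are literally $\{x = c_j\}$ and the multiplicity statement becomes the transparent equality of the ideals $((x - c_j)^{n_j})$. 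Once triangular form is achieved the remaining computations are routine polynomial manipulations in $\CC[x][y]$.
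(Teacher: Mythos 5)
Your opening move coincides with the paper's: apply Abhyankar--Moh--Suzuki to a single component and work with the resulting coordinate $x$ (equivalently, the trivial $\AAone$-bundle having $\Gamma_1$ as a fibre), and your remark that the pullback of $x-c_1$ is $f_1$ up to a scalar is exactly how the scheme structure is handled (the paper just reduces to the reduced case). But the inductive step you yourself flag as ``the hard part'' is where the proposal breaks down, and it breaks down twice over. First, it is unnecessary: for any other component $\Gamma_i$, the restriction $x|_{\Gamma_i}$ is a polynomial map $\AAone \to \AAone$; if it were non-constant it would be surjective, forcing $\Gamma_i \cap \Gamma_1 \neq \varnothing$ and contradicting disjointness. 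Hence $x$ is constant on every component, i.e.\ after the one application of AMS the whole fence already consists of vertical lines. This one-line observation is the entire proof in the paper; there is no second automorphism to construct, no previously straightened lines to preserve, and Theorem~\ref{algcurve.thm} (which is about automorphisms preserving a curve, not about normalizing a configuration of curves) is not needed.

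Second, the inductive step as you formulate it would actually fail from the third component on. Suppose $V(x-c_1),\dots,V(x-c_k)$ with $k \geq 2$ are preserved scheme-theoretically by an automorphism $\varphi$; then $\varphi^\ast(x-c_j) = \lambda_j (x-c_j)$ for $j = 1,2$ forces $\lambda_1 = \lambda_2 = 1$ and $\varphi^\ast(x) = x$, so $\varphi$ preserves every fibre of $x$. Since $\Gamma_{k+1}$ is itself a vertical line $V(x-b)$ by the disjointness argument above, no automorphism fixing the first $k$ lines can move it to a prescribed $V(x-c_{k+1})$ unless $b = c_{k+1}$; the ``triangular automorphism'' you hope to write down does not exist. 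More generally, any automorphism carrying one union of at least two vertical lines onto another induces an affine map of the base, so the positions of the lines in $F$ can only be matched up to an affine transformation of $\AAone$ --- the hypothesis $\Gamma \simeq F \times \CC$ only pins down the multiplicities, and the proposition should be read (and is only ever used in the paper) as producing coordinates in which $\Gamma$ is some standard fence $F' \times \CC$, i.e.\ $a \in \CC[z]$. Your plan of hitting the given $F$ one prescribed line at a time is therefore both impossible in general and beside the point; the correct conclusion is that after the single AMS normalization nothing further needs to be done.
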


\begin{proof}
	Clearly, we can assume that $\Gamma \neq \varnothing$. 
	Moreover, we can easily reduce to the case, where $\Gamma$
	is a reduced scheme.
	Let $\Gamma_i$, $i \in I$ be the irreducible components of $\Gamma$.
	Let $i_0 \in I$ be fixed.
	By the \name{Abhyankar-Moh-Suzuki}-Theorem, there exists 
	a trivial $\AAone$-bundle $f \colon \AA^2 \twoheadrightarrow  \AAone$
	such that $\Gamma_{i_0}$ is a fiber of $f$. Now, if the restriction
	$f |_{\Gamma_i} \colon \Gamma_i \to \AAone$ is non-constant, then it
	is surjective, since $\Gamma_i \simeq \AAone$. But this implies that
	$\Gamma_i \cap \Gamma_{i_0} \neq \varnothing$, a contradiction.
	Thus every $\Gamma_i$ is a fiber of $f$. This implies the proposition.
\end{proof}

\begin{proof}[Proof of Proposition~\ref{curves.prop}] $\textrm{}$	
	\begin{enumerate}[i)]

	\item 
	Assume that $\Gamma = \div(a)$ for some non-zero 
	$a \in \OO(\CC^2) = \CC[x, y]$. 
	For $(f_1, f_2) \in \CC[x, y]^2$ we denote by $a_{ij}(f_1, f_2)$ the coefficient of
	the monomial $x^i y^j$ in the polynomial $a(f_1, f_2)$. 
	The subgroup $\Aut(\AA^2, \Gamma)$ of $\Aut(\AA^2)$ 
	is defined by the equations
	\[
		a_{ij}(f_1, f_2) a_{kl}(x, y) = a_{kl}(f_1, f_2) a_{ij}(x, y) \quad
		\textrm{for all pairs $(i, j)$, $(k, l)$} \, . 
	\]This proves the first statement.
	
	Let $\g \in \Aut(\AA^2, \Gamma)$. By Theorem~\ref{algcurve.thm}, 
	$\g$ is conjugate to a triangular automorphism and hence 
	$\g$ is algebraic. This proves the second statement.
	
	\item 
	$a) \Rightarrow b)$: This follows immediately form 
	Proposition~\ref{genAMS.prop}.
	
	$b) \Rightarrow c)$:
	Let $\g \in \Aut(\AA^2, \Gamma)$ be a unipotent 
	automorphism $\neq \bid$.
	Choose some $a \in \OO(\CC^2)$ such that $\Gamma = \div(a)$.
	As $\g$ preserves $\Gamma$, it follows that $a$ is a semi-invariant
	for the $\GGa$-action on $\AA^2$ induced by $\g$. Since $\GGa$
	has no non-trivial character, $a$ is an invariant. Hence,
	$\bid \neq a \cdot \g \in \Iner(\AA^2, \Gamma)$.
	
	$c) \Rightarrow d)$: Let $\g \in \Iner(\AA^2, \Gamma)$ with $\g \neq \bid$.
	By Theorem~\ref{algcurve.thm}, $\g$ preserves a trivial $\AAone$-bundle
	$f \colon \AA^2 \twoheadrightarrow \AAone$.
	Let $\Gamma_i$, $i \in I$ be the irreducible components of 
	the reduced scheme $\Gamma_{\red}$.
	If every $\Gamma_i$ lies in a fiber of $f$, then $\Gamma$ is a fence and
	thus $\Aut(\AA^2, \Gamma)$ is not an algebraic group. Therefore 
	we can assume that $f(\Gamma_i) \subseteq \AAone$ is dense
	for some $i$. As $\g$ is the identity on $\Gamma_i$, it follows that
	$\g$ maps each fiber on itself. Hence, there exists $\alpha \in \CC^\ast$
	and a polynomial $b(y)$ such that for each $y \in \AAone$ the restriction
	of $\g$ to the fiber $f^{-1}(y)$ is given by
	\[
		\g_y \colon \AAone \to \AAone \, , \quad
		x \mapsto \alpha x + b(y) \, .
	\]
	As $\g$ is the identity on $\Gamma_i$, it follows that $\g_y$ has a fixed point
	for all $y \in f(\Gamma_i)$.
	If $\alpha = 1$, then $\g_y$ is the identity map for all 
	$y \in f(\Gamma_i) \subseteq \AAone$.
	Since $f(\Gamma_i)$ is dense in $\AAone$ we get a 
	contradiction to the fact that
	$\g \neq \bid$.  Thus, $\alpha \neq 1$. But this implies that 
	$\g_y$ has exactly one fixed point for each $y \in \AAone$.
	Thus, $\Gamma_i = V(ax + b(y)-x) \simeq \AAone$ and it is the only
	irreducible component of $\Gamma_{\red}$. 
	Therefore, $\Gamma$ is again a fence
	and $\Aut(\AA^2, \Gamma)$ is not an algebraic group.
	
	
	$d) \Rightarrow a)$: Assume that $\Gamma$ is not a fence.
	By \cite[Theorem~1]{BlSt2013Automorphisms-of-t}, 
	$\Aut(\AA^2, \Gamma_{\red})$ is an algebraic group. Now,
	$\Aut(\AA^2, \Gamma)$ is an algebraic group as well, since
	it is a closed subgroup of
	$\Aut(\AA^2, \Gamma_{\red})$.
\end{enumerate}
	
\end{proof}
\section{Some elementary facts}
\label{BasicFacts.sec}

\subsection{Locally nilpotent derivations}
\label{BasicPropertiesofLNDs.subsec}
Let $A$ be a $\CC$-algebra 
and assume it is a unique factorization domain (UFD).
Let $D$ be a $\CC$-derivation of $A$. Then, $D$ is called 
\emph{locally nilpotent}, if
for every $f \in A$ there exists an integer $n = n(f)$ such that $D^n(f) = 0$.
Moreover, $D$ is called \emph{irreducible}, if $D \neq 0$ and the following holds:
if $D = f D'$ for some locally nilpotent derivation $D'$ and some $f \in \ker D$, then
$f \in A^\ast$ where $A^\ast$ denotes the subgroup of units of $A$.

We list some basic facts about locally nilpotent derivations, 
that we will use constantly (see \cite{Fr2006Algebraic-theory-o} for proofs).

\begin{lem} Let $A$ be a $\CC$-algebra and assume it is a UFD, and let $D$
be a locally nilpotent derivation of $A$. Then

\begin{enumerate}[i)]
	\item The units of $A$ lie in $\ker D$.
		In particular, $\CC \subseteq \ker D$.
	\item The kernel $\ker D$ is factorially closed in $A$, i.e. if 
		$f, g \in A$ such that $fg \in \ker D$, then $f, g \in \ker D$.
	\item If $S \subseteq \ker D$ is a multiplicative system, then 
		$D$ extends uniquely to a locally nilpotent derivation of the localization
		$A_S$.
	\item If $D \neq 0$, then there exists 
		$f \in A$ such that $D(f) \in \ker D$ and $D(f) \neq 0$.
	\item If $s \in A$ such that $D(s) = 1$, then 
		$A$ is a polynomial ring in $s$ over
		$\ker D$ and $D = \partial / \partial s$.
	\item For $f \in A$, the derivation $f D$ is locally nilpotent if and only if
		$f \in \ker D$.
	\item If $D$ is irreducible and $E$ is another locally nilpotent derivation
		of $A$ such that $E(\ker D) = 0$, then there exists 
		$f \in \ker D$ such that $E = f D$.
	\item If $D \neq 0$, then there exists a unique irreducible 
		locally nilpotent derivation $D_0$
		(up to multiplication by some element of $A^\ast$) such
		that $\ker D = \ker D_0$.
	\item If $D(f) \in f A$, then $D(f) = 0$.
	\item The exponential $\exp(D) = \sum_{i = 0}^{\infty} D^i / {i!}$
		is a $\CC$-algebra automorphism of $A$ and the map $\exp$ defines
		an injection from the set of locally nilpotent derivations of $A$
		to the set of $\CC$-algebra automorphisms of $A$. 
\end{enumerate}
\end{lem}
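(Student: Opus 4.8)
The unifying tool in all ten parts is the $B$-degree: for $0 \neq f \in A$ let $\deg_B f$ be the largest $n$ with $B^n(f) \neq 0$ (finite by local nilpotence), and $\deg_B 0 = -\infty$. Since $A$ is a domain, leading terms multiply without cancellation, so $\deg_B(fg) = \deg_B f + \deg_B g$. Parts (i), (ii) and (ix) then drop out at once: for (i), $0 = \deg_B(u u^{-1}) = \deg_B u + \deg_B u^{-1}$ with both summands $\geq 0$ forces $\deg_B u = 0$, and $\CC \subseteq \ker B$ because $B$ is a $\CC$-derivation; (ii) is the same additivity applied to $fg \in \ker B$; and for (ix), if $B(f) = fg \neq 0$ then $\deg_B f \geq 1$, while $\deg_B(fg) = \deg_B f + \deg_B g \geq \deg_B f > \deg_B f - 1 = \deg_B B(f)$, a contradiction. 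Part (iii) is the routine check that $B(a/s) := B(a)/s$ (here $B(s) = 0$ by (i)) is well defined, is a derivation, and satisfies $B^n(a/s) = B^n(a)/s$, hence is again locally nilpotent. For (iv) I would take any $g$ with $B(g) \neq 0$, let $n \geq 2$ be minimal with $B^n(g) = 0$, and put $f = B^{n-2}(g)$, so $0 \neq B(f) = B^{n-1}(g) \in \ker B$.

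For the slice statement (v) I would use the Dixmier map $\pi_s(f) = \sum_{i \geq 0} \frac{(-1)^i}{i!} B^i(f)\, s^i$ (a finite sum): one checks directly that it is a $\ker B$-algebra homomorphism $A \to A$ with image in $\ker B$, with $\pi_s|_{\ker B} = \id$ and $\pi_s(s) = 0$, and that $f = \sum_{i \geq 0} \frac{1}{i!}\pi_s(B^i(f))\, s^i$ for every $f$; this exhibits $A$ as the polynomial ring $(\ker B)[s]$ with $B = \partial/\partial s$. Part (vi): if $f \in \ker B$ then $(fB)^n = f^n B^n$, so $fB$ is locally nilpotent; conversely, if $f \neq 0$ and $fB$ is locally nilpotent, apply (iv) to $fB$ to get $h$ with $0 \neq (fB)(h) = f B(h) \in \ker(fB) = \ker B$, and (ii) yields $f \in \ker B$ (the case $f = 0$ is trivial). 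For (x), $\exp(B)$ is a $\CC$-algebra endomorphism by the Leibniz/binomial identity $B^n(ab) = \sum_k \binom{n}{k} B^k(a) B^{n-k}(b)$, it is invertible with inverse $\exp(-B)$ since $B$ commutes with $-B$, and $\exp$ is injective because the formal logarithm $\varphi \mapsto \sum_{i \geq 1} \frac{(-1)^{i+1}}{i}(\varphi - \id)^i$ — which terminates on each element since $\varphi - \id$ strictly lowers $\deg_B$ — is a left inverse of $\exp$ on locally nilpotent derivations.

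The substantive parts are (vii) and the uniqueness half of (viii). For (vii): localise at $S = \ker B \setminus \{0\}$; writing $K$ for the field of fractions of $\ker B$, the element $f$ from (iv) has $B(f) \in S$, so $f/B(f)$ is a slice of the extended derivation on $A_S$, whence by (v) $A_S = K[s]$ with $B = \partial/\partial s$. Since $E$ also extends to $A_S$ and kills $K$, on $A_S$ we have $E = E(s)\, B$. Clearing denominators gives $p B = q E$ as operators on $A$ with $p, q \in \ker B$ (apply (vi) on both sides to see the multipliers lie in the respective kernels); dividing by a gcd in the UFD $A$ — legitimate by factorial closedness (ii) — we may take $p, q$ coprime in $A$, so that $q \mid B(a)$ for all $a$. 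If $q$ were a non-unit this would contradict irreducibility of $B$: then $B/q$ (defined by $a \mapsto B(a)/q$) is again a derivation, is locally nilpotent because $q^n \mid B^n(a)$ by induction (using that $q$ divides the whole image of $B$ and $q \in \ker B$), and $q \in \ker(B/q) = \ker B$. Hence $q \in A^\ast$ and $E = (p/q) B$ with $p/q \in \ker B$. For (viii), existence: let $d$ be a gcd of $\{B(a) : a \in A\}$ in $A$; one shows $d \mid B(d)$, hence $d \in \ker B$ by (ix), and then $B/d$ is irreducible with the same kernel. Uniqueness up to $A^\ast$ follows by applying (vii) twice: if $B', B''$ are irreducible with $\ker B' = \ker B'' = \ker B$, then $B'' = f B'$ and $B' = g B''$ with $f, g \in \ker B$, so $fg = 1$ and $f \in A^\ast$.

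The main obstacle I anticipate is (vii), together with the gcd bookkeeping it forces into (viii): these are the only places where the UFD hypothesis is genuinely used, and the one delicate point is checking that after dividing $B$ by a common divisor of its image one still has a locally nilpotent derivation — this needs the inductive divisibility $q^n \mid B^n(a)$, which relies on $q$ dividing the entire image of $B$, not merely one value. Everything else is formal manipulation with the degree function and the exponential series.
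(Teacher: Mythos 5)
Your proposal is correct, but there is nothing in the paper to compare it with: the paper states this lemma as a list of standard facts and simply refers to \cite{Fr2006Algebraic-theory-o} for the proofs, giving no internal argument. What you have written is essentially the standard collection of proofs from that reference: additivity of the $B$-degree on the domain $A$ for (i), (ii), (ix); the Dixmier map $\pi_s$ for the slice theorem (v); and, for the substantive items (vii) and (viii), localization at $S=\ker B\setminus\{0\}$ to get $A_S=K[s]$ with $B=\partial/\partial s$, followed by clearing denominators and a gcd argument in the UFD $A$, with the key observation that if $q\in\ker B$ divides the entire image of $B$ then $B/q$ is again a locally nilpotent derivation because $q^n\mid B^n(a)$ (this is indeed exactly where the UFD hypothesis is used, and your inductive divisibility argument is right). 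The only points you leave implicit are routine and do not affect correctness: in (v) one still has to note that $s$ is transcendental over $\ker B$ (apply $B^m$ to a putative relation $\sum_{i\le m} c_i s^i=0$ with $c_i\in\ker B$, $c_m\neq 0$, to get $m!\,c_m=0$); in (ii) and (ix) the statements tacitly assume the relevant elements are nonzero; in (iii) the reason $B(s)=0$ for $s\in S$ is simply the hypothesis $S\subseteq\ker B$, not part (i); and in (viii) the irreducibility of $B/d$ deserves the one-line remark that any common divisor of the values of $B/d$ multiplies $d$ into a larger common divisor of the values of $B$. With those trivial additions your argument is complete and matches the standard literature proofs that the paper is invoking.
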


\subsection{Uniptent automorphisms of $\CC^3$}
\label{BasicPropertiesOfAUnipotAuto.subsec}


Let $\bu \in \Aut(\CC^3)$ be a unipotent automorphism.
We call $\bu$ \emph{irreducible} if the corresponding
locally nilpotent derivation is irreducible.
If $\bu \neq \bid$, there exists an irreducible
$\bu_0 \in \Aut(\AA^3)$ such that $\bu$ is the modification 
$d \cdot \bu_0$ for some $0 \neq d \in \ker D$. The automorphism
$\bu_0$ is unique up to a modification
by some element in $\CC^\ast$ (cf. subsec.~\ref{BasicPropertiesofLNDs.subsec}). 
We call then 
\[
	\bu = d \cdot \bu_0
\] 
a \emph{standard decomposition}. 

Now, let $\bu$, $\bu'$ be unipotent automorphisms of $\CC^3$ such that
there exists $0 \neq f \in \OO(\CC^3)^\bu$ with $\bu = f \cdot \bu'$.
If $\rho$ and $\rho'$ denote the 
$\CC^+$-actions on $\CC^3$ corresponding to $\bu$ and $\bu'$ respectively, then
the orbits of $\rho$ are contained in the orbits of $\rho'$ and the orbits are
equal on the principal open set $(\CC^3)_f$. Moreover, if $\bu = \Exp(D)$ and
$\bu' = \Exp(D')$, then $\im D = f \im D' \subseteq \OO(\CC^3)$ 
and the plinth divisors $\Gamma$, $\Gamma'$ of $\bu$, $\bu'$ satisfy 
\[
	\Gamma = \Gamma' + \div(f) \, .
\]  

Whenever $S$ is a subset of $\Aut(\CC^n)$ we denote by $S_u$
the subset of unipotent automorphisms in $S$.  

\section{\texorpdfstring{Structure theorems for $\Cent(\bu)$}
	{Structure theorems for Cent(u)}}
\label{Cent.sec}

\subsection{\texorpdfstring{The first unipotent subgroup in $\Cent(\bu)$}
		 {The first unipotent subgroup in Cent(u)}}
\label{The_first_subgroup.subsec}
Let $\bid \neq \bu \in \Aut(\AA^3)$ be unipotent and let 
$\bu = d \cdot \bu_0$ be a standard
decomposition.
There exists an obvious subgroup of unipotent automorphisms in $\Cent(\bu)$: 
The modifications of $\bu_0$, i.e. the subgroup $\OO(\AA^3)^{\bu_0} \cdot \bu_0$.
This subgroup has another characterization:

\begin{prop}
	\label{firstfam.prop}
	Let $\bid \neq \bu \in \Aut(\AA^3)$ be unipotent with standard decomposition 
	$\bu = d \cdot \bu_0$. 
	The subgroup $\OO(\AA^3)^{\bu_0} \cdot \bu_0$ 
	consists of those automorphisms of $\AA^3$ that commute with 
	$\bu$ and that induce the identity on $\AA^3 \aquot \bu$, 
	i.e. the sequence
	\[
		1 \to \OO(\AA^3)^{\bu_0} \cdot \bu_0 
		\hookrightarrow \Cent(\bu)
		\stackrel{p}{\to} \Aut(\AA^3 \aquot \bu, \Gamma) \cap 
		\Aut(\AA^3 \aquot \bu, \Gamma_0)
	\]
	is exact, where $\Gamma$, $\Gamma_0$ denote the plinth divisors of 
	$\bu$, $\bu_0$ respectively.
	Moreover, the homomorphisms in the sequence above are
	homomorphisms of ind-groups.
\end{prop}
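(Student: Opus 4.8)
The plan is to establish, in order: that $p$ is defined on all of $\Cent(\bu)$ and takes values in $\Aut(\AA^3 \aquot \GGa, \Gamma, \Gamma')$; that $\OO(\AA^3)^{\bu'} \cdot \bu' \subseteq \ker p$; that $\ker p \subseteq \OO(\AA^3)^{\bu'} \cdot \bu'$, which together with the obvious injectivity of the first map gives exactness; and finally that all the maps are morphisms of ind-groups. Throughout I write $\bu = \Exp(D)$ and $\bu' = \Exp(D')$ with $D = dD'$ and $D'$ irreducible, $A = \OO(\AA^3)$, and $A_0 = \ker D = \ker D'$; since $d \neq 0$ and $A$ is a domain one also has $\ker D^{n+1} = \ker D'^{n+1}$ for every $n$, and $\AA^3 \aquot \GGa = \Spec A_0$.

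For the first point, take $\g \in \Cent(\bu)$. Passing to comorphisms and using injectivity of the exponential map, $\g^{\ast}$ commutes with $D$; hence $\g^{\ast}$ preserves $A_0 = \ker D$ and induces an automorphism $\bar\g$ of $\Spec A_0$, and it preserves $\im D$, hence the plinth ideal $\im D \cap \ker D = a A_0$, so $\g^{\ast}(a) \in \CC^{\ast} a$ and $\bar\g(\Gamma) = \Gamma$. To handle $\Gamma'$ I would observe that $D'' := \g^{\ast} D' (\g^{\ast})^{-1}$ is a locally nilpotent derivation of $A$, irreducible because $\g^{\ast}$ is an automorphism, with $\ker D'' = \g^{\ast}(\ker D') = \ker D'$; since the irreducible locally nilpotent derivation with prescribed kernel is unique up to a scalar, $D'' = \lambda D'$ with $\lambda \in \CC^{\ast}$, so $\g^{\ast}$ preserves $\im D'$, hence $a' A_0 = \im D' \cap \ker D'$, and $\bar\g(\Gamma') = \Gamma'$. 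Thus $p(\g) \in \Aut(\AA^3 \aquot \GGa, \Gamma, \Gamma')$. The inclusion $\OO(\AA^3)^{\bu'} \cdot \bu' \subseteq \ker p$ is immediate: for $f \in \ker D'$ one has $[\,dD', fD'\,] = 0$ since $d, f \in \ker D'$, so $\Exp(fD')$ commutes with $\bu$, and it fixes $A_0$ pointwise, hence induces the identity on the quotient.

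The reverse inclusion $\ker p \subseteq \OO(\AA^3)^{\bu'} \cdot \bu'$ is the heart of the proof and the step I expect to be the main obstacle. Let $\g \in \ker p$, so $\g^{\ast}$ commutes with $D$ and is the identity on $A_0$. Put $A_{\leq n} := \ker D^{n+1}$, so $A = \bigcup_n A_{\leq n}$ and $\g^{\ast}$ preserves each $A_{\leq n}$; the key estimate is that for $h \in A_{\leq n}$,
\[
	D^n\bigl(\g^{\ast}(h) - h\bigr) = \g^{\ast}\bigl(D^n h\bigr) - D^n h = 0
\]
because $D^n h \in A_0$, so $(\g^{\ast} - \id)(A_{\leq n}) \subseteq A_{\leq n-1}$. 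Consequently $\g^{\ast} - \id$ is locally nilpotent, $E := \log \g^{\ast} = \sum_{k \geq 1} \frac{(-1)^{k-1}}{k}(\g^{\ast} - \id)^k$ is a well-defined locally nilpotent derivation of $A$ with $\exp E = \g^{\ast}$, and $E$ annihilates $A_0$ because $(\g^{\ast} - \id)(A_0) = 0$. Since $D'$ is irreducible and $E(\ker D') = 0$, it follows that $E = f D'$ for some $f \in \ker D'$, and therefore $\g = \Exp(fD') = f \cdot \bu' \in \OO(\AA^3)^{\bu'} \cdot \bu'$. The delicate point is the filtration estimate above: it is what forces $\g$ to be unipotent and makes the structural fact about irreducible derivations applicable.

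For the ind-group assertions, note first that $\Cent(\bu)$ is closed in $\Aut(\AA^3)$, being the preimage of $\bid$ under $\g \mapsto \g \bu \g^{-1} \bu^{-1}$, and that $\Aut(\AA^3 \aquot \GGa, \Gamma, \Gamma')$ is closed in $\Aut(\AA^2)$ (trivially if $\Gamma$ is empty, otherwise by Proposition~\ref{curves.prop}); so both are ind-groups with the induced filtrations. The map $f \mapsto \Exp(fD')$ is a morphism of ind-varieties, since for $f \in \ker D'$ one has $(fD')^k = f^k D'^k$, which vanishes for $k$ large, so $\Exp(fD')(x_i) = \sum_k f^k D'^k(x_i)/k!$ is a polynomial expression in the bounded-degree coefficients of $f$; and it is a group homomorphism because $\Exp(fD')\Exp(gD') = \Exp((f+g)D')$. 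Finally $p \colon \g \mapsto \bar\g$ is a morphism of ind-varieties because, after fixing polynomial coordinates on $A_0$, the components of $\bar\g$ are obtained from $\g^{\ast}$ applied to those coordinates by a linear (hence algebraic) rewriting, with degrees bounded in terms of $\deg \g$; and $p$ is a group homomorphism since restriction of automorphisms to an invariant closed subvariety is functorial.
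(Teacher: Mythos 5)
Your proposal is correct, and its overall skeleton matches the paper's: the exactness statement is reduced to showing that an automorphism commuting with $\bu$ and inducing the identity on $\AA^3 \aquot \GGa$ is exactly a modification of $\bu'$, and the ind-group assertions are handled by expressing $p(\g)$ through $\g^\ast(v_1), \g^\ast(v_2)$ and a linear rewriting in the generators of $\OO(\AA^3)^{\bu}$, which is essentially the retraction diagram of Remark~\ref{indhomo.rem}. Where you genuinely diverge is in the key kernel inclusion, which the paper isolates as Lemma~\ref{identity.lem}: there one localizes at an element $d$ of the plinth so that $A_d = \ker(B)_d[s]$ with $B(s)=1$, observes that $\tilde\varphi(s) = s+g$ for some $g \in \ker(B)_d$, hence $\varphi = \exp(gB)|_A$, and then a density argument in the parameter $t$ of $\exp(tgB)$ shows that $gB$ preserves $A$, after which irreducibility of $B'$ gives $gB = fB'$. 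You instead avoid the localization and the density step entirely by proving, via the filtration $A_{\leq n} = \ker D^{n+1}$, that $\g^\ast - \id$ is locally nilpotent, and then setting $E = \log \g^\ast$ and applying the same irreducibility fact. This is a clean and slightly more direct route, but note that it relies on the statement that the logarithm of a $\CC$-algebra automorphism $\varphi$ with $\varphi - \id$ locally nilpotent is again a locally nilpotent derivation with $\exp E = \varphi$; this is standard in characteristic zero but is not among the basic facts the paper lists (only injectivity of $\exp$ is), so you should either cite it (e.g. van den Essen's book or Freudenburg's) or include the short verification that $\log \varphi$ satisfies the Leibniz rule. A further point in your favour: you actually verify that $p$ takes values in $\Aut(\AA^3 \aquot \GGa, \Gamma, \Gamma')$, using uniqueness of the irreducible derivation with prescribed kernel to see that $\Gamma'$ is preserved, whereas the paper only asserts this in the discussion preceding the proposition.
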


This result is an immediate consequence of Remark~\ref{indhomo.rem} 
and Lemma~\ref{identity.lem}.

\begin{rem}
	\label{indhomo.rem}
	Choose generators $v_1, v_2$ of the polynomial ring
	$\OO(\AA^3)^{\bu}$ and choose a $\CC$-linear retraction
	$rÊ\colon \OO(\AA^3) \twoheadrightarrow \OO(\AA^3)^{\bu}$. 
	The map $p \colon \Cent(\bu) \to \Aut(\AA^3 \aquot \bu, \Gamma)$
	is a morphism of ind-varieties due to the following commutative
	diagram
	\[
		\xymatrix{
			\End(\AA^3) \ar[rrr]^-
			{\g \mapsto (\g^\ast(v_1), \g^\ast(v_2))}_-{\textrm{morph.}} 
			&&& \OO(\AA^3)^2 
			\ar@{->>}[r]^-{r \times r}_-{\textrm{lin.}} & (\OO(\AA^3)^{\bu})^2 \\
			\Cent(\bu) \ar@{^(->}[u]_-{\textrm{loc. closed}} 
			\ar[rrrr]^-{p} &&&& \Aut(\AA^3 \aquot \bu, \Gamma)
			\ar@{^(->}[u]^-{\textrm{loc. closed}} \, .
		}
	\]
\end{rem}

\begin{lem}
	\label{identity.lem}
	Let $A$ be a $\CC$-algebra and assume it is a UFD, let
	$B, B'$ be non-zero locally nilpotent derivations of $A$ such that $B'$
	is irreducible and $\ker B = \ker B'$.
	If $\varphi \colon A \to A$ is a $\CC$-algebra automorphism, then we have
	\[
		\varphi |_{\ker B} = \id \ \textrm{and} \ 
		\varphi \circ B = B \circ \varphi \quad \textrm{if and only if} \quad 
		\varphi = \exp(fB') \, , \ f \in \ker B \, .
	\]
\end{lem}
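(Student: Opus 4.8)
The plan is to run both implications through the irreducible derivation $B'$. Write $K := \ker B = \ker B'$. Since $B'$ is irreducible, $B \neq 0$ and $B(K) = 0$, part (vii) of the basic lemma on locally nilpotent derivations provides $h \in K$ with $h \neq 0$ and $B = hB'$. I would first record the elementary observation that, for \emph{any} $\CC$-algebra automorphism $\varphi$ of $A$ with $\varphi|_{K} = \id$, one has $\varphi(B(x)) = h\,\varphi(B'(x))$ and $B(\varphi(x)) = h\,B'(\varphi(x))$ for every $x \in A$ (using $\varphi(h) = h$); as $A$ is a domain and $h \neq 0$, this shows that $\varphi$ commutes with $B$ if and only if it commutes with $B'$. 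From now on I work with $B'$.

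For the implication $(\Leftarrow)$: given $f \in K$, the derivation $fB'$ is locally nilpotent (part (vi)), so $\varphi := \exp(fB')$ is a well-defined automorphism of $A$ (part (x)). Since $fB'$ annihilates $K$, the automorphism $\varphi$ fixes $K$ pointwise; and since $f \in K = \ker B'$ the operators $fB'$ and $B'$ commute, hence $\varphi$ commutes with $B'$, and therefore --- by the observation above --- also with $B$.

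For the implication $(\Rightarrow)$: suppose $\varphi|_K = \id$ and $\varphi B = B\varphi$; by the observation $\varphi B' = B'\varphi$. On $A \setminus \{0\}$ I would introduce the $B'$-degree $\deg x := \max\{k : (B')^k(x) \neq 0\}$, which is finite because $B'$ is locally nilpotent, and set $\delta := \varphi - \id$. The crucial estimate is $\deg(\delta(x)) < \deg(x)$ for all $x \neq 0$: if $n = \deg(x)$ then $(B')^n(x) \in K$ is fixed by $\varphi$, so $(B')^n(\delta(x)) = \varphi((B')^n(x)) - (B')^n(x) = 0$. Hence $\delta$ --- and likewise the locally finite sum $E := \log(\id + \delta) = \sum_{k \geq 1} \frac{(-1)^{k-1}}{k}\delta^k$ --- strictly lowers $\deg$, so both are locally nilpotent $\CC$-linear operators. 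Invoking the standard fact that the logarithm of a $\CC$-algebra automorphism whose difference from the identity is locally nilpotent is a locally nilpotent derivation, $E$ is a locally nilpotent derivation of $A$ with $\exp(E) = \varphi$, and $E(K) = 0$ because $\delta(K) = 0$. Then part (vii) of the basic lemma, now with the irreducible $B'$ and with $E$, gives $f \in K$ such that $E = fB'$, and hence $\varphi = \exp(E) = \exp(fB')$.

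The real content lies in the $(\Rightarrow)$ direction, and specifically in the step that $\log(\id + \delta)$ is a locally nilpotent derivation. Local nilpotence is immediate from the degree estimate; for the Leibniz rule I would use that $\varphi^{(t)} := \sum_k \binom{t}{k}\delta^k$ satisfies $\varphi^{(1)} = \varphi$ and $\varphi^{(s+t)} = \varphi^{(s)}\varphi^{(t)}$ (Vandermonde), so $\varphi^{(m)} = \varphi^{m}$ is an algebra homomorphism for every positive integer $m$; since $t \mapsto \varphi^{(t)}(z)$ is a polynomial in $t$ for each fixed $z$, the identity $\varphi^{(t)}(xy) = \varphi^{(t)}(x)\,\varphi^{(t)}(y)$ holds for all $t \in \CC$, and differentiating at $t = 0$ yields $E(xy) = E(x)y + xE(y)$. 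Once $E$ is recognised as a locally nilpotent derivation annihilating $K$, irreducibility of $B'$ finishes the proof immediately.
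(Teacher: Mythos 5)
Your proof is correct, but it follows a genuinely different route from the paper. The paper localizes: it picks $0 \neq d \in \ker B$ so that $A_d = (\ker B)_d[s]$ with $B(s)=1$, observes that the extension of $\varphi$ must send $s \mapsto s+g$ for some $g \in (\ker B)_d$, hence $\varphi = \exp(gB)|_A$, and then uses a density argument on the one-parameter family $\exp(tgB)$ (which preserves $A$ for integer $t$, hence for all $t$, hence in the derivative at $t=0$) to conclude that $gB$ maps $A$ into $A$; only then does it invoke irreducibility of $B'$ to write $gB = fB'$. You never leave $A$ and never introduce a slice or denominators: after reducing commutation with $B$ to commutation with $B'$ via $B = hB'$, you take the operator logarithm $E = \log(\id+\delta)$ of $\varphi$ itself, prove local nilpotency of $\delta$ and $E$ through the $B'$-degree estimate $\deg\delta(x) < \deg x$ (which crucially uses both hypotheses: commutation and triviality on $\ker B$), and establish the Leibniz rule by binomial interpolation $\varphi^{(t)} = \sum_k \binom{t}{k}\delta^k$ together with Vandermonde; the endgame, applying part (vii) of the basic lemma to the irreducible $B'$, is the same in both proofs. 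Your interpolation in $t$ plays exactly the role of the paper's density argument, so the two proofs are parallel in spirit, but yours avoids the local slice theorem and the verification that an a priori $A_d$-derivation restricts to $A$, at the cost of a slightly longer formal computation; if you want it airtight, add the one line that $\exp(E) = \id + \delta = \varphi$ follows from the formal identity $\exp(\log(1+T)) = 1+T$ applied termwise, which is legitimate because all sums involved are locally finite.
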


\begin{proof}
	Assume that $\varphi |_{\ker B}$ is the identity and $\varphi$ 
	commutes with $B$. There exists 
	$0 \neq d \in \ker B$, such that $A_d = \ker(B)_d[s]$ is a polynomial ring
	in an element $s \in A_d$ and $B(s) = 1$, if we extend $B$ to $A_d$. 
	Since $\varphi$ commutes with $B$ there exists 
	$g \in \ker(B)_d$ such that the extension $\tilde{\varphi}$ to $A_d$ 
	of $\varphi$ satisfies $\tilde{\varphi}(s) = s + g$. 
	Now, we have $\varphi = \exp(g B) |_A$.
	A density argument shows that
	$\exp(t gB)(A) \subseteq A$ for all $t \in \CC$. Since 
	\[
		gB = \frac{\exp(t gB) - \id}{t} \Big| _{t =0} \, ,
	\]
	we have $gB(A) \subseteq A$. Hence $gB$ is a locally nilpotent derivation
	of $A$ that vanishes on $\ker B = \ker B'$. Thus, $gB = f B'$
	for some $f \in \ker B$. The converse is clear.
\end{proof}

\begin{rem}
	\label{ActionByConj.rem}
	By Proposition~\ref{firstfam.prop}, $\Cent(\bu)$ normalizes 
	$\OO(\CC^3)^{\bu_0} \cdot \bu_0$ and one can easily see, that the action is
	given by
	\[
		\g^{-1} \circ f \cdot \bu_0 \circ \g = \mu(\g) \g^\ast(f) \cdot \bu_0 \, , \quad
		\g \in \Cent(\bu) \, , \ f \in \OO(\CC^3)^{\bu_0} 	
	\]
	where $\mu \colon \Cent(\bu) \to \CC^\ast$ is the homomorphism
	given by $\mu(\g) d = \g^{\ast}(d)$.
\end{rem}

\subsection{\texorpdfstring{Centralizer of a modified 
	    translation in $\Aut(\AA^3)$}
	    {Centralizer of a modified translation in Aut(C3)}}
\label{modtranslation.sec}

%

\begin{prop}
	\label{modtranslation.prop}
	Let $\bid \neq \bu \in \Aut(\AA^3)$ be a modified translation
	with standard decomposition $\bu = d \cdot \bu_0$.
	Denote by $\Gamma$ the plinth divisor of $\bu$. Then
	\[
			1 \to \OO(\AA^3)^{\bu_0} \cdot \bu_0 
			\hookrightarrow \Cent(\bu) \stackrel{p}{\to}
			\Aut(\AA^3 \aquot \bu, \Gamma) \to 1
	\]
	is a split short exact sequence of ind-groups.
	Moreover, there exists a closed subgroup of $\Cent(\bu)$
	that is mapped via $p$ isomorphically onto 
	$\Aut(\AA^3 \aquot \bu, \Gamma)$.
\end{prop}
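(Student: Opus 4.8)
The plan is to construct an explicit group homomorphism $s\colon \Aut(\AA^3 \aquot \GGa, \Gamma) \to \Cent(\bu)$ which is a section of $p$ and a morphism of ind-groups; its image will be the closed subgroup required by the ``moreover'', and its mere existence already yields the surjectivity of $p$ and the splitting. Since $\bu$ is a modified translation, I fix coordinates $(x, y, z)$ of $\AA^3$ in which $\bu(x, y, z) = (x + d(y, z), y, z)$ with $d \in \CC[y, z]$; then $\bu'(x, y, z) = (x + 1, y, z)$, one has $\OO(\AA^3)^\bu = \OO(\AA^3)^{\bu'} = \CC[y, z]$, the quotient $\pi$ is the projection $(x, y, z) \mapsto (y, z)$ onto $\AA^3 \aquot \GGa = \AA^2$, the plinth divisor of $\bu$ is $\Gamma = \div(d)$, and the plinth divisor of $\bu'$ is empty. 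In particular the target of $p$ in Proposition~\ref{firstfam.prop} is exactly $\Aut(\AA^2, \Gamma)$, and that proposition already provides exactness on the left and in the middle, as well as the fact that $p$ is a homomorphism of ind-groups. Thus everything reduces to producing $s$.

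First I record that every $\psi = (\psi_1, \psi_2) \in \Aut(\AA^2, \Gamma)$ satisfies $\psi^\ast(d) = c(\psi)\, d$ for a unique $c(\psi) \in \CC^\ast$, because $\psi$ preserves the divisor $\div(d)$ and the units of $\CC[y, z]$ are the constants; comparing, on each term of the filtration, the coefficient of a fixed monomial occurring in $d$ on the two sides of this identity shows that $c \colon \Aut(\AA^2, \Gamma) \to \CC^\ast$ is a morphism of ind-groups, and clearly $c(\psi \circ \psi') = c(\psi)\, c(\psi')$. I then define
\[
	s(\psi)(x, y, z) = \bigl( c(\psi)\, x,\ \psi_1(y, z),\ \psi_2(y, z) \bigr).
\]
A direct computation shows that $s(\psi) \in \Aut(\AA^3)$, that $s(\psi)$ commutes with $\bu$ --- the relevant identity being $d(\psi_1(y, z), \psi_2(y, z)) = \psi^\ast(d) = c(\psi)\, d(y, z)$ --- that $s$ is a group homomorphism (using $c(\psi \circ \psi') = c(\psi)\, c(\psi')$), and that $p(s(\psi)) = \psi$ since $s(\psi)$ restricts to $\psi^\ast$ on $\CC[y, z]$. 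Because $\deg s(\psi) = \max(1, \deg \psi)$ and the coefficients of $s(\psi)$ depend polynomially on those of $\psi$ and on $c(\psi)$, the map $s$ is a morphism of ind-groups. In particular $p$ is surjective, so the sequence is a split short exact sequence of ind-groups.

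It remains to identify the closed subgroup. Put $r = s \circ p \colon \Cent(\bu) \to \Cent(\bu)$, a morphism of ind-groups; since $p \circ s = \id$ it is idempotent, and its image is $S = \im s = s(\Aut(\AA^2, \Gamma))$. Hence $S = \{\, \g \in \Cent(\bu) : r(\g) = \g \,\}$, which is closed: on each term $\Cent(\bu)_i$ of the filtration it is the preimage of the diagonal under the morphism $(\id, r)$ into a product of affine varieties. Restricting $p$ to $S$ and corestricting $s$ to $S$ now give mutually inverse morphisms of ind-groups between $S$ and $\Aut(\AA^2, \Gamma)$, so $p$ maps $S$ isomorphically onto $\Aut(\AA^3 \aquot \GGa, \Gamma)$, which is the remaining assertion.

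I do not expect a serious obstacle: in contrast with the general case --- where $\bu$ is not a modified translation and $p$ need not even be surjective --- the lift of an automorphism of the quotient is entirely explicit once one works in the coordinates of a modified translation. The two points that nonetheless require care are that the character $c$, and hence the section $s$, really is a morphism of ind-varieties (so that the splitting occurs in the category of ind-groups and not merely abstractly), and the closedness of $S$, which is taken care of by the idempotent-retraction argument above.
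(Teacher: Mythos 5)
Your proposal is correct and follows essentially the same route as the paper: after Proposition~\ref{firstfam.prop} gives left exactness, you fix coordinates with $\bu=(x+d,y,z)$, use the character $\f\mapsto\lambda(\f)$ defined by $\f^\ast(d)=\lambda(\f)d$, and lift $\f$ to $(x,y,z)\mapsto(\lambda(\f)x,\f(y,z))$, which is exactly the paper's closed subgroup $H$ (your $\im s$), with only a cosmetic difference in how closedness of that subgroup is verified.
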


\begin{rem}
	\label{AlgebraicElements.rem}
	Under the assumptions of Proposition~\ref{modtranslation.prop},
	$\Cent(\bu)$ consists only of algebraic elements, provided that 
	$\Gamma$ is non-empty. Indeed, let $H \subseteq \Cent(\bu)$
	be a closed subgroup, such that $p$ induces an isomorphism
	$H \simeq \Aut(\CC^3 \aquot \bu, \Gamma)$. 
	Let $(f \cdot \bu_0, \h) \in \OO(\CC^3)^{\bu_0} 
	\cdot \bu_0 \rtimes H \simeq \Cent(\bu)$. By Proposition~\ref{curves.prop} i),
	$R = \overline{\langle \h \rangle}$ is an algebraic subgroup of $H$.
	Hence
	\[
		W = \Span \{ \, \br^\ast(f) \ | \ \br \in R \, \}
	\]
	is a finite dimensional subspace of $\OO(\CC^3)^{\bu_0}$ and
	$W \cdot \bu_0 \rtimes R \subseteq \OO(\CC^3)^{\bu_0} \cdot \bu_0 \rtimes H$
	is an algebraic subgroup  that contains $(f \cdot \bu_0, \h)$
	(see Remark~\ref{ActionByConj.rem}).
\end{rem}

\begin{proof}[Proof of Proposition~\ref{modtranslation.prop}]
	By Proposition~\ref{firstfam.prop}, the sequence is left
	exact.
	By assumption, there exist coordinates $(x, y, z)$ on $\CC^3$
	such that $d \in \CC[y, z]$ and $\bu = (x+d, y, z)$. Moreover, 
	we can identify the algebraic
	quotient $\pi \colon \AA^3 \to \CC^3 \aquot \bu = \AA^2$ 
	with the map $(x, y, z) \mapsto (y, z)$
	and $\Gamma = \div(d)$.
	Let $\f \in \Aut(\AA^2, \Gamma)$. Then $\f^\ast(d) = \lambda(\f) d$
	for some $\lambda(\f) \in \CC^\ast$.
	One can see that 
	$\Aut(\AA^2, \Gamma) \to \CC^\ast$, $\f \mapsto \lambda(\f)$
	is a homomorphism of ind-groups. Thus
	\[
		H = \{ \, \sigma \in \Aut(\CC^3) \ | \ \sigma(x, y, z) = 
		(\lambda x, \f(y, z)) \ \textrm{with} \ \f \in 
		\Aut(\AA^2, \Gamma) \ \textrm{and} \ \lambda(\f) = \lambda 
		\, \}
	\]
	is a closed subgroup of $\Cent(\bu)$ 
	(note that the subgroup $\Aut(\AA^2, \Gamma) \subseteq \Aut(\AA^2)$ 
	is closed by Proposition~\ref{curves.prop}) and
	$p|_H \colon H \to \Aut(\AA^2, \Gamma)$ is an isomorphism of ind-groups.
\end{proof}

\begin{prop}
	Let $\bu \in \Aut(\CC^3)$ be unipotent. Then 
	$\Cent(\bu)$ contains $(\CC^\ast)^2$ as an algebraic subgroup
	if and only if $\bu$ is a modified translation and the plinth divisor
	$\Gamma$ is given by $v^i w^j$ for some coordinates $(v, w)$
	of $\CC^3 \aquot \bu \simeq \CC^2$.
\end{prop}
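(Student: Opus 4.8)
The plan is to treat the two implications separately; the ``only if'' direction is the substantial one. For \emph{sufficiency}, suppose $\bu = d\cdot\bu'$ is a standard decomposition of a modified translation and that $\Gamma = \div(v^iw^j)$ for coordinates $(v,w)$ of $\CC^3\aquot\GGa\simeq\CC^2$. The diagonal torus $T_0 = \{(v,w)\mapsto(\alpha v,\beta w)\mid(\alpha,\beta)\in(\CC^\ast)^2\}$ is an algebraic subgroup of $\Aut(\CC^2)$ preserving $\div(v^iw^j)$, so $T_0\subseteq\Aut(\CC^2,\Gamma)$. By Proposition~\ref{modtranslation.prop} there is a closed subgroup $H\subseteq\Cent(\bu)$ on which $p$ restricts to an isomorphism of ind-groups $p|_H\colon H\to\Aut(\CC^3\aquot\GGa,\Gamma)$, and I would take the preimage $(p|_H)^{-1}(T_0)$: it is an algebraic subgroup of $\Cent(\bu)$ isomorphic to $(\CC^\ast)^2$. (If $\Gamma=\varnothing$, i.e. $\bu$ is a translation, the same argument works with $\Aut(\CC^2,\Gamma)=\Aut(\CC^2)$.)

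For \emph{necessity}, assume $T\subseteq\Cent(\bu)$ is an algebraic subgroup isomorphic to $(\CC^\ast)^2$. If $\bu$ is a translation then $\Gamma=\varnothing=\div(v^0w^0)$ and we are done, so assume $\bu$ is not a translation and write $\bu=d\cdot\bu'$ for a standard decomposition, with plinth divisors $\Gamma$, $\Gamma'$. A torus has no nontrivial unipotent element, while $\ker p=\OO(\CC^3)^{\bu'}\cdot\bu'$ is unipotent (Proposition~\ref{firstfam.prop}); hence $p|_T$ is injective and $p(T)$ is an algebraic $2$-torus inside $\Aut(\CC^3\aquot\GGa,\Gamma,\Gamma')\subseteq\Aut(\CC^2)$. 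Since reductive subgroups of $\Aut(\CC^2)$ are linearizable, every $2$-torus of $\Aut(\CC^2)$ is conjugate to the diagonal one; so after a change of the coordinates $(v,w)$ of $\CC^2$ we may assume $p(T)$ is diagonal. As $p(T)$ preserves $\Gamma$ and $\Gamma'$, these divisors are cut out by torus-semi-invariants, i.e. by monomials, and we obtain $\Gamma=\div(v^iw^j)$ and $\Gamma'=\div(v^{i'}w^{j'})$.

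It remains to show $\bu$ is a modified translation, equivalently that $\bu'$ is a translation, equivalently $\Gamma'=\varnothing$. For this I would linearize the $T$-action on $\CC^3$ itself (torus actions on $\CC^3$ are linearizable), getting coordinates $x_1,x_2,x_3$ in which $T$ acts diagonally with character $\chi_k$ on $x_k$, the $\chi_k$ generating the character lattice of $T$. Since $T\subseteq\Cent(\bu)$ and $\Exp$ is injective, the locally nilpotent derivation $D$ with $\bu=\Exp(D)$ is $T$-invariant. One first checks that $T$ acts faithfully on $\ker D$: the kernel of the $T$-action on $\ker D$ fixes $\pi$ pointwise, hence acts along its fibres, and on a general (one-dimensional) fibre it commutes with a translation of $\CC$ and so is trivial. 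Then by \name{Miyanishi}'s theorem (and linearizability of torus actions on $\CC^2$) one may take $\ker D=\CC[v,w]$ with the $T$-weights $\psi_1,\psi_2$ of $v,w$ a basis of the character lattice, making $d$ and the plinth generator monomials in $v,w$. Over $\{vw\neq0\}$ the $\GGa$-action is a trivial principal bundle, so $\OO(\CC^3)[1/(vw)]=\CC[v^{\pm1},w^{\pm1},s]$ with $D=\partial_s$ and $s$ of $T$-weight $0$; expanding each $x_k$ in powers of $s$ over $\CC[v^{\pm1},w^{\pm1}]$ and using $T$-homogeneity, every coefficient is a scalar multiple of $v^{a_k}w^{b_k}$ where $\chi_k=a_k\psi_1+b_k\psi_2$, so $x_k=v^{a_k}w^{b_k}Q_k(s)$ for some $Q_k\in\CC[s]$.

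The main obstacle is to finish from this normal form. One must use that $v,w\in\CC[x_1,x_2,x_3]$, that $\CC[x_1,x_2,x_3]$ is a polynomial ring, and that $\Spec\CC[x_1,x_2,x_3]$ is smooth, to force (after reordering) $Q_3$ to be linear in $s$ and $(a_1,b_1)=(1,0)$, $(a_2,b_2)=(0,1)$, whence $D=v^ew^f\partial_{x_3}$ and $\bu=\Exp(D)$ is a modified translation with plinth divisor $\div(v^ew^f)$. Equivalently, this last step amounts to the statement that a locally nilpotent derivation of $\CC[x_1,x_2,x_3]$ invariant under a faithfully acting $2$-torus is a modified translation — a question about complexity-one $\mathbb{T}$-varieties whose core difficulty is already visible in the example $D=x_1\partial_{x_2}+x_2\partial_{x_3}$, where invariance forces $\chi_1=\chi_2=\chi_3$ and so collapses the rank of $T$ to at most one. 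I expect an honest proof to dispose of this by a finite analysis of the possible weight configurations $(\chi_1,\chi_2,\chi_3)$. An alternative route, through Theorems~\ref{unipotgeneral.mtm} and~\ref{unipotfence.mthm} — the $2$-torus injects into the reductive factor $R$ of $\Cent(\bu)$, which one must bound when $\bu$ is not a modified translation — meets the same essential point.
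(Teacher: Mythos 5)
Your sufficiency argument and the first half of the necessity argument are fine and parallel the paper: $p|_T$ is injective because $\ker p = \OO(\AA^3)^{\bu'}\cdot\bu'$ consists of unipotent elements, and linearizing the induced torus action on $\AA^3\aquot\GGa\simeq\CC^2$ forces $\Gamma$ to be $\div(v^iw^j)$. But the heart of the proposition is exactly the claim you leave open: that a unipotent $\bu=\Exp(D)$ commuting with a faithfully acting $2$-torus on $\CC^3$ must be a modified translation. You set up a normal form $x_k=v^{a_k}w^{b_k}Q_k(s)$ over the localization and then explicitly defer the conclusion to ``a finite analysis of the possible weight configurations'' which you do not carry out; as written, this is a genuine gap, not a routine verification, and your detour through $\CC[v^{\pm1},w^{\pm1},s]$ does not by itself produce the needed constraints on the $Q_k$ and the exponents $(a_k,b_k)$.

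For comparison, the paper closes precisely this step directly and briefly, without the localization: after diagonalizing the $T$-action on $\CC^3$ (Bia{\l}ynicki-Birula), each $D(x_i)$ is a semi-invariant of the same weight $\lambda_i$ as $x_i$ (since $T$ commutes with $\Exp(D)$ and $\Exp$ is injective, $D$ itself is $T$-invariant). Faithfulness makes the lattice spanned by $\lambda_1,\lambda_2,\lambda_3$ of rank $2$, and one splits into two cases: if the $\lambda_i$ are pairwise distinct, two of them lie outside the monoid generated by the remaining weights, so for those two indices every monomial of $D(x_k)$ is divisible by $x_k$, i.e. $D(x_k)\in x_k\OO(\CC^3)$, whence $D(x_k)=0$ by the standard fact that $D(f)\in f\OO(\CC^3)$ forces $D(f)=0$ for a locally nilpotent $D$; if instead $\lambda_1=\lambda_2\neq\lambda_3$, the same reasoning gives $D(x_3)=0$ and $D(x_1),D(x_2)\in\CC x_1\oplus\CC x_2$, so $D$ restricts to a nilpotent linear map there. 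In both cases $\bu$ is a modified translation. This is exactly the ``finite analysis of weight configurations'' you anticipate (your example $D=x_1\partial_{x_2}+x_2\partial_{x_3}$ is the right heuristic), so the missing piece is short --- but it is the decisive piece, and your proposal does not contain it.
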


\begin{proof}
	Assume that $\Cent(\bu)$ contains an algebraic subgroup 
	$T \simeq (\CC^\ast)^2$.
	By Proposition~\ref{modtranslation.prop} it follows
	that $T$ acts faithfully on $\CC^3 \aquot \bu$ and preserves the plinth
	divisor $\Gamma$. Hence, it follows from
	\cite{Bi1966Remarks-on-the-act} that
	there exist coordinates $(v, w)$ on $\CC^2 \simeq \CC^{3} \aquot \bu$
	such that $\Gamma$ is given by $v^i w^j$ for some integers $i, j$.
	By \cite{Bi1967Remarks-on-the-act} there exist coordinates
	$(x_1, x_2, x_3)$ of $\CC^3$ such that the action of $T$ is diagonal
	with respect to these coordinates. Hence there exist characters
	$\lambda_1$, $\lambda_2$, $\lambda_3$ of $T$ such that
	$\bt(x_1, x_2, x_3) = 
	(\lambda_1(\bt)x_1, \lambda_2(\bt)x_2, \lambda_3(\bt)x_3)$ for all 
	$\bt \in T$. Let $\bu = \Exp(D)$. By assumption we have for all
	$\bt \in T$ and $i = 1, 2, 3$
	\begin{equation}
		\label{Comm.eq}
		D(x_i) \circ \bt = \lambda_i(\bt) D(x_i).
	\end{equation}
	As the action of $T$ on $\CC^3$
	is faithful, the subgroup spanned by $\lambda_1, \lambda_2, \lambda_3$
	inside the characters of $T$ has rank $2$.
	Assume first that the $\lambda_i$ are pairwise different. 
	Then there exist at least two different indices $k_1, k_2 \in \{1, 2, 3 \}$ 
	such that $\lambda_{k_i}$ lies not in the monoid
	spanned by $\{ \, \lambda_l \ | \ l \neq k_i \, \}$. By symmetry we can 
	assume $k_1 = 1$, $k_2 = 2$.
	This implies that $D(x_i) \in x_i \CC[x_1, x_2, x_3]$ for $i = 1, 2$.
	Since $D$ is locally nilpotent we have $D(x_1)=D(x_2) = 0$.
	Hence, $\bu$ is a modified translation.	
	Assume now that $\lambda_1 = \lambda_2 \neq \lambda_3$
	(the other cases follow by symmetry). Thus, 
	$\lambda_3$ does not lie in the monoid spanned by $\lambda_1$
	and $\lambda_2$. Hence we get $D(x_3) \in x_3 \CC[x_1, x_2, x_3]$ and
	$D(x_1), D(x_2) \in \CC x_1 \oplus \CC x_2$. Since $D$ is locally nilpotent
	it follows that $D(x_3) = 0$ and the linear endomorphism
	$D |_{\CC x_1 \oplus \CC x_2}$ is nilpotent. This implies that $\bu$ is 
	a modified translation.
	
	The converse of the statement is clear.
\end{proof}


\subsection{\texorpdfstring{The second unipotent subgroup in $\Cent(\bu)$}
	          {The second unipotent subgroup in Cent(u)}}
\label{The_second_subgroup.subsec}
Let $\bid \neq \bu \in \Aut(\AA^3)$ be unipotent with standard decomposition 
$\bu = d \cdot \bu_0$. 
Throughout this subsection we assume that the plinth divisor $\Gamma = \div(a)$ 
of $\bu$ is a fence. There exists another subgroup of unipotent automorphisms 
inside $\Cent(\bu)$ in addition to $\OO(\AA^3)^{\bu_0} \cdot \bu_0$, that we describe
in this subsection.
\begin{lem}
	\label{fact.lem}
	Let $\bid \neq \bu \in \Aut(\AA^3)$ be unipotent. If the 
	plinth divisor $\Gamma = \div(a)$ is a fence,
	then there exists a variable $z$ of $\OO(\AA^3)^{\bu} = \OO(\CC^2)$ 
	such that $a \in \CC[z]$ and any such $z$ is a variable of 
	$\OO(\AA^3)$.
\end{lem}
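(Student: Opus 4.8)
The plan is to get the first assertion from Proposition~\ref{genAMS.prop} and the second from the theorem of Kaliman that a polynomial on $\CC^3$ whose general fibre is isomorphic to $\CC^2$ is a variable. Throughout write $\bu = \Exp(D)$, so that $\OO(\AA^3)^\bu = \ker D$; by Miyanishi's theorem this is a polynomial ring in two variables, and I identify $\AA^3 \aquot \GGa$ with $\AA^2 = \Spec(\ker D)$.

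For the first assertion I would begin by noting that every fence in $\AA^2$ is, as an abstract scheme, a product $F \times \CC$ with $F \subseteq \CC$ a $0$-dimensional closed subscheme. Indeed, if $\Gamma = \sum_i n_i \Gamma_i$ is a fence, then the ideal of the integral curve $\Gamma_i \simeq \AAone$ in the factorial ring $\OO(\AA^2)$ is a height-one prime, hence principal, say generated by an irreducible $g_i$; by the Abhyankar-Moh-Suzuki theorem there is an automorphism of $\AA^2$ turning $g_i$ into a coordinate, and then $n_i\Gamma_i = V(g_i^{n_i}) \simeq \Spec(\CC[t]/(t^{n_i})) \times \CC$. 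Taking for $F$ the disjoint union of these fat points, placed at distinct points of $\CC$, gives $\Gamma \simeq F \times \CC$. Now Proposition~\ref{genAMS.prop} supplies an automorphism of $\AA^2$ carrying $\Gamma$ scheme-theoretically onto $F \times \CC$, the latter being the divisor $\div(b(z))$ of a polynomial $b$ in one coordinate $z$ of $\OO(\AA^2) = \ker D$. Pulling $z$ back along this automorphism I obtain a variable $z$ of $\ker D$ with $\div(a) = \Gamma = \div(b(z))$; equality of these effective divisors forces $a$ and $b(z)$ to differ by a unit of $\OO(\AA^2)$, i.e. by a non-zero scalar, so $a \in \CC[z]$.

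For the second assertion, let $z$ be any variable of $\ker D = \OO(\AA^2)$ with $a \in \CC[z]$, and view $z$ as a regular function on $\AA^3$ through the inclusion $\ker D \subseteq \OO(\AA^3)$; then $z \colon \AA^3 \to \AAone$ factors as $\pi$ followed by a coordinate projection $\AA^2 \to \AAone$. Write $a = c\prod_i (z - c_i)^{n_i}$ with the $c_i$ pairwise distinct; since each $z - c_i$ is irreducible in $\ker D$, the support of $\Gamma = \div(a)$ is the union of the lines $\{z = c_i\}$ in $\AA^2$. Hence for every $c \in \CC$ with $a(c) \neq 0$ the line $L_c = \{z = c\} \simeq \AAone$ is disjoint from $\Gamma$, so $\pi$ restricts over $L_c$ to a trivial principal $\GGa$-bundle (using that $\pi$ restricts to a trivial principal $\GGa$-bundle over $\AA^3 \setminus \pi^{-1}(\Gamma)$, as recalled earlier), and therefore
\[
	z^{-1}(c) = \pi^{-1}(L_c) \simeq L_c \times \GGa \simeq \AAone \times \AAone \simeq \AA^2 \, .
\]
Thus the general fibre of $z \colon \AA^3 \to \AAone$ is isomorphic to $\AA^2$, and by Kaliman's theorem on polynomials with general $\CC^2$-fibres this forces $z$ to be a variable of $\OO(\AA^3)$.

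The heart of the matter, and the step I expect to be the main obstacle, is this last passage from a variable of the subring $\ker D \simeq \CC^{[2]}$ to a variable of the full ring $\OO(\AA^3) \simeq \CC^{[3]}$: in general a variable of a subring need not extend to a variable of the ambient polynomial ring, and no formal manipulation closes the gap. What rescues the situation is that the fence hypothesis confines $a$ to a single coordinate $z$ of the quotient, which together with the triviality of $\pi$ away from $\Gamma$ identifies every general fibre of $z$ with an affine plane; the substantive external input is then Kaliman's theorem. One could alternatively try to argue purely ring-theoretically --- $\OO(\AA^3)[1/a] \simeq \CC[z,1/a]^{[2]}$ and $\OO(\AA^3)$ is faithfully flat over $\CC[z]$, so one would like to invoke a trivialisation theorem for $\AA^2$-fibrations over $\AAone$ --- but that route forces one to control the fibres of $z$ over the roots of $a$, which is precisely the difficulty that Kaliman's theorem sidesteps.
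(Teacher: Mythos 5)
Your proposal is correct and follows essentially the same route as the paper: apply Proposition~\ref{genAMS.prop} to put $\Gamma$ into the standard position $F \times \AAone$ and deduce $a \in \CC[z]$, then use the triviality of $\pi$ over $\AA^2 \setminus \Gamma$ to see that all but finitely many fibres of $z \colon \AA^3 \to \AAone$ are isomorphic to $\AA^2$, and conclude with Kaliman's theorem. The extra details you supply (checking $\Gamma \simeq F \times \CC$ abstractly, and that the argument applies to \emph{any} variable $z$ with $a \in \CC[z]$) are fine and only make explicit what the paper leaves implicit.
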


\begin{proof}
	By Proposition~\ref{genAMS.prop} there exists a coordinate system 
	$(z, w)$ of $\AA^2$ such that the embedding 
	$\div(a) = \Gamma \subseteq \AA^2$ is given by the standard embedding
	$F \times \AAone \subseteq \AA^2$ for some 0-dimensional
	closed subscheme $F$ of $\AAone$. Thus $a \in \CC[z]$.
	Since the quotient map $\pi \colon \CC^3 \to \CC^3 \aquot \bu$ 
	is a trivial $\AAone$-bundle over $\AA^2 \setminus \Gamma$, 
	it follows that only finitely many fibers of 
	$z \colon \AA^3 \to \AAone$ are non-isomorphic
	to $\AA^2$. Thus $z$ is a variable of $\OO(\CC^3)$, according to 
	\name{Kaliman}'s Theorem \cite{Ka2002Polynomials-with-g}.
\end{proof}

\begin{rem}
	\label{rank2.rem}
	If $\bu = \Exp(D)$ is irreducible, then
	$\Gamma$ is a fence
	if and only if $\rank D \leq 2$ (i.e. there exists a variable $z$ of $\OO(\CC^3)$
	that lies in $\ker D$). This follows from the lemma above
	and from 
	\cite[Theorem~2.4, Proposition~2.3]{DaFr1998Locally-nilpotent-}.
\end{rem}

\begin{defn}
	Let $A$ be a UFD and let $P \in A[x, y]$. We denote
	\[
		\Delta_P = -P_y \frac{\partial}{\partial x}
				  + P_x \frac{\partial}{\partial y}
	\]
	where $P_x$ and $P_y$ denote the partial derivatives of $P$ with respect
	to $x$ and $y$ respectively. Obviously, $\Delta_P$ is an 
	$A$-derivation of $A[x, y]$ and $\Delta_P(P) = 0$.
\end{defn}

Let $D$, $D_0$ be locally nilpotent derivations of $\OO(\AA^3)$ 
such that $\bu = \Exp(D)$ and $\bu_0 = \Exp(D_0)$. 
Let $z \in \ker D$ be a variable such that $a \in \CC[z]$ and let
$(x, y, z)$ be a coordinate system of $\OO(\CC^3)$ (see Lemma~\ref{fact.lem}). 
Let $A = \CC[z]$.
It follows now from \cite[Theorem~2.4]{DaFr1998Locally-nilpotent-} that there exists 
$P \in A[x, y]$ such that
\[
	D_0 = \Delta_P \quad \textrm{and} \quad \ker D = \ker D_0 = \CC[z, P] \, .
\]
Obviously, $d$ divides $a$ in $\CC[z]$. 
Let $a = d a_0$. An easy calculation shows that $\div(a_0)$ is the plinth 
divisor of $\bu_0$ and that for all $Q \in A[x, y]$ we have
\begin{equation}
	\label{D_0.eq}
	D(Q) = a \quad \textrm{if and only if} \quad D_0(Q) = a_0 \, .
\end{equation}
By assumption $\Gamma = \div(a)$ is a fence and thus $a, a_0 \neq 0$.

\begin{lem}
	\label{AdmissibleCompl.lem}
	Let $A = \CC[z]$. If $Q \in A[x, y]$ such that $D(Q) = a$, then $E = \Delta_Q$
	is an irreducible locally nilpotent derivation.
	Moreover, $E$ commutes with $D$.
\end{lem}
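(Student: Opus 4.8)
We work in the setting of Lemma~\ref{AdmissibleCompl.lem}: $A = \CC[z]$, we have a coordinate system $(x, y, z)$ of $\OO(\CC^3)$, a polynomial $P \in A[x, y]$ with $D' = \Delta_P$ and $\ker D = \ker D' = A[P] = \CC[z, P]$, and a polynomial $Q \in A[x, y]$ with $D(Q) = a$; set $E = \Delta_Q$. First I would record the three things I need to prove: (1) $E$ is a locally nilpotent derivation of $\OO(\CC^3)$; (2) $E$ is irreducible; (3) $E$ commutes with $D$.

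\emph{Local nilpotence and the kernel of $E$.} For (1), the key point is that $\Delta_Q$ is an $A$-derivation of $A[x, y]$, so it suffices to show it is locally nilpotent on the two-variable polynomial ring $A[x, y] = \CC[z][x, y]$. I would apply \cite[Theorem~2.4]{DaFr1998Locally-nilpotent-} (the Daigle--Freudenburg classification of locally nilpotent derivations of $\CC[z][x, y]$ with kernel of the expected form), or alternatively argue directly: since $\Delta_Q(Q) = 0$, the ring $A[Q] = \CC[z, Q]$ lies in $\ker E$, and I would check that $Q$ is in fact a \emph{variable} of $A[x, y]$ over $A$ — this is where I use $D(Q) = a$. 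Indeed, $D$ restricts to a locally nilpotent $A$-derivation of $A[x, y]$ with $D(Q) = a \in A = \ker D$ and $a \ne 0$; by Lemma~(iv)/(v)-type reasoning, after localizing at $a$ we have $A_a[x, y] = (\ker D)_a[s]$ with $D(s) = 1$, and $Q = a s + (\text{element of }\ker D)$, so $Q$ is a coordinate of $A_a[x, y]$ over $(\ker D)_a$. Using the Daigle--Freudenburg structure of $\ker D = \CC[z, P]$ one upgrades this to: $\CC[z, Q]$ is a polynomial ring in $Q$ over $\CC[z]$, and $E = \Delta_Q$ has $\ker E = \CC[z, Q]$ and is locally nilpotent — this is precisely the content of \cite[Theorem~2.4]{DaFr1998Locally-nilpotent-} applied with the roles of the two "slices" interchanged.

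\emph{Irreducibility.} For (2), suppose $E = f E_0$ with $E_0$ locally nilpotent and $f \in \ker E_0 = \ker E = \CC[z, Q]$. Then $f$ divides both partial derivatives $Q_y$ and $Q_x$ in $\OO(\CC^3)$. I would derive a contradiction from $D(Q) = a$ being a generator of the plinth ideal of $\bu$: writing $a = D(Q) = -Q_y D(x) + Q_x D(y) + (\partial Q/\partial z)\cdot 0$ (using $z \in \ker D$, though more carefully $D$ acts on $A[x,y]$ as an $A$-derivation so $a = Q_x D(x) + Q_y D(y)$), we see $f \mid a$ in $\CC[z]$. But $a$ is a generator of the plinth ideal, and one checks (this is essentially the minimality built into the definition of the plinth ideal together with $\Gamma$ being a fence, so $a$ is, up to a constant, a product of distinct linear factors of $z$) that any such $f \in \CC[z]$ dividing $a$ and dividing $\gcd(Q_x, Q_y)$ must be a unit — otherwise one could modify $Q$ to get a smaller element in $\im D \cap \ker D$, contradicting that $a$ generates the plinth ideal. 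Alternatively and more cleanly: if $f$ were a non-unit, then $a/f \cdot$ (something) would lie in $\im D \cap \ker D$ and be a proper divisor of $a$, contradiction. I expect this to be the step requiring the most care.

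\emph{Commutation with $D$.} For (3), both $D$ and $E$ are $A$-derivations of $A[x, y]$, so $[D, E]$ is an $A$-derivation of $A[x, y]$, hence determined by its values on $x$ and $y$. A direct computation using the explicit formulas $D = (Dx)\partial_x + (Dy)\partial_y$ and $E = \Delta_Q = -Q_y \partial_x + Q_x \partial_y$, together with the identity $D(Q) = (Dx)Q_x + (Dy)Q_y = a$ (a nonzero element of $A$, hence killed by both $\partial_x$ and $\partial_y$ applied through... ), should give $[D, E] = 0$: differentiating $D(Q) = a$ with respect to $x$ and $y$ yields two linear relations among $(Dx)_x, (Dx)_y, (Dy)_x, (Dy)_y, Q_{xx}, Q_{xy}, Q_{yy}$ that are exactly what is needed to cancel all terms in $[D,E](x)$ and $[D,E](y)$. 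I would carry out this two-line computation explicitly; it is routine. Combining (1)--(3) gives the lemma.
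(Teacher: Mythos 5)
Your three-step plan is viable, and for irreducibility and commutation it genuinely differs from the paper (which proves local nilpotence and commutation by passing to $K=\CC(z)$, where $K[x,y]=K[P,Q]$, $D'=a'\,\partial/\partial Q$ and $E=-a'\,\partial/\partial P$, and proves irreducibility by invoking \cite[Theorem~2.4, Proposition~2.3]{DaFr1998Locally-nilpotent-} to write $E=h\Delta_S$ and evaluating on $P$ so that $\Delta_P(S)$ lands in the plinth ideal of $D'$). However, two steps of your write-up have concrete problems. First, in the irreducibility step the parenthetical claim that ``$\Gamma$ a fence $\Rightarrow$ $a$ is a product of distinct linear factors'' is false: fences are divisors $\sum_i n_i\Gamma_i$ with multiplicities, so $a$ need not be squarefree, and the first version of your argument leans on this. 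The step you yourself flag as ``requiring the most care'' is exactly the one left unproved; it can be repaired along the lines of your ``cleaner'' alternative (and then it closely parallels the paper's own argument in Proposition~\ref{properties.prop}~iv): from $E=fE_0$ one gets $f\mid Q_x$ and $f\mid Q_y$, hence $f\mid a=Q_xD(x)+Q_yD(y)$, so $f\in\CC[z]$ up to a scalar; if $f\notin\CC^\ast$ pick a root $z_0$ of $f$, then $Q(x,y,z_0)$ is a constant $c$, so $Q=c+(z-z_0)\tilde Q$ and $D(\tilde Q)=a/(z-z_0)\in\ker D$, contradicting that $a$ generates the plinth ideal. Note also that your assertion $\ker E=\CC[z,Q]$ is not established (you cannot quote \cite[Theorem~2.4]{DaFr1998Locally-nilpotent-} for $E$ before knowing $E$ is locally nilpotent), but it is not needed for this route; likewise, local nilpotence itself needs the explicit observation that on $\CC[z]_a[P,Q]$ one has $E=-a'\,\partial/\partial P$ (using $\Delta_Q(P)=-\Delta_P(Q)=-a'$ via \eqref{D'.eq}), which is essentially the paper's argument and not a consequence of the Daigle--Freudenburg theorem ``with slices interchanged.''

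Second, the commutation computation as described is incomplete: differentiating $D(Q)=a$ with respect to $x$ and $y$ does \emph{not} cancel all terms. Writing $u=D(x)$, $v=D(y)$, those relations give $[D,E](x)=Q_y(u_x+v_y)$ and $[D,E](y)=-Q_x(u_x+v_y)$, i.e. $[D,\Delta_Q]=-(u_x+v_y)\,\Delta_Q$. You additionally need the divergence condition $u_x+v_y=0$, which in this setting follows from $D=d\,\Delta_P$ with $d\in\CC[z]$ (so $u=-dP_y$, $v=dP_x$), or from the general fact that locally nilpotent derivations of polynomial rings have zero divergence. With that extra input your direct computation works; the paper avoids it entirely by computing in the coordinates $P,Q$ over $K$, where $D'$ and $E$ are visibly commuting and $E(d)=0$ gives $[dD',E]=0$.
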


\begin{proof}
	Let $K$ be the quotient field of $A$.
	The extension of $D_0$ to $K[x, y]$ satisfies $D_0(Q/a_0)= 1$.
	Thus $K[x, y] = K[P, Q]$. $E$ is non-zero, since $E(P) = -a_0 \neq 0$.
	If we extend $E$ to a 
	derivation of $K[x, y]$ one easily sees that 
	$E$ is locally nilpotent. Thus $E$ is a non-zero locally nilpotent derivation of 
	$A[x, y]$.
	
	By \cite[Theorem~2.4, Proposition~2.3]{DaFr1998Locally-nilpotent-}) 
	there exists $S \in A[x, y]$ and $0 \neq h \in A[P]$ such that 
	$E = h \Delta_{S}$ and $\Delta_{S}$ is irreducible. Thus
	$-a_0 = E(P) = h \Delta_{S}(P) = - h \Delta_P(S)$. Hence 
	$\Delta_P(S)$ lies in the plinth ideal of $\Delta_P$ and thus 
	$\Delta_P(S)$ is a multiple of $a_0$. This implies that
	$h \in \CC^\ast$ and proves that $E$ is irreducible.
	
	If we extend $\Delta_P$ and $\Delta_Q$ to $K[x, y] = K[P, Q]$, 
	we get $\Delta_P = a_0 (\partial / \partial Q)$ and
	$\Delta_Q = -a_0 (\partial / \partial P)$.
	Thus $E$ commutes with $D_0$. Since  $d \in \CC[z]$, 
	$E$ and $D = d D_0$ commute.
\end{proof}

\begin{defn}
For any $Q \in \OO(\CC^3)$ with $D(Q) = a$ we call
\[
	\e = \Exp(E) = \Exp(\Delta_Q) 
\]
an \emph{admissible complement to $\bu$}.
\end{defn}

By \eqref{D_0.eq}, we get that $\e$ is an admissible complement to $\bu$
if and only if $\e$ is an admissible complement to $\bu_0$.
It follows from Lemma~\ref{AdmissibleCompl.lem} that 
$\OO(\AA^3)^{\langle \e, \bu \rangle} \cdot \e$ is a subgroup of 
unipotent automorphisms inside $\Cent(\bu)$. 
    
\begin{rem}Ê$\textrm{}$
	\label{geom.rem}
	We have $\AA^2 \setminus \Gamma = U \times \AAone$
	for some non-empty open subset $U \subseteq \AAone$.
	The restriction of $\bu$ and of $\e$ to the open subset 
	\[
		\pi^{-1}(\AA^2 \setminus \Gamma) = 
		(U \times \AAone) \times \AAone = \Spec( \CC[z]_a[P, Q])
	\]
	are given by $(u, v, w) \mapsto (u, v, w + 1)$ and 
	$(u, v, w) \mapsto (u, v + 1, w)$ respectively, where $(u, v, w)$
	is the coordinate system $(z, -P/a_0, Q/a)$.
\end{rem}

\subsection{\texorpdfstring{The property (Sat)}
		 {The property (Sat)}}
We introduce in this subsection a property for a subset
$S \subseteq \Aut(\AA^n)$ and 
we will show that $\Cent(\bv)$ satisfies this property for any
unipotent automorphism $\bv \in \Aut(\AA^n)$.
This property will then play a key role when we describe the 
set of unipotent elements inside the centralizer. One can think of this property
as a saturation feature on the unipotent elements in $S$.

\begin{defn}
	Let $S \subseteq \Aut(\AA^n)$ be a subset.
	We say that $S$ has the \emph{property (Sat)}
	if for all unipotent $\w \in \Aut(\AA^n)$ 
	and for all $0 \neq f \in \OO(\AA^n)^{\w}$ we have
	\[
		\tag{Sat}
		f \cdot \w \in S \quad \Longrightarrow \quad \w \in S \, .
	\]
\end{defn}

\begin{prop}
	\label{propertyR.prop}
	If $\bv \in \Aut(\AA^n)$ is unipotent, then 
	the subgroup $\Cent(\bv) \subseteq \Aut(\AA^n)$ satisfies the property (Sat).
\end{prop}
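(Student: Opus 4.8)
The plan is to reduce the property (Sat) to a statement about Lie brackets of locally nilpotent derivations and then to prove that statement by induction on a ``reducibility defect'' of the unipotent automorphism involved. So fix a locally nilpotent derivation $D$ of $\OO(\AA^n)$ with $\bv = \Exp(D)$, let $\w = \Exp(E)$ be a unipotent automorphism (the case $\w = \bid$ being trivial), and let $0 \neq f \in \OO(\AA^n)^{\w} = \ker E$ with $f \cdot \w = \Exp(fE) \in \Cent(\bv)$; we must show $\w \in \Cent(\bv)$. The first step is the observation that a unipotent automorphism $\Exp(F)$ lies in $\Cent(\bv)$ if and only if $[F, D] = 0$. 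Indeed, if $[F,D] = 0$ then $\Exp(sF)$ and $\Exp(tD)$ commute for all $s, t \in \CC$, their pullbacks being the commuting exponential operators $\exp(sF)$, $\exp(tD)$; in particular $\Exp(F)$ commutes with $\bv$. Conversely, if $\Exp(F) \in \Cent(\bv)$, then, applying the remark that the centralizer of a $\GGa$-action coincides with the centralizer of its time-one map to the $\GGa$-action of $\bv$, the automorphism $\Exp(F)$ commutes with $\Exp(tD)$ for all $t$; applying that remark once more, now to the $\GGa$-action of $\Exp(F)$, gives that $\Exp(sF)$ and $\Exp(tD)$ commute for all $s, t$, and comparing the coefficients of $st$ in the pulled-back identity $\exp(sF)\exp(tD)(g) = \exp(tD)\exp(sF)(g)$ yields $[F,D] = 0$. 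Hence it suffices to prove: if $E$ is a non-zero locally nilpotent derivation of $\OO(\AA^n)$, $0 \neq f \in \ker E$ and $[fE, D] = 0$, then $[E, D] = 0$.

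A direct computation gives $[fE, D] = f\,[E,D] - D(f)\,E$, so the hypothesis reads $f\,[E,D] = D(f)\,E$. If $D(f) = 0$, then $f\,[E,D] = 0$ and hence $[E,D] = 0$, as $\OO(\AA^n)$ is a domain. So assume $D(f) \neq 0$. Since $D$ is locally nilpotent, $D(f)$ cannot lie in $f\,\OO(\AA^n)$, so there is a prime $\pi$ of $\OO(\AA^n)$ with $v_\pi(f) > v_\pi(D(f))$. As $f$ divides $D(f)\,E(g)$ for every $g \in \OO(\AA^n)$, the choice of $\pi$ forces $\pi$ to divide $E(g)$ for all $g$, i.e. $E = \pi E_1$ for a derivation $E_1$ of $\OO(\AA^n)$. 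Since $\pi$ divides $f \in \ker E$ and $\ker E$ is factorially closed, we get $\pi \in \ker E$; using this one checks that $\ker E_1 = \ker E$ and that $E_1$ is again locally nilpotent. Consequently $f \cdot \w = \Exp(fE) = \Exp\bigl((f\pi)\,E_1\bigr) = (f\pi)\cdot\w_1$, where $\w_1 = \Exp(E_1)$ is unipotent and $0 \neq f\pi \in \ker E_1 = \OO(\AA^n)^{\w_1}$, so that $(f\pi)\cdot\w_1 \in \Cent(\bv)$.

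To close the argument I would induct on $\ell(\w)$, the number of prime factors of the element $h$ in the decomposition $E = hE'$, where $E'$ is the (essentially unique) irreducible locally nilpotent derivation with $\ker E' = \ker E$ and $h \in \ker E'$. If $\ell(\w) = 0$, then $\w$ is irreducible, so the factorization $E = \pi E_1$ of the previous paragraph is impossible unless $D(f) = 0$ --- and then we are already done. If $\ell(\w) \geq 1$, then $\w_1$ satisfies $\ell(\w_1) = \ell(\w) - 1$ (one checks $h = \pi h_1$, where $E_1 = h_1 E'$), so the induction hypothesis applied to $\w_1$ with the invariant $f\pi$ gives $\w_1 \in \Cent(\bv)$, i.e. $[E_1, D] = 0$. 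Substituting this into $0 = [fE, D] = (f\pi)[E_1, D] - D(f\pi)\,E_1 = -D(f\pi)\,E_1$ yields $D(f\pi) = 0$, hence $f\pi \in \ker D$; as $\ker D$ is factorially closed and $\pi$ divides $f\pi$, we conclude $\pi \in \ker D$, i.e. $D(\pi) = 0$. Therefore $[E, D] = [\pi E_1, D] = \pi[E_1, D] - D(\pi)\,E_1 = 0$, so $\w \in \Cent(\bv)$, as required.

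The one genuine obstacle is the case $D(f) \neq 0$: commuting with the modification $fE$ does not obviously imply commuting with $E$. The resolution above rests on noticing that, when $D(f) \neq 0$, the derivation $E$ must itself be divisible by a prime factor $\pi$ of $f$; this both decreases the reducibility defect (which sets up the induction) and, once $\w_1 \in \Cent(\bv)$ is known, pins down $f\pi$ --- and hence $\pi$ --- inside $\ker D$, which is precisely what lets one upgrade $[E_1,D]=0$ to $[E,D]=0$. Everything else (that $E_1$ is locally nilpotent with $\ker E_1 = \ker E$, the equality $\ell(\w_1) = \ell(\w)-1$, and the manipulations with $\exp$) is a routine consequence of the basic properties of locally nilpotent derivations recalled earlier.
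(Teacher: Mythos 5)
Your proof is correct, and its engine is the same as the paper's: the bracket identity $[fE,D]=f[E,D]-D(f)E$, the fact that a locally nilpotent $D$ cannot satisfy $0\neq D(f)\in f\,\OO(\AA^n)$, and factorial closedness of kernels of locally nilpotent derivations. Where you genuinely diverge is in the treatment of a reducible $E$: you peel off one prime of the cofactor at a time and run an induction on the number of prime factors, together with the extra step $D(f\pi)=0\Rightarrow D(\pi)=0\Rightarrow[E,D]=[\pi E_1,D]=0$. The paper avoids any induction: it first settles the irreducible case (your base case, phrased there as ``$f$ divides $D(f)\cdot F(g)$ for all $g$, hence $f$ divides $D(f)$, hence $D(f)=0$''), and then for a general $F=f'F'$ with $F'$ irreducible it just notes that $fF=(ff')F'$ still commutes with $D$, so the irreducible case gives $D(ff')=0$ and factorial closedness yields $D(f)=0$, whence $f[F,D]=0$ and $[F,D]=0$. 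So the one-step reduction via the decomposition $F=f'F'$ buys a substantially shorter argument and makes your bookkeeping with $\ell(\w)$, $E_1$ and $D(\pi)$ unnecessary; on the other hand, your write-up has the mild merit of making explicit the equivalence $\Exp(F)\in\Cent(\bv)\Leftrightarrow[F,D]=0$ (via the one-parameter groups and comparison of the $st$-coefficient), which the paper uses tacitly when it writes $0=[fF,B]$.
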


\begin{proof}
	Let $\bv = \Exp(B)$ and let $\w = \Exp(F)$.
	Assume that 
	$f \cdot \w$ commutes with $\bv$ for some $\w$-invariant 
	$0 \neq f \in \OO(\AA^n)$. If $\bv = \bid$ or $\w = \bid$,
	then (Sat) is obviously satisfied. Thus we assume $\bv \neq \bid \neq \w$.
	For the Lie-bracket we have
	\begin{equation}
		\label{Lie-bracket.eq}
		0 = [fF, B] = f [F, B] - B(f) F \, .
	\end{equation}
	Thus, it is enough to prove that $B(f) = 0$.
	
	First, assume that $F$ is irreducible.
	By \eqref{Lie-bracket.eq}, it follows that $f$ divides $B(f) F(g)$
	for all $g \in \OO(\AA^n)$. As $F$ is irreducible, it follows that
	$f$ divides $B(f)$.
	Since $B$ is locally nilpotent, it follows that $B(f) = 0$.

	Now, let $F = f' F'$ for some irreducible $F'$. Thus, $f f' F'$ commutes with
	$B$ and by the argument above, $B(ff') = 0$. 
	Since $\ker B$ is factorially closed in $\OO(\AA^n)$, we have $B(f) =0$.
\end{proof}

\subsection{\texorpdfstring{The subgroup $N \subseteq \Cent(\bu)$}
		 {The subgroup N in Cent(u)}}
\label{TheSubgroupN.subsec}
Let $\bid \neq \bu \in \Aut(\AA^3)$ be unipotent.
We define in this subsection a subgroup $N$ of $\Cent(\bu)$ 
and we gather some facts about this group.
In the next subsection, we will prove that $N$ is exactly the set of unipotent
automorphisms $\Cent(\bu)$ if $\bu$ is not a translation.

\begin{defn} 
Let $\bid \neq \bu \in \Aut(\AA^3)$ be unipotent
with standard decomposition $\bu = d \cdot \bu_0$ and let 
$\Gamma$ be the plinth divisor of $\bu$. Let
\[
	 N = N(\bu) = \left\{ 
			\begin{array}{rl}
				\OO(\AA^3)^{\langle \e, \bu_0 \rangle} \cdot \e 
				\circ \OO(\AA^3)^{\bu_0} \cdot \bu_0 & 
				\textrm{if $\Gamma$ is a fence} \\
				\OO(\AA^3)^{\bu_0} \cdot \bu_0 & \textrm{otherwise.}
			\end{array}
		     \right.
\]
where $\e$ is an admissible complement to $\bu$
(cf. subsec.~\ref{The_second_subgroup.subsec}). Moreover, let
\[
	M = M(\bu) = \left\{ 
			\begin{array}{rl}
				(\ker E \cap \ker D_0) E + \ker(D_0) D_0 & 
				\textrm{if $\Gamma$ is a fence,} \\
				\ker(D_0) D_0 & \textrm{otherwise.}
			\end{array}
		     \right.
\]
where $\bu_0 = \Exp(D_0)$ and 
$\e = \Exp(E)$ (cf. Lemma~\ref{AdmissibleCompl.lem}).
\end{defn}

\begin{prop}
	\label{properties.prop}
	Let $\bid \neq \bu \in \Aut(\AA^3)$ be unipotent
	and assume it is not a translation. Then:
	\begin{enumerate}[i)]
		\item $N$ consists of 
		unipotent automorphisms and we have $N = \Exp(M)$.

		\item $N$
		normalizes $\OO(\AA^3)^{\bu_0} \cdot \bu_0$ 
		and we have for all $\g \in N$ and for all
		$f \in \OO(\AA^3)^{\bu_0}$
		\[
			\g^{-1} \circ f \cdot \bu_0 \circ \g = \g^\ast(f) \cdot \bu_0 \, .
		\]
		
		\item $N$ is a closed normal subgroup of $\Cent(\bu)$ that fits into
		the following split short exact sequence of ind-groups
		\[
				1 \to \OO(\AA^3)^{\bu_0} \cdot \bu_0 
				\hookrightarrow N 
				\stackrel{p|_N}{\longrightarrow}
				\Aut(\AA^2, \Gamma) \cap 
				\Iner(\AA^2, \Gamma_0)_u  
				\to 1 \, ,
		\]
		where $\Gamma_0$ is the plinth divisor of $\bu_0$.
		If $\Gamma$ is a fence, then the restriction of
		$p$ to ${\OO(\AA^3)^{\langle \bu_0, \e \rangle} \cdot \e}$ 
		is an isomorphism of ind-groups.
		In particular, $N$ is independent of the choice of $\e$.
		
		\item $N \subseteq \Aut(\AA^3)$ 
		satisfies the property (Sat).
	\end{enumerate}
\end{prop}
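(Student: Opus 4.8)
The plan is to prove the four items of Proposition~\ref{properties.prop} more or less in the order stated, treating the case ``$\Gamma$ is a fence'' as the substantial one (the ``otherwise'' case reduces everything to $\OO(\AA^3)^{\bu'}\cdot\bu'$, whose properties follow from Proposition~\ref{firstfam.prop} and Remark~\ref{ActionByConj.rem}). Throughout I fix a standard decomposition $\bu = d\cdot\bu'$, write $\bu'=\Exp(D')$, and—when $\Gamma$ is a fence—choose coordinates $(x,y,z)$ as in Lemma~\ref{fact.lem} with $z\in\ker D'$, $a\in\CC[z]$, $D'=\Delta_P$, $\ker D'=\CC[z,P]$, together with $Q$ satisfying $D(Q)=a$, $\e=\Exp(E)$, $E=\Delta_Q$, as produced by Lemma~\ref{AdmissibleCompl.lem}. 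Over the fraction field $K=\CC(z)$ we have $K[x,y]=K[P,Q]$ with $D'=a'\,\partial/\partial Q$ and $E=-a'\,\partial/\partial P$, and this ``flattened'' picture is what makes the whole computation transparent.

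For (i) the key point is that $\e$ and $\bu'$ ``commute up to a unipotent tail'': in the $K[P,Q]$ picture both $D'$ and $E$ are (scalar multiples by the constant-in-$P,Q$ element $a'$ of) partial derivatives in the two variables $P,Q$, so $[E,D']$ is a $K$-linear combination of $D'$ and $E$ with coefficients in $\CC[z]$; concretely $[E,D']=0$ after clearing the $z$-denominators appropriately, or more precisely one checks $E$ and $D'$ generate a two-dimensional abelian (or at worst nilpotent) Lie algebra over $\ker D'\cap\ker E$. First I would record that $\ker E\cap\ker D'=\CC[z]$ and that for $f\in\CC[z]$, $g\in\ker D'=\CC[z,P]$ the derivation $fE+gD'$ is locally nilpotent (it is a ``triangular'' combination in the $P,Q$-coordinates), hence $M=\Exp^{-1}(N)$ makes sense as a set of locally nilpotent derivations. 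Then the Baker–Campbell–Hausdorff / explicit-composition computation $\Exp(gD')\circ\Exp(fE)=\Exp(\,\text{element of }M\,)$—using that $[gD',fE]$ again lies in $M$ and that all higher brackets vanish or stay in $M$—gives $N=\Exp(M)$ and in particular that $N$ is closed under composition; closure under inverses is immediate since $-M=M$. I would be slightly careful to note $\OO(\AA^3)^{\langle\e,\bu'\rangle}=\ker E\cap\ker D'=\CC[z]$ so the two ``families'' $\CC[z]\cdot\e$ and $\ker(D')\cdot\bu'$ are exactly the exponentials of $(\ker E\cap\ker D')E$ and $\ker(D')D'$.

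For (ii), the normalization formula is a direct conjugation computation: for $\g=\Exp(G)$ with $G\in M$ one has $\g^{-1}\circ\Exp(fD')\circ\g=\Exp(\g^\ast(f)D')$ provided $\g^\ast$ fixes the relevant scalar, and since every $\g\in N$ fixes $d$ (indeed $\g^\ast(d)=d$ because $G\in M$ annihilates $z$ and hence $a$ and $d$, or one cites Remark~\ref{ActionByConj.rem} with $\mu(\g)=1$), the factor $\mu(\g)$ of Remark~\ref{ActionByConj.rem} is trivial, giving exactly the stated formula. Combined with (i) this shows $N$ is a subgroup; that $N\subseteq\Cent(\bu)$ is Lemma~\ref{AdmissibleCompl.lem} plus Proposition~\ref{firstfam.prop}. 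For (iii) I would run the image computation: $p$ kills $\OO(\AA^3)^{\bu'}\cdot\bu'$ by Proposition~\ref{firstfam.prop}, and by Remark~\ref{geom.rem} the induced automorphism of $\AA^3\aquot\GGa=\AA^2$ of $f\cdot\e$ (for $f\in\CC[z]$) is, in the coordinates $(u,v)=(z,-P/a')$, the shear $(u,v)\mapsto(u,v+f(u))$—this preserves $\Gamma=V(a)$ (a union of fibres of $u$) and acts as the identity on $\Gamma'=V(a')$ (since on $V(a')$ the shear collapses, as $a'$ is a fibre of $u$ too, wait—rather since $f\in\ker E$ the pullback to $\Gamma'$ is the identity; I'd verify this cleanly from $E(\text{coordinate on }\Gamma')=0$). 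Conversely any unipotent element of $\Aut(\AA^2,\Gamma)\cap\Iner(\AA^2,\Gamma')_u$ is such a shear $(u,v)\mapsto(u,v+f(u))$ with $f\in\CC[u]$ vanishing appropriately; lifting it via $f\mapsto f\cdot\e$ shows $p|_N$ is surjective onto that target and that $p|_{\CC[z]\cdot\e}$ is an isomorphism of ind-groups (the ind-structures match because both sides are, after choosing $v_1,v_2$ as in Remark~\ref{indhomo.rem}, cut out by the same polynomial conditions on the coefficients of $f$). Normality of $N$ in $\Cent(\bu)$: since $p(N)$ is the full $\Iner(\AA^2,\Gamma')_u$-part inside a subgroup that $\Cent(\bu)$ normalizes via $p$, and the kernel $\OO(\AA^3)^{\bu'}\cdot\bu'$ is normal by Proposition~\ref{firstfam.prop}, a short diagram chase gives normality of $N$; closedness follows since $N$ is the preimage under $p$ of a closed subgroup intersected with—no, more carefully, $N$ is the image of the closed map $\OO(\AA^3)^{\bu'}\cdot\bu'\rtimes(\CC[z]\cdot\e)\to\Cent(\bu)$, and I would identify $N$ as the preimage $p^{-1}\bigl(\Aut(\AA^2,\Gamma)\cap\Iner(\AA^2,\Gamma')_u\bigr)\cap\Cent(\bu)_u$-type locus, which is closed; splitting is exhibited by the section $f\mapsto f\cdot\e$. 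Independence of the choice of $\e$ follows because $p|_N$ restricted to $\CC[z]\cdot\e$ is an iso onto a subgroup depending only on $(\bu,\Gamma,\Gamma')$, and the kernel is fixed. Finally (iv): if $w\in\Aut(\AA^3)$ is unipotent and $f\cdot\w\in N$ for some $0\neq f\in\OO(\AA^3)^{\w}$, then $f\cdot\w\in\Cent(\bu)$, so by Proposition~\ref{propertyR.prop} we get $\w\in\Cent(\bu)$, hence $\w$ is a unipotent element of $\Cent(\bu)$; then $f\cdot\w=\Exp(f\,\Exp^{-1}(\w))$ and $\w=\Exp(\Exp^{-1}(\w))$ have the same underlying locally nilpotent derivation up to the factor $f$, and since $\Exp^{-1}(f\cdot\w)=f\cdot\Exp^{-1}(\w)\in M$ with $M$ factorially closed in the relevant sense (it is a $\CC[z]$-submodule closed under the divisibility coming from parts i)–ii)), one concludes $\Exp^{-1}(\w)\in M$, i.e. $\w\in N$. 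I expect the main obstacle to be item (iii): carefully matching the ind-group structures on $p|_{\CC[z]\cdot\e}$ and checking that the target is \emph{exactly} $\Aut(\AA^2,\Gamma)\cap\Iner(\AA^2,\Gamma')_u$—i.e. that no unipotent automorphism of $\AA^2$ preserving $\Gamma$ and inert on $\Gamma'$ fails to be one of these shears—which is where one leans on the classification of unipotent automorphisms of $\AA^2$ as $(x,y)\mapsto(x+d(y),y)$ and on the geometry of the fence $\Gamma$ from Proposition~\ref{genAMS.prop}.
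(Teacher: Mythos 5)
Your items (i)--(iii) follow essentially the same route as the paper (local nilpotency of $hE+fD'$ by the triangular structure, Baker--Campbell--Hausdorff to get $N\subseteq\Exp(M)$, Remark~\ref{ActionByConj.rem} with $\mu|_N=1$ for (ii), and for (iii) the identification of the image of $p|_N$ with the shears $(z,P)\mapsto(z,P-h(z)a'(z))$ and the section $h\mapsto h\cdot\e$). Two smaller points there: for $N=\Exp(M)$ you also need the reverse inclusion $\Exp(M)\subseteq N$, i.e.\ that $\Exp(hE+fD')$ factors as $\Exp(hE)\circ\Exp(gD')$, which the paper gets from Lemma~\ref{identity.lem} and which your ``explicit-composition'' remark does not yet cover; and in (iii) your fallback description of $N$ as a preimage intersected with a ``$\Cent(\bu)_u$-type locus'' is both unnecessary and dangerous, since closedness of $\Cent(\bu)_u$ is only proved later (Theorem~\ref{main.thm2}) and uses this very proposition --- the clean statement is $N=p^{-1}\bigl(\Aut(\AA^2,\Gamma)\cap\Iner(\AA^2,\Gamma')_u\bigr)$, which follows from Proposition~\ref{firstfam.prop} once surjectivity of $p|_N$ onto that target is established.

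The genuine gap is item (iv). After correctly observing that $\Exp^{-1}(f\cdot\w)=f\,W\in M$ (with $\w=\Exp(W)$), you assert that $M$ is ``factorially closed in the relevant sense (a $\CC[z]$-submodule closed under the divisibility coming from parts i)--ii))'' and conclude $W\in M$. Nothing in (i)--(ii) gives this, and it is exactly the hard technical heart of the statement. The paper proves it by reducing to the claim that $h_0E+f_0D'$ is an \emph{irreducible} locally nilpotent derivation whenever $\gcd(h_0,f_0)=1$ in $\CC[z,P]$ (then $W=b(h_0E+f_0D')$ with $b\in\ker$, and $gb=\gcd(h,f)\in\CC[z]$ forces $b\in\CC[z]$, hence $W\in M$). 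That irreducibility claim in turn requires a nontrivial argument: writing $hE+fD'=\Delta_F$ for an explicit $F=hQ+fP-\int(\partial f/\partial P)P\ud P$, showing that any non-unit common factor $b$ must divide $a'$, reducing modulo $z-z_0$ at a root $z_0$ of $a'$, using that $P(x,y,z_0)$ is non-constant (irreducibility of $D'=\Delta_P$, via the fixed-point-set argument), and finally deriving a contradiction with the fact that $a'$ \emph{generates} the plinth ideal of $D'$. Your proposal contains no substitute for this chain of arguments; also note that in (Sat) the cofactor $f$ lies only in $\ker W$, not a priori in $\CC[z,P]$, so even the framing of ``divisibility inside $M$'' needs the paper's gcd-splitting step before it makes sense. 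Citing Proposition~\ref{propertyR.prop} only places $\w$ in $\Cent(\bu)$, which is far from $\w\in N$.
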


\begin{proof}
	Assume first that
	$\Gamma$ is not a fence. Then i) ii) and iv) are clear, iii) follows from
	Proposition~\ref{curves.prop}. Thus we can assume that
	$\Gamma$ is a fence.
	\begin{enumerate}[i)]
		\item Let $hE + fD_0 \in M$. By induction on $l \geq 1$ one sees that 
		$(hE + fD_0)^l$ is a sum of terms of the form $g E^i (D_0)^j$
		where $g \in \ker D_0$. From this fact, one can deduce that $hE + fD_0$
		is locally nilpotent and hence $M$ consists only of locally nilpotent 
		derivations. 
		
		
		For all $f \in \ker D_0$ and $h \in \ker D_0 \cap \ker E$ and $q \geq 0$ 
		we have 
		\[
			fD_0 \ad(hE)^{q} 
			= (-1)^q h^qE^q(f) D_0
		\]
		where $A\ad(B) = [A, B]$.
		With the aid of this formula, an application of the 
		Baker-Campbell-Hausdorff formula yields 
		$\Exp(hE) \circ \Exp(fD_0) \in M$
		(see \cite[Proposition 1, \S 5, chp. V]{Ja1962Lie-algebras}).
		Hence $N \subseteq \Exp(M)$ which shows in particular, that 
		$N$ consists of unipotent automorphisms. 
		Moreover, 
		$\exp hE$ and $\exp(fD_0 + hE)$ coincide
		on $\ker D_0$. 
		Lemma~\ref{identity.lem} implies
		$(\Exp hE)^{-1} \circ \Exp(fD_0 + hE) = \Exp(g D_0)$ for some
		$g \in \ker D_0$ and thus $\Exp(M) \subseteq N$.		
				
		\item This follows from Remark~\ref{ActionByConj.rem}.
		
		\item 
		One can check that 
		$N = p^{-1}(\Aut(\AA^2, \Gamma) \cap \Iner(\AA^2, \Gamma_0)_u)$ by
		using Proposition~\ref{firstfam.prop}.
		Since $\Aut(\AA^2, \Gamma) \cap \Iner(\AA^2, \Gamma_0)_u$ is a 
		closed normal subgroup of
		$\Aut(\AA^2, \Gamma) \cap \Aut(\AA^2, \Gamma_0)$ 
		it follows that $N$ is a closed normal 
		subgroup of $\Cent(\bu)$.
		
		It is enough to show that the homomorphism
		$\OO(\AA^3)^{\langle \e, \bu_0 \rangle} \cdot \e \to 
		\Aut(\AA^2, \Gamma) \cap \Iner(\AA^2, \Gamma_0)_u$ (induced by $p$)
		is an isomorphism of ind-groups. Injectivity follows from the fact that
		$\OO(\AA^3)^{\langle \e, \bu_0 \rangle} \cdot \e \cap 
		\OO(\AA^3)^{\bu_0} \cdot \bu_0 = \{ \bid \}$ 
		and surjectivity follows from a straightforward calculation, by using that
		$\Gamma$ is non-empty. The inverse map is clearly a morphism. 
		
		\item Let $0 \neq hE + fD_0 \in M$. It is enough to prove that
		\begin{equation}
			\label{triangle}
			\gcd(h, f) = 1 \quad \Longrightarrow \quad
			\textrm{$hE + fD_0$ is irreducible}
		\end{equation}
		where the greatest common divisor
		is taken in the polynomial ring $\ker D_0 = \CC[z, P]$
		(we use the notation of subsec.~\ref{The_second_subgroup.subsec}).
		Indeed, let $g B = hE + fD_0 \in M$ for some
		locally nilpotent derivation
		$B \neq 0$ and some $0 \neq g \in \ker B$ and let $h = \gcd(h, f) h_0$,
		$f = \gcd(h, f) f_0$. Thus $B$ vanishes on $\ker(h_0 E + f_0 D_0)$ and
		since $h_0 E + f_0 D_0$ is irreducible, there exists 
		$b \in \ker(h_0 E + f_0 D_0)$
		such that $B = b (h_0 E + f_0 D_0)$. 
		This implies $g b = \gcd(h, f) \in \CC[z]$
		and therefore $b \in \CC[z]$. This shows that $B \in M$.
		
		Let us prove \eqref{triangle}.
		Since $E$ and $D_0$ are irreducible 
		(see Lemma~\ref{AdmissibleCompl.lem}) we can
		assume that $h$ and $f$ both are non-zero. A calculation shows
		\[
			h E + f D_0 = \Delta_{F} \, , \quad 
			F = hQ + fP -Ê\int \left( \frac{\partial f}{\partial P} P \right) \ud P
		\]
		where the integration is taken inside the polynomial ring 
		$\ker D_0 =\CC[z, P]$ and $\Delta_F$
		is taken with respect to $A[x, y]$ where $A = \CC[z]$. 
		Let $f = \sum_{i=0}^n f_i(z) P^i$. Thus we have
		\[
			f P - \int \left( \frac{\partial f}{\partial P} P \right) \ud P =
			\sum_{i=0}^n f_i(z) \left( 1 - \frac{i}{i+1} \right) P^{i+1} \, .
		\]
		Denote this last polynomial by $G \in \CC[z, P]$.
		
		Now, assume towards a contradiction that $hE + fD_0$ is not irreducible.
		Hence, we have $hE + fD_0 = bB$ for some locally 
		nilpotent derivation $B$ and some 
		non-constant $b \in \ker B$. By plugging in 
		$P$ and $Q$ in $hE + fD_0 = bB$ 
		and using the fact that $\gcd(h, f) = 1$ 
		we see that $b$ divides $a_0$
		(recall that $D_0(Q) = a_0$ and $E(P) = -a_0$).
		Hence there exists a root $z_0$ of $a_0$ such that the induced
		derivation of $\Delta_F = hE + fD_0$ on 
		$\CC[x, y, z] / (z-z_0) \simeq \CC[x, y]$
		vanishes. Thus, there exists a constant $c \in \CC$ such that
		\begin{equation}
			\label{square}
			h(z_0) Q(x, y, z_0) + 
			\sum_{i = 0}^n 
			f_i(z_0) \left( 1 - \frac{i}{i+1} \right) P^{i+1}(x, y, z_0) =  c \, . 
		\end{equation}
		The polynomial $P(x, y, z_0) \in \CC[x, y]$ is non-constant,
		since otherwise $\bu = \Exp(\Delta_P)$ would have a 
		two-dimensional fixed point set, 
		contradicting the irreducibility
		(cf. \cite[2.10]{Da2007On-polynomials-in-}).
		If $h(z_0) = 0$, then we have $f(z_0, P) = 0$ by \eqref{square}.
		Hence $\gcd(h, f) \neq 1$, a contradiction. Thus we can assume
		$h(z_0) \neq 0$. It follows that
		$Q + h(z_0)^{-1} (G(z, P) - c)$ is divisible by $z - z_0$ 
		inside $\OO(\CC^3)$. Thus,
		\[
			D_0 \left( \frac{Q + h(z_0)^{-1}(G(z, P)-c)}{z-z_0} \right) = 
			\frac{a_0}{z-z_0} \, .
		\]
		But this contradicts the fact, that $a_0$ is a generator
		of the plinth ideal of $D_0$.
	\end{enumerate}
\end{proof}

\subsection{\texorpdfstring{The group $\Cent(\bu)$ as a semi-direct product}
		{The group Cent(u) as a semi-direct product}}
In this subsection, we prove our first main result: There exists an algebraic 
subgroup $R \subseteq \Cent(\bu)$ such that $\Cent(\bu)$
is the semi-direct product of $N$ with $R$, if $\bu$ is not a translation.

\begin{thm}
	\label{main.thm1}
	Let $\bu \in \Aut(\AA^3)$ be unipotent and assume that
	$\bu$ is not a translation. Then the subgroup 
	$N \subseteq \Cent(\bu)$ is closed and normal, and
	there exists an algebraic subgroup $R \subseteq \Cent(\bu)$
	such that $\Cent(\bu) \simeq N \rtimes R$
	as ind-groups. Moreover, all elements of $\Cent(\bu)$ are algebraic.
\end{thm}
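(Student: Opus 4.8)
The plan is to first dispose of the easy assertions and then build the group $R$ by hand. That $N$ is a closed normal subgroup is exactly Proposition~\ref{properties.prop}~iii), so nothing is to be done there. For the existence of $R$ I would split according to whether $\Gamma$ is a fence. When $\Gamma$ is \emph{not} a fence, Proposition~\ref{properties.prop}~iii) together with the exact sequence \eqref{3dim.eq} shows that $p$ maps $\Cent(\bu)$ into $\Aut(\AA^2,\Gamma,\Gamma')$ with kernel $N=\OO(\AA^3)^{\bu'}\cdot\bu'$, and this $\Aut(\AA^2,\Gamma)$ is an algebraic group by Proposition~\ref{curves.prop}~ii) (not a fence $\Rightarrow$ $\Aut(\AA^2,\Gamma)$ algebraic). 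In that situation the modified-translation analysis of Proposition~\ref{modtranslation.prop} — more precisely the construction in its proof of a closed lift $H\subseteq\Cent(\bu)$ of $\Aut(\AA^2,\Gamma)$ — is what I would imitate, or rather invoke, provided $\bu$ is a modified translation; but when $\Gamma$ is not a fence $\bu$ need not be a modified translation, so here one genuinely has to produce a lift. I would use Remark~\ref{indhomo.rem}: $p$ is a morphism of ind-groups via a fixed retraction $r\colon\OO(\AA^3)\to\OO(\AA^3)^{\bu}$, and the obstruction to lifting a given $\h\in\Aut(\AA^2,\Gamma,\Gamma')$ to $\Cent(\bu)$ lives in the abelian group $\OO(\AA^3)^{\bu'}$ with the affine action of Remark~\ref{ActionByConj.rem}; since $\Aut(\AA^2,\Gamma)$ is an algebraic group acting, the relevant cohomology (a cocycle valued in a union of finite-dimensional representations) vanishes and one gets an algebraic splitting $R$.

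When $\Gamma$ \emph{is} a fence, $N$ is the larger group $\OO(\AA^3)^{\langle\e,\bu'\rangle}\cdot\e\circ\OO(\AA^3)^{\bu'}\cdot\bu'$, and Proposition~\ref{properties.prop}~iii) identifies $p(N)$ with $\Aut(\AA^2,\Gamma)\cap\Iner(\AA^2,\Gamma')_u$. The quotient $\Cent(\bu)/N$ then injects, via $p$, into $\Aut(\AA^2,\Gamma,\Gamma')/(\Aut(\AA^2,\Gamma)\cap\Iner(\AA^2,\Gamma')_u)$; by Proposition~\ref{curves.prop}~i) every element of $\Aut(\AA^2,\Gamma)$ is algebraic, and the point is that modulo the unipotent-inertia part what survives is a group of \emph{semi-simple} type. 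So I would first show $p(\Cent(\bu))\subseteq\Aut(\AA^2,\Gamma,\Gamma')$ is an algebraic subgroup (closedness from Proposition~\ref{curves.prop}~i), finite-dimensionality because $\Aut(\AA^2,\Gamma)$ acts on the finite-dimensional space of divisors bounding $\Gamma$, $\Gamma'$), then appeal to the structure of algebraic groups acting on $\AA^2$ — a Levi-type decomposition — to extract a reductive complement $\bar R$ to the unipotent radical inside $p(\Cent(\bu))$, and finally lift $\bar R$ to $R\subseteq\Cent(\bu)$ by the same cocycle-vanishing argument as above, now using that $\bar R$ is reductive and acts linearly-reductively on the relevant finite-dimensional pieces of $\OO(\AA^3)^{\bu'}$. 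The semi-simplicity of the elements of $R$ then follows because $R$ maps isomorphically onto a reductive group that, together with the faithful linear action on $\ker D'$ coming from $\mu$ and the grading, forces each element to be diagonalizable.

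For the last clause — all elements of $\Cent(\bu)$ are algebraic — I would argue: given $\g\in\Cent(\bu)$, write $\g=\bn\cdot\br$ under $\Cent(\bu)\simeq N\rtimes R$ with $\bn\in N$, $\br\in R$. By Proposition~\ref{properties.prop}~i), $N=\Exp(M)$ consists of unipotent automorphisms, and $\br$ is algebraic since $R$ is an algebraic group; the product is algebraic because the unipotent and semi-simple parts generate an algebraic subgroup — concretely, $\overline{\langle\g\rangle}$ is contained in the algebraic subgroup $W\cdot\bu'$-type group spanned by $\br$ and finitely many $\br$-translates of the relevant modification factor, exactly as in Remark~\ref{AlgebraicElements.rem}, whose argument (span of $\br^\ast(f)$ over $\br\in R$ is finite-dimensional) carries over verbatim once one knows $R$ is algebraic and $N=\Exp(M)$. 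The main obstacle, I expect, is the construction of $R$ when $\Gamma$ is a fence: one must simultaneously control the image $p(\Cent(\bu))$ inside the \emph{infinite-dimensional} $\Aut(\AA^2,\Gamma,\Gamma')$, show it is algebraic, produce a Levi complement there, and then lift it compatibly with the semi-direct structure — the lifting being delicate because the fibres of $p$ over non-unipotent elements are genuine affine spaces on which $R$ acts, so one needs the right vanishing of obstructions, and one must check the whole construction is independent of auxiliary choices.
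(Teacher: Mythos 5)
Your reduction of ``closed and normal'' to Proposition~\ref{properties.prop}~iii) is fine, but the core of the theorem --- constructing the algebraic complement $R$ and deducing algebraicity of all elements --- is not actually established by your plan. Two concrete gaps. First, your construction of $R$ rests on two unproved assertions: that $p(\Cent(\bu))$ is an algebraic subgroup of $\Aut(\AA^2,\Gamma,\Gamma')$, and that the resulting extension by the vector group $\OO(\AA^3)^{\bu'}$ splits because ``the relevant cohomology vanishes''. Controlling the image of $p$ is precisely the difficulty the paper flags ($p$ is in general not surjective), and in the fence case your assertion is false as stated: already $p(N)=\Aut(\AA^2,\Gamma)\cap\Iner(\AA^2,\Gamma')_u$ is an infinite-dimensional vector group (all $(z,w)\mapsto(z,w+h(z)a(z))$ occur), so $p(\Cent(\bu))$ cannot be algebraic --- at best a quotient of it is, and no Levi-type theory for such subgroups of the non-algebraic ind-group $\Aut(\AA^2,\Gamma)$ is available off the shelf. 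The cohomological splitting is also not automatic: for a non-reductive group acting on a vector group the relevant $H^2$ need not vanish (the Heisenberg group is a non-split extension of the vector group $(\CC^+)^2$ by $\CC^+$), so an actual argument is required, and none is given. Your proof of the final clause (every element of $\Cent(\bu)$ is algebraic) then inherits this gap, since it presupposes the semi-direct decomposition.

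The paper takes a different route that avoids both problems. It first proves the theorem when $\bu$ is a modified translation, where everything is explicit: if $\Gamma$ is not a fence one takes the closed lift $H=\{\,\sigma=(\lambda x,\f(y,z))\,\}$ of $\Aut(\AA^2,\Gamma)$ from Proposition~\ref{modtranslation.prop}; if $\Gamma$ is a fence one uses the explicit split sequence $1\to\Aut(\AA^2,\Gamma)_u\to\Aut(\AA^2,\Gamma)\to\CC^\ast\times\Aut(\AAone,V(a))\to1$ and the identification $\OO(\AA^3)^{\langle\e,\bu'\rangle}\cdot\e\simeq\Aut(\AA^2,\Gamma)_u$ to get $R\simeq\CC^\ast\times\Aut(\AAone,V(a))$, with algebraicity of all elements by Remark~\ref{AlgebraicElements.rem}. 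The general case is then \emph{reduced} to this one: writing $\OO(\AA^3)^{\bu}=\CC[\tilde y,\tilde z]$ and choosing $\tilde x$ with $\bu^\ast(\tilde x)=\tilde x+a$, the inclusion $\CC[\tilde x,\tilde y,\tilde z]\subseteq\OO(\AA^3)$ gives an injective ind-group homomorphism $\eta\colon\Cent(\bu)\to\Cent(\tilde{\bu})$ into the centralizer of the modified translation $\tilde{\bu}$; algebraicity of $R=\eta^{-1}(\tilde R)$ (and of every element of $\Cent(\bu)$) comes not from a cocycle argument but from the locally-finite-action criterion \cite[Lemma~3.6]{KrSt2013On-Automorphisms-o}, and the splitting descends because $N(\bu)\to N(\tilde{\bu})$ is an isomorphism of groups. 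Without this reduction, or a genuine proof of the algebraicity of the relevant image and of the splitting, your plan does not yield the theorem.
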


We prove the result for modified translations and reduce the general case to it.

\begin{proof}[Proof for a modified translation]
	Let $\bu = d \cdot \bu_0$ be a standard decomposition. There exists 
	a coordinate system $(x, y, z)$ such that 
	$\bu_0(x, y, z) = (x + 1, y, z)$ and
	$d \in \CC[y, z] \setminus \CC$.
		
	If $\Gamma$ is not a fence, then 
	it follows from Proposition~\ref{curves.prop} 
	that $\Aut(\AA^2, \Gamma)$ is an algebraic group. By 
	Proposition~\ref{modtranslation.prop} there exists a 
	closed subgroup $R$ of $\Cent(\bu)$ that is mapped via 
	$p \colon \Cent(\bu) \twoheadrightarrow \Aut(\AA^2, \Gamma)$
	isomorphically onto $\Aut(\AA^2, \Gamma)$ and 
	$\Cent(\bu) \simeq N \rtimes R$.
	
	Now, assume that $\Gamma = \div(a)$ is a non-empty fence.
	By Proposition~\ref{genAMS.prop} there exist coordinates $(y, z)$ of 
	$\AA^2 = \AA^3 \aquot \bu$ such that $a \in \CC[z]$.
	Thus we have a split short exact sequence of ind-groups
	\[
		1 \to \Aut(\AA^2, \Gamma)_u 
		   \hookrightarrow \Aut(\AA^2, \Gamma)
		   \stackrel{q}{\to} 
		   \CC^{\ast} \times \Aut(\AAone, V(a)) 
		   \to 1
	\]
	where $q$ sends an automorphism 
	$(y, z) \mapsto (\lambda y + h, \alpha z + \beta)$ to 
	$(\lambda, z \mapsto \alpha z + \beta)$.
	Let $R$ be the algebraic group $\CC^{\ast} \times \Aut(\AAone, V(a))$.
	Since $N$ is generated by $\OO(\AA^3)^{\bu_0} \cdot \bu_0$ 
	and $\OO(\AA^3)^{\langle \e, \bu_0 \rangle} 
	\cdot \e \simeq \Aut(\AA^2, \Gamma)_u$ 
	(see Proposition~\ref{properties.prop}),
	we have the desired split short exact sequence of ind-groups
	\[
		1 \to N \hookrightarrow 
		\Cent(\bu) \stackrel{q \circ p}{\longrightarrow}
		R \to 1 \, .
	\]	
	By Remark~\ref{AlgebraicElements.rem}, every element of 
	$\Cent(\bu)$ is algebraic.
\end{proof}

\begin{proof}[Proof in the general case]
	Let $\OO(\AA^3)^{\bu} = \CC[\tilde{y}, \tilde{z}]$ and let 
	$\tilde{x} \in \OO(\CC^3)$ 
	such that $\bu^\ast(\tilde{x}) = \tilde{x} + a$, where
	$\Gamma = \div(a)$.
	Let $\tilde{\bu}  \in \Aut(\AA^3)$ be given by 
	$\tilde{\bu}(\tilde{x}, \tilde{y}, \tilde{z}) = (\tilde{x} + a, \tilde{y}, \tilde{z})$ 
	where we interpret
	$a$ as a polynomial in $\tilde{y}$ and $\tilde{z}$. 
	The morphism $\AA^3 \to \AA^3$
	induced by the inclusion 
	$\CC[\tilde{x}, \tilde{y}, \tilde{z}] \subseteq \OO(\AA^3)$ 
	is birational and thus we get an injective group homomorphism
	\[
		\eta \colon \Cent(\bu) \longrightarrow \Cent(\tilde{\bu}) \, .
	\]
	In fact, $\eta$ is a homomorphism of ind-groups, due to the following
	commutative diagram, where 
	$r \colon \OO(\CC^3) \twoheadrightarrow \CC[\tilde{x}, \tilde{y}, \tilde{z}]$ 
	is a $\CC$-linear retraction
	\[
		\xymatrix{
			\End(\CC^3) \ar[rrrr]^-
			{\g \mapsto (\g^\ast(\tilde{x}), \g^\ast(\tilde{y}), 
			\g^\ast(\tilde{z}))}_-{\textrm{morph.}} 
			&&&& \OO(\CC^3)^3 
			\ar@{->>}[rr]^-{r \times r \times r}_-{\textrm{lin.}} 
			&& \CC[\tilde{x}, \tilde{y}, \tilde{z}]^3 \\
			\Cent(\bu) \ar@{^(->}[u]_-{\textrm{loc. closed}} 
			\ar[rrrrrr]^-{\eta} &&&&&& \Cent(\tilde{\bu})
			\ar@{^(->}[u]^-{\textrm{loc. closed}}
		}
	\]	
	According to the first case, $\Cent(\tilde{\bu})$ is the semi-direct 
	product of $N(\tilde{\bu})$ with some algebraic subgroup 
	$\tilde{R} \subseteq \Cent(\tilde{\bu})$. Let $H \subseteq \tilde{R}$ be
	an algebraic subgroup. We claim that 
	$\eta^{-1}(H) \subseteq \Cent(\bu)$ is an algebraic subgroup.
	Since $\eta \colon \Cent(\bu) \to \Cent(\tilde{\bu})$ is a 
	homomorphism of ind-groups, it
	follows that $\eta^{-1}(H)$ is a closed subgroup. As $H$ is algebraic
	and thus acts locally finite on $\AA^3$, it follows 
	that $\eta^{-1}(H)$ acts also locally finite on $\AA^3$ by
	\cite[Lemma~3.6]{KrSt2013On-Automorphisms-o}. This implies the claim.

	According to the claim all elements of $\Cent(\bu)$ are algebraic
	and $R = \eta^{-1}(\tilde{R})$ is algebraic as well.
	Since $\eta$ is an injective homomorphism of ind-groups we have the 
	following commutative diagram
	\[
		\xymatrix{
			1 \ar[r] & N(\tilde{\bu}) \ar@{^(->}[r] 
			& \Cent(\tilde{\bu}) 
			\ar@{->>}[r] & \tilde{R} \ar[r] & 1 \\
			1 \ar[r] & N(\bu) \ar[u]^-{\textrm{iso. of groups}} 
			\ar@{^(->}[r] & 
			\Cent(\bu) \ar[u]_{\eta} \ar@{->>}[r] & R
			\ar@{^(->}[u]_-{\textrm{cl. embedd.}} \ar[r] & 1
		}
	\] 
	As the first column is a split short exact sequence of 
	ind-groups, the second coloumn is also a split short exact sequence of 
	ind-groups. This proves the theorem.
\end{proof}

\subsection{\texorpdfstring{The unipotent elements of $\Cent(\bu)$}
		 {The unipotent elements of Cent(u)}}
The goal of this subsection is to prove our second main result: 
The unipotent elements of $\Cent(\bu)$ are exactly $N$ provided $\bu$
is not a translation (see subsec.~\ref{TheSubgroupN.subsec} 
for the definition of $N$).
As we know from Proposition~\ref{properties.prop} the set $N$ 
satisfies the property (Sat). This
will be a key ingredient in the proof.

\begin{thm}
   	\label{main.thm2}
	Let $\bid \neq \bu \in \Aut(\AA^3)$ be unipotent and assume
	it is not a translation. Then the
	set of unipotent elements of $\Cent(\bu)$ is equal to $N$.
\end{thm}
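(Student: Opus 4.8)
The plan is to establish the inclusion $\Cent(\bu)_u \subseteq N$; the reverse inclusion is already known, since Proposition~\ref{properties.prop}~i) says $N$ consists of unipotent automorphisms commuting with $\bu$. So let $\w = \Exp(F) \in \Cent(\bu)$ be unipotent. First I would dispose of the easy case where the plinth divisor $\Gamma$ is not a fence: then by Proposition~\ref{curves.prop}~ii) the group $\Aut(\AA^2,\Gamma)$ contains no nontrivial unipotent elements, so $p(\w) = \bid$ and hence $\w \in \OO(\AA^3)^{\bu'}\cdot\bu' = N$ by Proposition~\ref{firstfam.prop}. Thus I may assume from now on that $\Gamma = \div(a)$ is a non-empty fence, fix the coordinates $(x,y,z)$ and the admissible complement $\e = \Exp(E) = \Exp(\Delta_Q)$ from subsection~\ref{The_second_subgroup.subsec}, and work with $D' = \Delta_P$, $\ker D' = \CC[z,P]$.

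The strategy in the fence case is to use the splitting of Proposition~\ref{properties.prop}~iii): since $p$ restricted to $\OO(\AA^3)^{\langle\bu',\e\rangle}\cdot\e$ maps isomorphically onto $\Aut(\AA^2,\Gamma)\cap\Iner(\AA^2,\Gamma')_u$, I first want to show $p(\w)$ lands in this latter group. That $p(\w)$ is unipotent and preserves $\Gamma$ is immediate; that it preserves $\Gamma'$ and is inert on it should follow because $\w$ commutes with $\bu = d\cdot\bu'$, hence $\w^\ast(d)$ is a unit times $d$ (and in fact $\w^\ast(d) = d$ since $\w$ is unipotent, using that $\GGa$ has no nontrivial character), which forces $\w$ to fix $\Gamma' = \div(d/ (\text{unit}))$ appropriately — more precisely $\w^\ast(a') = a'$ and $\w$ is inert on $V(a')$. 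Once $p(\w) \in \Aut(\AA^2,\Gamma)\cap\Iner(\AA^2,\Gamma')_u$, there is a unique $\g_0 \in \OO(\AA^3)^{\langle\bu',\e\rangle}\cdot\e \subseteq N$ with $p(\g_0) = p(\w)$. Then $\w' \defeq \g_0^{-1}\circ\w$ is a unipotent element of $\Cent(\bu)$ with $p(\w') = \bid$, so by Proposition~\ref{firstfam.prop} we get $\w' \in \OO(\AA^3)^{\bu'}\cdot\bu' \subseteq N$, and therefore $\w = \g_0\circ\w' \in N$ since $N$ is a group.

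**The main obstacle** I expect is justifying that $p(\w)$ is genuinely in $\Iner(\AA^2,\Gamma')_u$ — i.e. controlling the behaviour of $\w$ along the second plinth divisor $\Gamma'$ — rather than merely in $\Aut(\AA^2,\Gamma)_u$. The cleanest route is to invoke the property (Sat) from Proposition~\ref{propertyR.prop}/Proposition~\ref{properties.prop}~iv): write $\bu = d\cdot\bu'$; since $\w$ commutes with $\bu = d\cdot\bu'$ and $d$ is $\w$-invariant (being an invariant of $\bu$ up to the character argument), the key computation with the Lie bracket $[F, dD'] = d[F,D'] - D'(d)F$, combined with $\w$ commuting with $D = dD'$, shows $\w$ already commutes with $\bu'$ itself, hence $p(\w)$ preserves $\Gamma'$. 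Then one argues that the restriction of $\w$ to $\pi^{-1}(\AA^2\setminus\Gamma') = \Spec(\CC[z]_{a'}[P,Q])$, expressed in the coordinates $(z, -P/a', Q/a')$ of Remark~\ref{geom.rem}, must be of the form $(u,v,w)\mapsto(u, v+c(u), w + (\text{something}))$ and that inertness on $\Gamma'$ cuts this down to exactly the admissible-complement shape, i.e. $p(\w) \in \Iner(\AA^2,\Gamma')_u$. Modulo this geometric bookkeeping — which is where the real work sits — the semidirect-product decomposition then assembles the conclusion $\Cent(\bu)_u = N$ formally.
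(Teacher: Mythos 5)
Your reduction in the non-fence case is exactly the paper's and is fine, and the overall skeleton in the fence case (lift $p(\w)$ through the section $\OO(\AA^3)^{\langle\bu',\e\rangle}\cdot\e \simeq \Aut(\AA^2,\Gamma)\cap\Iner(\AA^2,\Gamma')_u$ of Proposition~\ref{properties.prop}~iii), then kill the rest by Proposition~\ref{firstfam.prop}) would indeed assemble the theorem \emph{if} you knew $p(\w)\in\Iner(\AA^2,\Gamma')_u$. But that membership is precisely the hard content of the theorem, and your argument does not deliver it. What you can legitimately get is weaker: by Proposition~\ref{propertyR.prop} applied to $\Cent(\w)$ (note your bracket identity is misquoted; the correct one is $[F,dD']=d[F,D']+F(d)D'$, but the (Sat) argument does give $\bu'\in\Cent(\w)$), $\w$ commutes with $\bu'$, hence $p(\w)$ preserves the plinth ideal $(a')$, i.e.\ $p(\w)\in\Aut(\AA^2,\Gamma,\Gamma')_u$. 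In the coordinates $(z,P)$ of subsection~\ref{The_second_subgroup.subsec} this only says $p(\w)\colon (z,P)\mapsto (z,P+c(z))$ for some $c\in\CC[z]$, which fixes $\div(a')$ automatically because $a'\in\CC[z]$. Being \emph{inert} on $\Gamma'$ is the much stronger divisibility $a'\mid c$: fixing the ideal of a divisor does not imply inducing the identity on the subscheme (e.g.\ $(z,P)\mapsto(z,P+1)$ fixes $(a')$ but is not inert). Your sketch at this point invokes ``inertness on $\Gamma'$'' to cut the restriction of $\w$ over $\AA^2\setminus\Gamma'$ down to the admissible-complement shape, which is circular; moreover the obstruction lives over $\Gamma'$ itself, so no bookkeeping on $\pi^{-1}(\AA^2\setminus\Gamma')$ alone can produce it. This divisibility is exactly what separates $p(N)$ (translations $P\mapsto P-h(z)a'$ coming from modifications of $\e$) from all of $\Aut(\AA^2,\Gamma)_u$, and it is the reason $p$ is not surjective in general.

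The paper closes this gap by a completely different, non-constructive argument which you would need (or a genuine proof of $a'\mid c$) to replace: if $\g\in\Cent(\bu)_u$ with $\g\notin N$, then $z\in\OO(\AA^3)^{\g}$, so $\OO(\AA^3)^{\langle\g,\bu\rangle}$ is infinite dimensional; property (Sat) for $N$ (Proposition~\ref{properties.prop}~iv)) forces $\OO(\AA^3)^{\langle\g,\bu\rangle}\cdot\g\cap N=\{\bid\}$, and then the split exact sequence $1\to N\to\Cent(\bu)\to R\to 1$ of Theorem~\ref{main.thm1}, with $R$ algebraic, yields an injective homomorphism of an infinite-dimensional vector group into $R$ --- absurd. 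So the structural input is Theorem~\ref{main.thm1} plus (Sat) for $N$, not a direct identification of $p(\Cent(\bu)_u)$; as it stands, your proposal has a genuine gap at its central step.
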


\begin{proof}
	Let $\bu = d \cdot \bu_0$ be a standard decomposition.
	Let $\g \in \Cent(\bu)$ be a unipotent automorphism with $\g \neq \bid$. 
	If $\Gamma$ is not a fence, then
	$\Aut(\AA^2, \Gamma)$ contains no unipotent
	automorphism $\neq \bid$ (see Proposition~\ref{curves.prop}). 
	By Proposition~\ref{firstfam.prop} it follows that
	$\g \in \OO(\AA^3)^{\bu_0} \cdot \bu_0 = N$.
	
	Hence we can assume that $\Gamma = \div(a)$ is a fence.
	Let $z \in \OO(\AA^3)^{\bu}$ be a variable of $\OO(\CC^3)$
	such that $a \in \CC[z]$ (see Lemma~\ref{fact.lem}).
	If $\OO(\AA^3)^{\g} = \OO(\AA^3)^{\bu}$, then
	$\g$ is a modification of $\bu_0$ and therefore $\g \in N$.  	
	Now, assume $\OO(\AA^3)^{\g} \neq \OO(\AA^3)^{\bu}$.
	Thus $\g$ is not a modification of $\bu_0$ and hence
	$\bid \neq p(\g) \in \Aut(\AA^2, \Gamma)$. Since $p(\g)$ is unipotent and 
	$0 \neq a \in \CC[z]$, it follows that $z \in \OO(\AA^3)^{\g}$ and thus	
	$\OO(\AA^3)^{\langle \g, \bu \rangle}$ is an $\infty$-dimensional
	$\CC$-vector space. 
	By Theorem~\ref{main.thm1} there exists
	an algebraic subgroup $R \subseteq \Cent(\bu)$ 
	and a split short exact sequence of ind-groups
	\[
		1 \to N \hookrightarrow \Cent(\bu) \stackrel{r}{\twoheadrightarrow} 
		R \to 1 \, .
	\]
	If $\g \notin N$, then $\OO(\AA^3)^{\langle \g, \bu \rangle} \cdot \g \cap N = 1$,
	since $N$ satisfies the property (Sat). 
	We get an injection
	$\OO(\AA^3)^{\langle \g, \bu \rangle} \to \Cent(\bu) 
	\twoheadrightarrow R$, $h \mapsto r(h \cdot \g)$.
	Choose any filtration by finite dimensional $\CC$-subspaces
	to turn $\OO(\AA^3)^{\langle \g, \bu \rangle}$
	into an ind-group. It follows that 
	$\OO(\AA^3)^{\langle \g, \bu \rangle} \to R$ is an injective 
	homomorphism of ind-groups.
	But this implies that $R$ has algebraic subgroups
	of arbitrary high dimension, which is absurd. 
	This finishes the proof of the theorem.
\end{proof}

If we endow $\Cent(\bu) / N$ with the algebraic group structure induced by
the semi-direct product decomposition coming from Theorem~\ref{main.thm1}, 
then we get immediately the following
corollary from Theorem~\ref{main.thm2}.

\begin{cor}
	\label{mainthm2.cor}
	If $\bu \in \Aut(\AA^3)$ is unipotent and not a translation,
	then the algebraic group $\Cent(\bu) / N$ consists only of semi-simple
	elements. In particular, the connected component of the neutral element
	in $\Cent(\bu) / N$ is a torus.
\end{cor}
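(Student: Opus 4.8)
The plan is to deduce this directly from Theorems~\ref{main.thm1} and~\ref{main.thm2}. By Theorem~\ref{main.thm1} we may write $\Cent(\bu) \simeq N \rtimes R$ for an algebraic subgroup $R \subseteq \Cent(\bu)$, and the projection onto the second factor identifies $\Cent(\bu)/N$ with $R$ as algebraic groups. Hence it suffices to show that every element of $R$ is semi-simple: the assertion about the connected component then follows, since a connected linear algebraic group all of whose elements are semi-simple is a torus. (Indeed, such a group has trivial unipotent radical, hence is reductive; and a semi-simple group of positive dimension contains nontrivial unipotent elements, e.g.\ in its root subgroups, so the derived group is trivial and the group is a torus.)

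So fix $\g \in R$ and let $\g = \g_s \g_u$ be the Jordan decomposition of $\g$ in the linear algebraic group $R$, with $\g_s$ semi-simple, $\g_u$ unipotent and $\g_s \g_u = \g_u \g_s$. If $\g_u = \bid$, then $\g = \g_s$ is semi-simple and there is nothing to prove. Otherwise, since $\charac \CC = 0$, the unipotent element $\g_u$ lies on a one-parameter unipotent subgroup $\lambda \colon \CC^+ \to R$; composing with the inclusion $R \subseteq \Aut(\AA^3)$, this is an algebraic $\CC^+$-action on $\AA^3$, hence of the form $\lambda(t) = \Exp(tD)$ for some locally nilpotent derivation $D$ of $\OO(\AA^3)$. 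Thus $\g_u = \lambda(1) = \Exp(D)$ is a unipotent automorphism of $\AA^3$ in the sense used throughout the paper, and $\g_u \in R \subseteq \Cent(\bu)$. By Theorem~\ref{main.thm2} we conclude $\g_u \in N$, while $N \cap R = \{\bid\}$ because $\Cent(\bu) = N \rtimes R$. This forces $\g_u = \bid$, contradicting our assumption. Hence $\g = \g_s$ is semi-simple in all cases.

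The only step that is not completely formal is the identification, used above, between a unipotent element of the abstract algebraic group $R$ and a unipotent automorphism of $\AA^3$; this rests on the standard fact that in characteristic zero a nontrivial unipotent element of a linear algebraic group lies on a one-dimensional unipotent subgroup, which for a subgroup of $\Aut(\AA^3)$ is parametrised by the exponential of a locally nilpotent derivation. Everything else is immediate from the two structure theorems, as the remark preceding the statement already indicates.
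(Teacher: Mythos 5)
Your proof is correct and follows essentially the same route as the paper: identify $\Cent(\bu)/N$ with the algebraic subgroup $R$ from Theorem~\ref{main.thm1}, and use Theorem~\ref{main.thm2} to rule out nontrivial unipotent elements, so that every Jordan decomposition in $R$ has trivial unipotent part. The paper leaves these details (one-parameter subgroups, the torus criterion) implicit, but your filling them in matches the intended argument.
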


\section{\texorpdfstring{Applications}{Applications}}

\begin{prop}
	\label{applicationfence.prop}
	Let $\bu \in \Aut(\AA^3)$ be unipotent and irreducible, and assume that
	$\Gamma$ is a non-empty fence. 
	Then $\Gamma \subseteq \AA^3 \aquot \bu$ 
	is the largest closed subscheme fixed by $\Cent(\bu)_u$.
\end{prop}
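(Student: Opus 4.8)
The plan is to show separately that $\Gamma$ is a closed subscheme of $\AA^3\aquot\GGa$ fixed by $\Cent(\bu)_u$ and that it is the largest one; I read "$Y$ is fixed by $\Cent(\bu)_u$" as: for every $\g\in\Cent(\bu)_u$ the induced automorphism $p(\g)$ of $\AA^3\aquot\GGa$ restricts to the identity on $Y$, equivalently $p(\g)^\ast(f)-f$ lies in the ideal of $Y$ for all $f\in\OO(\AA^3\aquot\GGa)$. Both inclusions should come out of the structural results already proved together with the explicit description of an admissible complement.

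First I would note that, since $\Gamma$ is a non-empty fence, $\bu$ is not a translation, so Theorem~\ref{main.thm2} gives $\Cent(\bu)_u=N$. As $\bu$ is irreducible, its standard decomposition $\bu=d\cdot\bu'$ has $d\in\CC^\ast$, hence $D$ and $D'$ differ by a unit, so $\ker D=\ker D'$, $\im D=\im D'$ and the plinth divisors coincide: $\Gamma=\Gamma'$. Proposition~\ref{properties.prop}~iii) then identifies $p(N)=\Aut(\AA^3\aquot\GGa,\Gamma)\cap\Iner(\AA^3\aquot\GGa,\Gamma)_u=\Iner(\AA^3\aquot\GGa,\Gamma)_u$. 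In particular every $p(\g)$ with $\g\in\Cent(\bu)_u=N$ lies in $\Iner(\AA^3\aquot\GGa,\Gamma)$, i.e. preserves $\Gamma$ and restricts to the identity on it, so $\Gamma$ is fixed by $\Cent(\bu)_u$.

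For maximality I would exhibit a single element of $\Cent(\bu)_u$ that already cuts out $\Gamma$. Using Lemma~\ref{fact.lem}, pick a variable $z$ of $\OO(\AA^3)$, which is also a coordinate of $\AA^3\aquot\GGa=\Spec\OO(\AA^3)^\bu$, with $a\in\CC[z]$; after rescaling we may write $D=\Delta_P$ and $\OO(\AA^3)^\bu=\ker D=\CC[z,P]$, so $\Gamma=V(a)$ with $a=a(z)$. Let $\e=\Exp(\Delta_Q)$ be an admissible complement, so $D(Q)=a$ and $\e\in\Cent(\bu)_u$ by Lemma~\ref{AdmissibleCompl.lem}. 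Since $\Delta_Q(z)=0$, $\Delta_Q(P)=-\Delta_P(Q)=-a\in\CC[z]$ and $\Delta_Q(a)=0$, the exponential series terminates and $p(\e)$ is the automorphism $(z,P)\mapsto(z,P-a(z))$ of $\AA^2$ (in agreement with Remark~\ref{geom.rem}). Hence the largest closed subscheme on which $p(\e)$ is the identity has ideal generated by $(P-a(z))-P=-a(z)$, namely $V(a(z))=\Gamma$.

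Putting the two parts together: if $Y=V(I)\subseteq\AA^3\aquot\GGa$ is fixed by $\Cent(\bu)_u$, then in particular $p(\e)^\ast(P)-P=-a(z)\in I$, so $Y\subseteq\Gamma$; and $\Gamma$ itself is fixed by $\Cent(\bu)_u$. Therefore $\Gamma$ is the largest closed subscheme fixed by $\Cent(\bu)_u$. The argument is short because everything of substance is imported, so the only thing to watch is the scheme-theoretic bookkeeping: fixing the reading of "fixed", checking that $I(\Gamma)=(a(z))$ exactly, and making sure that the single shear $p(\e)$ recovers $\Gamma$ with its possibly non-reduced structure rather than merely $\Gamma_{\red}$ — I would spell that reduction out at the very start so the rest is a two-line deduction.
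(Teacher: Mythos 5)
Your proof is correct and follows essentially the same route as the paper: both reduce to $p(\Cent(\bu)_u)=\Iner(\AA^2,\Gamma)_u$ via Proposition~\ref{properties.prop} and Theorem~\ref{main.thm2}, and both cut out $\Gamma$ with the single shear $P\mapsto P\pm a(z)$, which you realize explicitly as $p(\e)$ for an admissible complement while the paper simply picks it inside $\Iner(\AA^2,\Gamma)_u$ using surjectivity of $p$. The scheme-theoretic bookkeeping you flag (that the ideal of the fixed subscheme of this shear is exactly $(a)$, not just its radical) is handled correctly.
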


\begin{proof}
	By Proposition~\ref{properties.prop} and Theorem~\ref{main.thm2},
	we get $p(\Cent(\bu)_u) = \Iner(\CC^2, \Gamma)_u$, where 
	$p \colon \Cent(\bu) \to \Aut(\CC^2, \Gamma)$ is the canonical morphism.
	Thus $\Gamma$ is fixed by the action of $\Cent(\bu)_u$. Let
	$X \subseteq \CC^2$ be a closed subscheme that is fixed under 
	$\Cent(\bu)_u$ and assume that $X$ contains $\Gamma$. Moreover, let
	$I(X) \subseteq \OO(\CC^2)$ be the ideal of $X$ and let
	$\Gamma = \div(a)$. By Proposition~\ref{genAMS.prop} there exist
	coordinates $(z, w)$ of $\CC^2$ such that $a \in \CC[z]$.
	Let $\sigma \in \Iner(\CC^2, \Gamma)_u$ be given by 
	$\sigma(z, w) = (z, w + a)$.
	By assumption, we get $a = \sigma^\ast(w)-w \in I(X)$. But this implies
	that $X$ is a closed subscheme of $\Gamma$ and hence $X = \Gamma$.
\end{proof}

\begin{prop}
	\label{char.prop}
	Let $\bid \neq \bu \in \Aut(\AA^3)$ be unipotent, not a translation, and let 
	$\bu = d \cdot \bu_0$ be a standard decomposition. Then the subgroup 
	$\OO(\AA^3)^{\bu_0} \cdot \bu_0$ of $\Cent(\bu)$ is characteristic, i.e.
	$\OO(\AA^3)^{\bu_0} \cdot \bu_0$ is invariant under all abstract group
	automorphisms of $\Cent(\bu)$.
\end{prop}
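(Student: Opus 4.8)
The plan is to identify $\OO(\AA^3)^{\bu'}\cdot\bu'$ as the \emph{largest abelian normal subgroup} of $\Cent(\bu)$; since every abstract group automorphism of $\Cent(\bu)$ carries abelian normal subgroups to abelian normal subgroups, it must fix this distinguished one, which is exactly the assertion. Write $U_0 = \OO(\AA^3)^{\bu'}\cdot\bu'$ and $\bu' = \Exp(D')$. That $U_0$ is abelian is immediate, since for $f,g \in \ker D'$ the derivations $fD'$ and $gD'$ commute, whence $\Exp(fD')\Exp(gD') = \Exp((f+g)D')$; that $U_0$ is normal in $\Cent(\bu)$ is part of Proposition~\ref{firstfam.prop}, which exhibits $U_0$ as $\ker p$.

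The computational heart is a one-line fact: \emph{a ring endomorphism $\rho$ of an integral domain of characteristic zero with $(\rho-\id)^2 = 0$ equals $\id$.} Indeed, writing $\delta = \rho-\id$ one computes $\delta(f^2) = 2f\,\delta f + (\delta f)^2$ and then $\delta^2(f^2) = 2(\delta f)^2$, so $\delta^2 = 0$ forces $\delta f = 0$ for all $f$. I would apply this through the conjugation action of $\Cent(\bu)$ on $U_0 \cong (\OO(\AA^3)^{\bu'},+)$: by Remark~\ref{ActionByConj.rem} the conjugation by $\g \in \Cent(\bu)$ is the $\CC$-linear operator $\rho_{\g}\colon f \mapsto \mu(\g)^{-1}\,(p(\g)^{-1})^{\ast}(f)$ on $\OO(\AA^3)^{\bu'} = \OO(\AA^3\aquot\GGa) \cong \CC[y,z]$. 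Evaluating at the constant $1$ gives $\rho_{\g}(1) = \mu(\g)^{-1}$, so as soon as we know $(\rho_{\g}-\id)^2 = 0$ we first read off $(\mu(\g)^{-1}-1)^2 = 0$, i.e.\ $\mu(\g) = 1$; after that $\rho_{\g} = (p(\g)^{-1})^{\ast}$ is an honest ring automorphism, the one-line fact forces $\rho_{\g} = \id$, hence $p(\g) = \id$, hence $\g \in \ker p = U_0$.

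It remains to feed ``abelian and normal'' into this machine. Let $A \trianglelefteq \Cent(\bu)$ be abelian and $\g \in A$. For any $u \in U_0$ the commutator $[\g,u]$ lies in $A \cap U_0$ (because $A$ and $U_0$ are both normal), and $\g$ commutes with $[\g,u]$ because $A$ is abelian; transporting this through the isomorphism $U_0 \cong (\OO(\AA^3)^{\bu'},+)$ turns ``$[\g,u] \in U_0$'' into ``$(\rho_{\g}-\id)(f_u) \in U_0$'' and ``$\g$ fixes $[\g,u]$'' into $(\rho_{\g}-\id)^2(f_u) = 0$, for every $f_u$. So $(\rho_{\g}-\id)^2 = 0$, and by the previous paragraph $\g \in U_0$. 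Thus $A \subseteq U_0$, so $U_0$ is the largest abelian normal subgroup of $\Cent(\bu)$, and in particular it is invariant under every abstract automorphism of $\Cent(\bu)$.

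The only real subtlety I expect lies in this last reduction: recognizing that the two elementary facts ``$[\g,u] \in A$'' and ``$[\g,[\g,u]] = \bid$'' together say precisely $(\rho_{\g}-\id)^2 = 0$, and that this single identity already pins $\g$ down modulo $U_0$. A small but essential point is that $\rho_{\g}$ is only a \emph{scalar multiple} of a ring homomorphism, which is why one must first test on the constant $1$ to force $\mu(\g) = 1$ before the ring-endomorphism fact can be invoked; with the explicit conjugation formula of Remark~\ref{ActionByConj.rem} and $\ker p = U_0$ from Proposition~\ref{firstfam.prop} in hand, everything else is formal.
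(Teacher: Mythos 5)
Your proof is correct, and it takes a genuinely different route from the paper. You characterize $\OO(\AA^3)^{\bu'} \cdot \bu'$ purely group-theoretically as the unique maximal abelian normal subgroup of $\Cent(\bu)$: the only inputs are Proposition~\ref{firstfam.prop} (which gives $\OO(\AA^3)^{\bu'} \cdot \bu' = \ker p$, hence normality and the identification of $p(\g)=\id$ with membership), Remark~\ref{ActionByConj.rem} (conjugation on $U_0 \cong (\OO(\AA^3)^{\bu'},+)$ is a scalar times a ring automorphism), and the elementary fact that a ring endomorphism $\rho$ of a characteristic-zero domain with $(\rho-\id)^2=0$ is the identity; the passage from ``$A$ abelian normal'' to $(\rho_{\g}-\id)^2=0$ via the two commutators $[\g,u]$ and $[\g,[\g,u]]$ is sound, and your preliminary test on the constant $1$ correctly disposes of the character $\mu$. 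The paper argues quite differently: it invokes the structure theorem $\Cent(\bu) \simeq N \rtimes R$ (Theorem~\ref{main.thm1}) and Corollary~\ref{mainthm2.cor}, first shows $T \ltimes N$ (with $T=R^0$ a torus) is characteristic by a finite-index argument, and then identifies $H = \OO(\AA^3)^{\bu'}\cdot\bu'$ inside $G = T \ltimes N$ as $\Cent_G(G^{(1)})$ or $\Cent_G(G^{(2)})$, with a case analysis ($\Gamma$ a fence or not, $\dim T$ zero or positive) resting on explicit commutator computations, Lemma~\ref{DiagonalAction.lem} and Lemma~\ref{pres.lem}. Your argument is shorter, avoids the structure theorems entirely, and does not actually use the hypothesis that $\bu$ is not a translation, so it proves a slightly more general statement; the paper's approach, in exchange, extracts finer structural information (explicit descriptions of derived subgroups and centralizers inside $T \ltimes N$) that is of independent interest.
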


\begin{lem}
	\label{DiagonalAction.lem}
	Let $T$ be a torus acting on $\CC^2$. Assume that there exist coordinates
	$(z, w)$ of $\CC^2$ such that $z$ is a semi-invariant. Then there exists
	$r \in \CC[z]$ such that the action of $T$
	is diagonal with respect to the coordinate system $(z, w+r)$.
\end{lem}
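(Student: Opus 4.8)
The plan is to first determine the shape of the $T$-action on the coordinate $w$, and then use that a torus is linearly reductive to absorb the off-diagonal part into a change of the second coordinate. Write $\bt^{\ast} z = \chi(\bt)\, z$ for the character $\chi$ of $T$ witnessing that $z$ is a semi-invariant. The first step is to show that $\bt^{\ast} w = \psi(\bt)\, w + g_{\bt}(z)$ for a character $\psi$ of $T$ and polynomials $g_{\bt} \in \CC[z]$. Indeed, each $\bt \in T$ acts on $\CC^2$ as an automorphism, so $(\bt^{\ast} z, \bt^{\ast} w)$ is a coordinate system and its Jacobian determinant $\chi(\bt) \cdot \partial (\bt^{\ast} w)/\partial w$ is a nonzero constant; hence $\bt^{\ast} w$ is affine-linear in $w$, say $\bt^{\ast} w = \psi(\bt) w + g_{\bt}(z)$ with $\psi(\bt) \in \CC^{\ast}$ and $g_{\bt} \in \CC[z]$. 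Comparing the coefficients of $w$ in $(\bt_1 \bt_2)^{\ast} w = \bt_2^{\ast}(\bt_1^{\ast} w)$ gives $\psi(\bt_1 \bt_2) = \psi(\bt_1)\psi(\bt_2)$, and $\psi$ is regular on $T$, so it is a character.

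Next, set $M = \CC[z] \oplus \CC[z]\,w \subseteq \OO(\CC^2)$. By the formula for $\bt^{\ast} w$ this subspace is $T$-stable, and it is a rational $T$-module (it is the increasing union of the finite-dimensional $T$-stable subspaces $M \cap \OO(\CC^2)_{\le d}$). It contains the $T$-submodule $\CC[z]$, and the class $\bar w$ of $w$ in the quotient $M/\CC[z]$ is a weight vector of weight $\psi$, since $\bt^{\ast} w \equiv \psi(\bt) w \pmod{\CC[z]}$. As $T$ is a torus, every rational $T$-module is the direct sum of its weight spaces, so the short exact sequence $0 \to \CC[z] \to M \to M/\CC[z] \to 0$ of rational $T$-modules splits $T$-equivariantly. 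Choosing a $T$-stable complement $M'$ of $\CC[z]$ in $M$, the unique element of $M'$ mapping to $\bar w$ has the form $w + r$ with $r \in \CC[z]$, and being the lift of the weight vector $\bar w$ through a $T$-isomorphism it is itself a weight vector of weight $\psi$; that is, $\bt^{\ast}(w + r) = \psi(\bt)(w + r)$ for all $\bt \in T$.

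Finally, $(z, w) \mapsto (z, w + r)$ is a polynomial automorphism of $\CC^2$ (with inverse $(z,w)\mapsto(z,w-r)$), so $(z, w+r)$ is again a coordinate system; in it the action is $\bt^{\ast} z = \chi(\bt) z$ and $\bt^{\ast}(w+r) = \psi(\bt)(w+r)$, i.e.\ diagonal, as claimed. I do not expect a genuine obstacle here: the argument rests entirely on the two standard facts used above — that the automorphism condition forces $\bt^{\ast} w$ to be affine-linear in $w$ (Jacobian criterion), and that rational modules over a torus are completely reducible. The only points needing a line of verification are that $M$ is honestly a rational $T$-module so that reducibility applies, and that $\psi$ is a bona fide character; both are immediate. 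Alternatively one could phrase the splitting step as the vanishing of $H^1(T,\CC[z])$ applied to the $1$-cocycle $\bt \mapsto g_{\bt}$, but the weight-space formulation above seems the most transparent.
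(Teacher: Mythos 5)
Your proof is correct, but it takes a genuinely different route from the paper. The paper settles the lemma in three lines: the projection $(z,w)\mapsto z$ is $T$-equivariant for a suitable $T$-action on $\AAone$, and by \cite[Proposition~1]{KrKu1996Equivariant-affine} every lift of a $T$-action on $\AAone$ to $\AA^2$ along this projection is equivalent to the trivial lift, which immediately yields $r\in\CC[z]$ with $w+r$ a semi-invariant. You instead reprove this special case by hand: the Jacobian criterion forces the triangular form $\bt^{\ast}w=\psi(\bt)w+g_{\bt}(z)$ with $\psi$ a character, and complete reducibility of rational torus modules (equivalently, splitting the cocycle $\bt\mapsto g_{\bt}$, i.e. vanishing of the relevant $H^1$) absorbs the additive part into the second coordinate. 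Your argument buys self-containedness --- it avoids the linearization theorem for equivariant affine line bundles --- at the cost of length; the paper's proof is shorter but leans on the cited result.

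One justification in your write-up is wrong, although the fact it is meant to support is true and standard: the subspaces $M\cap\OO(\CC^2)_{\le d}$ need \emph{not} be $T$-stable, because the action need not preserve the degree filtration. For instance, $\bt^{\ast}z=\chi(\bt)z$, $\bt^{\ast}w=w+(\chi(\bt)^5-1)z^5$ defines a $\CC^{\ast}$-action with $z$ a semi-invariant, and for it no $M\cap\OO(\CC^2)_{\le d}$ with $d\ge 1$ is stable (the image of $z^{d-1}w$ has degree $d+4$). To see that $M$ is a rational $T$-module, invoke instead the standard fact that an algebraic group action on an affine variety makes the coordinate ring a locally finite (rational) module, and that a $T$-stable subspace of a rational module is rational; with that replacement your weight-space argument goes through unchanged.
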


\begin{proof}[Proof of Lemma~\ref{DiagonalAction.lem}]
	By assumption, $\pr \colon \AA^2 \to \AAone$, $(z, w) \to z$ 
	is $T$-equivariant  with respect to a suitable $T$-action on $\AAone$. Due to 
	\cite[Proposition~1]{KrKu1996Equivariant-affine}, every lift
	of a $T$-action on $\AAone$ to $\AA^2$ (with respect to $\pr$) 
	is equivalent to a trivial lift (with respect to $\pr$). Thus, there
	exists $r \in \CC[z]$ such that $w + r$ is a semi-invariant with respect
	to the action of $T$. This finishes the proof.
\end{proof}

\begin{proof}[Proof of Proposition~\ref{char.prop}]
Let $R \subseteq \Cent(\bu)$ be an algebraic subgroup such that
$\Cent(\bu) = R \ltimes N$ (see Theorem~\ref{main.thm1}).
Let $T = R^0$ be the connected component of the neutral element in $R$.
By Corollary~\ref{mainthm2.cor}, it is a torus (possibly $\dim T = 0$).
It follows from Theorem~\ref{main.thm1} that 
$T \ltimes N \subseteq \Cent(\bu)$ is a subgroup of finite index.
Moreover, $T \ltimes N$ has no proper subgroup of finite index, as this group
is generated by groups that have no proper subgroup of finite index. 
This implies that $T \ltimes N$ is a characteristic subgroup of $\Cent(\bu)$. 
Let $G = T \ltimes N$
and let $H = \OO(\AA^3)^{\bu_0} \cdot \bu_0 \subseteq N$. 
It is now enough to prove, that $H$ is characteristic in $G$.
We divide the proof now in two cases.

\textbf{$\Gamma$ is not a fence:}
	If $\dim T = 0$, then we have $H = G$. So assume $\dim T > 0$.
	There exist coordinates $(v_1, v_2)$ of $\CC^3 \aquot \bu$
	such that the action of $T$ on $\CC^3 \aquot \bu$ is diagonal
	with respect to $(v_1, v_2)$ (see \cite{Ka1979Automorphism-group}). 
	Let $\rho_1$ and $\rho_2$ be the
	characters of $T$ such that $\bt^\ast(v_i) = \rho_i(\bt) v_i$ 
	for all $\bt \in T$. Let $\bt \circ f \cdot \bu_0 \in \Cent_G(G^{(1)})$, where
	$G^{(1)} = [G, G]$ denotes the first derived group.
	A calculation shows for $i =1,2$ and for all $k \geq 0$
	\[
		\bid = [\bt \circ f \cdot \bu_0, [\bt^{-1}, v_i^k \cdot \bu_0]] 
		= (1-\mu(\bt^{-1})\rho_i(\bt^{-1})^k)(1-\mu(\bt)\rho_i(\bt)^k)v_i^k 
		\cdot \bu_0  
	\]
	where $\mu \colon T \to \CC^\ast$ is the character defined
	by $\mu(\bt) d = \bt^{\ast}(d)$ (see Remark~\ref{ActionByConj.rem}).
	Thus $\rho_i(\bt) = 1$ for $i =1,2$.
	As the action of $T$ on $\CC^3 \aquot \bu$ 
	is faithful, it follows that $\bt = 1$. Hence $H = \Cent_G(G^{(1)})$.

\textbf{$\Gamma$ is a fence:}
	Let $(z, P)$ be a coordinate system of $\AA^2 = \AA^3 \aquot \bu$
	such that $\Gamma \subseteq \AA^2$ is given by the standard embedding
	$F \times \AAone \subseteq \AA^2$ for some 0-dimensional 
	closed subscheme $F \subseteq \AAone$ and
	$\bu_0 = \Exp(\Delta_P)$ (see subsec.~\ref{The_second_subgroup.subsec}).
	As the torus $T$ preserves $\Gamma = V(a) \subseteq \CC^2$ 
	invariant and since $a \in \CC[z]$, there exists $q \in \CC$, such
	that $z+q$ is a semi-invariant for the action of $T$. By replacing 
	$z+q$ with $z$, we can assume that $z$ is a semi-invariant.
	Moreover, by replacing $P$ with a suitable $P+r$ for some $r \in \CC[z]$
	we can assume that the action of $T$ with respect to $(z, P)$ is diagonal 
	(see Lemma~\ref{DiagonalAction.lem}).
	Moreover, we denote by $\e$ an admissible complement to $\bu$.

	Assume first $\dim T = 0$. Let
	$h \cdot \e \circ f \cdot \bu_0 \in \Cent_G(G^{(1)})$. 
	A calculation shows that 
	\[
		\bid = 
		[h \cdot \e \circ f \cdot \bu_0, [P^2 \cdot \bu_0, \e]] = 
		-2h(a_0)^2 \cdot \bu_0 
	\] 
	where $a_0 \in \CC[z]$ such that $\Gamma_0 = \div(a_0)$.
	Hence $h = 0$ and therefore $H = \Cent_G(G^{(1)})$.
	
	Assume now, $\dim T > 0$. Let $A = \CC[z]$ and let $A \ltimes A[P]$ be the 
	semi-direct product defined by
	\begin{equation}
		\label{prod.eq}
		(h, f) \cdot (\bar{h}, \bar{f}) = (h + \bar{h}, f(P - \bar{h}a_0) + \bar{f})
	\end{equation}
	From Proposition~\ref{properties.prop} it follows that
	\[
		A \ltimes A[P] \stackrel{\sim}{\longrightarrow} N(\bu) \, , \quad
		(h, f) \longmapsto h \cdot \e \circ f \cdot \bu_0 
	\]
	is an isomorphism of groups. Under this isomorphism
	the subgroup $A[P]$ is sent onto $H$.
	It follows from Lemma~\ref{pres.lem} that $H = \Cent_G G^{(2)}$.
\end{proof}

\begin{lem}
	\label{pres.lem}
	Let $A \ltimes A[P]$ be defined as in \eqref{prod.eq} and let 
	$G = T \ltimes (A \ltimes A[P])$ where $T$ is a
	torus with $\dim T > 0$. Assume that 
	$A[P] \subseteq G$ is a normal subgroup, that
	the action of $T$ by conjugation on $A[P]$ is given by 
	$\lambda^{-1} f \lambda =  
	\mu(\lambda)f(\rho_1(\lambda)z, \rho_2(\lambda)P)$ 
	for some characters $\mu$, $\rho_1$, $\rho_2$ and that 
	$\ker \rho_1 \cap \ker \rho_2 = \{ 1 \}$.
	Furthermore we assume that the action
	of $T$ by conjugation on the quotient $G / A[P]$ 
	is non-trivial and the product in $G$ satisfies
	$(\lambda, 0, 0) \cdot (1, h, f) = (\lambda, h, f)$.
	Then $A[P] = \Cent_G G^{(2)}$.
\end{lem}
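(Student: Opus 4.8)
The plan is to prove the two inclusions $A[P] \subseteq \Cent_G G^{(2)}$ and $\Cent_G G^{(2)} \subseteq A[P]$. For the first one I would observe that $G^{(2)} \subseteq A[P]$: since $G / (A \ltimes A[P]) \simeq T$ is abelian we have $G^{(1)} \subseteq A \ltimes A[P]$, and since $(A \ltimes A[P]) / A[P] \simeq A$ is abelian we get $G^{(2)} = [G^{(1)}, G^{(1)}] \subseteq [A \ltimes A[P], A \ltimes A[P]] \subseteq A[P]$. As $A[P]$ is abelian, it centralises $G^{(2)}$, which gives the first inclusion.

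The heart of the proof is to exhibit enough explicit elements of $G^{(2)}$. First, because $T$ acts non-trivially on $G / A[P] \simeq T \ltimes A$, the commutator $[T, A]$ is non-trivial, so there is a nonzero $h_0 \in A$ lying in the image of $G^{(1)}$ in $G/A[P]$; lift it to an element $(h_0, g_0) \in G^{(1)} \cap (A \ltimes A[P])$. A short computation with the product \eqref{prod.eq} shows that for $h \in A$ and $f \in A[P]$ one has $[(h, 0),(0, f)] = (0,\, f(P + h a') - f(P)) \in G^{(1)} \cap A[P]$; taking $h = z^j$ and $f = P^{m+1}$ gives elements $\phi_{m,j} := (P + z^j a')^{m+1} - P^{m+1} \in G^{(1)}$, of $P$-degree $m$ with $P$-leading coefficient $(m+1) z^j a'$. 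Next, conjugation by $(h_0, g_0)$ sends $(0, \phi) \in A[P]$ to $(0, \phi(P + h_0 a'))$ (independently of $g_0$), so $\psi_{m,j} := [(h_0, g_0),(0,\phi_{m,j})] = (0,\, \phi_{m,j}(P + h_0 a') - \phi_{m,j}(P))$ lies in $G^{(2)}$, and a direct computation shows that for $m \geq 1$ this polynomial has $P$-degree $m - 1$ and $P$-leading coefficient a nonzero scalar multiple of $h_0 z^j {a'}^2$. Letting $m$ and $j$ range, it follows that for every $d \geq 0$ and every sufficiently large $i$ there is an element of $G^{(2)}$ whose coefficient of $z^i P^d$ in its top-$P$-degree part is nonzero.

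It remains to run the centraliser argument. Let $g \in \Cent_G G^{(2)}$; using the hypothesis $(\lambda, 0, 0)(1, h, f) = (\lambda, h, f)$, write $g = (\lambda, 0, 0)(1, h, 0)(1, 0, f)$. Conjugation by $(1, 0, f)$ is trivial on $A[P]$; conjugation by $(1, h, 0)$ sends $v(z, P) \in A[P]$ to $v(z, P + h(z) a'(z))$; and conjugation by $(\lambda, 0, 0)$ sends $v(z, P)$ to $\mu(\lambda)^{-1} v(\rho_1(\lambda)^{-1} z, \rho_2(\lambda)^{-1} P)$. Imposing $g v g^{-1} = v$ for $v = \psi_{m,j}$ and comparing the coefficient of each monomial $z^i P^d$ occurring in the top-$P$-degree part, one obtains $\mu(\lambda) \rho_1(\lambda)^i \rho_2(\lambda)^d = 1$. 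By the previous paragraph these relations hold for $d = 0$ and all large $i$, so varying $i$ forces $\rho_1(\lambda) = 1$ and then $\mu(\lambda) = 1$; taking $d = 1$ afterwards gives $\rho_2(\lambda) = 1$, whence $\lambda \in \ker \rho_1 \cap \ker \rho_2 = \{1\}$, i.e. $\lambda = 1$. With $\lambda = 1$ the relation $g v g^{-1} = v$ reads $v(z, P) = v(z, P + h(z) a'(z))$; applied to a $\psi_{m,j}$ of positive $P$-degree this forces $h \cdot a' = 0$, hence $h = 0$ (note $a' \neq 0$). Therefore $g = (1, 0, f) \in A[P]$, completing the argument.

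I expect the main obstacle to be the computational bookkeeping in the second paragraph: describing the leading terms of the iterated commutators $\psi_{m,j}$ precisely enough, and verifying the relevant coefficients do not vanish, so that the weight relations in the last step genuinely pin down both $\lambda$ and $h$. The remaining steps are routine manipulations in the semidirect product defined by \eqref{prod.eq}.
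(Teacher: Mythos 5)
Your proof is correct, and its overall skeleton matches the paper's: both establish $G^{(1)}\subseteq A\ltimes A[P]$ and $G^{(2)}\subseteq A[P]$ (giving $A[P]\subseteq \Cent_G G^{(2)}$ since $A[P]$ is abelian), both extract from the non-trivial $T$-action on $G/A[P]$ an element of $G^{(1)}$ with non-zero $A$-component $h_0$, and both then manufacture explicit elements of $G^{(2)}$ by commutating elements of $G^{(1)}\cap A[P]$ against that element and feed them into the centralizer condition. The genuine difference is where those elements of $G^{(1)}\cap A[P]$ come from and how the final equations are solved. The paper puts the monomials $z^iP^j$ themselves into $G^{(1)}$ by commutating with well-chosen torus elements, which requires the bookkeeping that $\mu\rho_1^i\rho_2^j$ be a non-trivial character (true for $i$ large), and then solves the three resulting centralizer equations explicitly, picking up intermediate constraints such as $\bar h=((\mu(\bar\lambda)-1)/2)h_0$ before everything collapses. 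You instead obtain elements of $G^{(1)}\cap A[P]$ purely from commutators inside $A\ltimes A[P]$, namely $[(z^j,0),(0,P^{m+1})]=(0,(P+z^ja')^{m+1}-P^{m+1})$, so no character non-vanishing is needed at that stage; the second commutator with $(h_0,g_0)$ then yields the elements $\psi_{m,j}\in G^{(2)}$ with top-$P$-coefficient $m(m+1)z^jh_0(a')^2$, and your leading-coefficient (weight) argument cleanly decouples the two unknowns: the relations $\mu(\lambda)\rho_1(\lambda)^i\rho_2(\lambda)^d=1$ (valid regardless of $h$, since the shift $P\mapsto P+ha'$ does not touch the top $P$-degree) first force $\lambda=1$, and only then does the shift condition force $h=0$. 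Your computations check out (the product law \eqref{prod.eq} indeed gives the conjugation formulas you use, and the non-vanishing of the leading coefficients uses only $h_0\neq 0$ and $a'\neq 0$, which is part of the setup of \eqref{prod.eq}); the trade-off is that your route needs the slightly more careful tracking of leading terms, while the paper's route trades that for the character-non-triviality argument and fully explicit centralizer equations.
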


\begin{proof}[Proof of Lemma~\ref{pres.lem}]
	As the action by conjuagtion of $T$ on $G / A[P]$ is non-trivial, it follows
	that the first derived subgroup $G^{(1)}$ is not contained in 
	$A[P]$. As $T$ is abelian it follows
	that $G^{(1)} \subseteq A \ltimes A[P]$ and as $A$ is abelian
	we conclude $G^{(2)} \subseteq A[P]$.
	Thus there exists $(1, h_0, f_0) \in G^{(1)}$ with $h_0 \neq 0$. 
	As $A[P]$ is abelian, we get $A[P] \subseteq \Cent_G G^{(2)}$. 
	Now, we prove $\Cent_G G^{(2)} \subseteq A[P]$. We have
	\[
		(1, 0, q - q(P + h_0a_0)) = [(1, 0, q), (1, h_0, f_0)] \in G^{(2)}
		\quad \textrm{for all $(1, 0, q) \in G^{(1)}$} \, .
	\]
	Moreover, $(1, 0, z^i P^j) \in G^{(1)}$
	for all $(i, j) \in \NN_0^2$ such that $\mu \rho_1^i \rho_2^j$ is not the trivial
	character, as we have $(1, 0, z^i P^j) = [(\lambda, 0, 0), (1, 0, z^i P^j)]$ 
	for some well chosen $\lambda \in T$. For all $j \geq 0$, 
	the character $\mu \rho_1^i \rho_2^j$ is non-trivial,
	provided that $i$ is large enough. For all $(1, 0, f) \in A[P]$ we have
	\begin{eqnarray*}
		\Cent_G(1, 0, f) & = &
		\{ \, (\bar{\lambda}, \bar{h}, \bar{f}) \in G \ | \ 
		      (\bar{\lambda}, \bar{h}, \bar{f})^{-1} \cdot (1, 0, f) \cdot 
		      (\bar{\lambda}, \bar{h}, \bar{f}) = (1, 0, f)  \, \} \\ 
		&=& \{ \, (\bar{\lambda}, \bar{h}, \bar{f}) \in G \ | \ f =
		(\bar{\lambda}^{-1}f\bar{\lambda})(P - \bar{h}a_0) \, \}
	\end{eqnarray*}
	Let $(\bar{\lambda}, \bar{h}, \bar{f}) \in \Cent_G G^{(2)}$. 
	Since 
	$(\bar{\lambda}, \bar{h}, \bar{f}) \in \Cent_G(1, 0, -z^i h_0 a_0)$ 
	for $i$ sufficiently large, we get $\bar{\lambda} \in \ker \rho_1$.
	Moreover, 
	$(\bar{\lambda}, \bar{h}, \bar{f}) \in 
	\Cent_G(1, 0, -z^i h_0 a_0 (2P + h_0 a_0))$
	for sufficiently large $i$. This implies
	$\rho_2(\bar{\lambda}) = \mu(\bar{\lambda})^{-1}$ and 
	$\bar{h} = ((\mu(\bar{\lambda})-1)/2)h_0$. Since 
	$(\bar{\lambda}, \bar{h}, \bar{f}) \in 
	\Cent_G(1, 0, -z^i h_0 a_0 (3P^2+3Ph_0 a_0+(h_0 a_0)^2))$ it follows that
	$\rho_2(\bar{\lambda})^2 = \mu(\bar{\lambda})^{-1}$.
	Therefore $\bar{\lambda} \in \ker \rho_2$, $\bar{h} = 0$. Hence we have
	$(\bar{\lambda}, \bar{h}, \bar{f}) = (1, 0, \bar{f}) \in A[P]$ and this proves 
	$\Cent_G G^{(2)} \subseteq A[P]$.
\end{proof}

\section{Acknowledgements}

I would like to thank \name{Hanspeter Kraft} for many fruitful discussions.
Many thanks go also to the referee for his helpful comments.

\vskip 0.5cm

\bibliographystyle{amsalpha}

\providecommand{\bysame}{\leavevmode\hbox to3em{\hrulefill}\thinspace}
\providecommand{\MR}{\relax\ifhmode\unskip\space\fi MR }
\providecommand{\MRhref}[2]{%
  \href{http://www.ams.org/mathscinet-getitem?mr=#1}{#2}
}
\providecommand{\href}[2]{#2}

\vskip 0.5cm

\end{document}